\DeclareMathOperator{\Order}{{\mathcal O}}
\newtheorem{defn}{Definition}
\newcommand{\bm}[1]{\boldsymbol{#1}}
\newcommand{\mSigma}{\mathsf{\Sigma}}
\newcommand{\mB}{\mathsf{B}}
\newcommand{\dif}[1]{\text{d}{#1}}
\newcommand{\D}[1]{\text{d}{#1}}
\newcommand{\trace}[1]{\textup{trace}{#1}}
\newcommand{\naturals}{\mathbb{N}}
\newcommand{\natzero}{\mathbb{N}_0}
\newcommand{\reals}{\mathbb{R}}
\newcommand{\integers}{\mathbb{Z}}
\newcommand{\Ex}{\mathbb{E}}
\newcommand{\rC}{\mathring{C}}
\newcommand{\rlambda}{\mathring{\lambda}}
\newcommand{\vbeta}{{\bm{\beta}}}
\newcommand{\vDelta}{{\boldsymbol{\Delta}}}
\newcommand{\veta}{{\bm{\eta}}}
\newcommand{\vlambda}{{\bm{\lambda}}}
\newcommand{\vtheta}{{\bm{\theta}}}
\newcommand{\vzeta}{{\bm{\zeta}}}
\newcommand{\va}{\bm{a}}
\newcommand{\vA}{\bm{A}}
\newcommand{\vb}{\bm{b}}
\newcommand{\vc}{\bm{c}}
\newcommand{\vC}{\bm{C}}
\newcommand{\vg}{\bm{g}}
\newcommand{\vh}{\bm{h}}
\newcommand{\vf}{\bm{f}}
\newcommand{\vk}{\bm{k}}
\newcommand{\vl}{\bm{l}}
\newcommand{\vm}{\bm{m}}
\newcommand{\vt}{\bm{t}}
\newcommand{\vv}{\bm{v}}
\newcommand{\vV}{\bm{V}}
\newcommand{\vw}{\bm{w}}
\newcommand{\vx}{\bm{x}}
\newcommand{\dx}{\dif{{x}}}
\newcommand{\dt}{\dif{{t}}}
\newcommand{\dvx}{\dif{\bm{x}}}
\newcommand{\dvt}{\dif{\bm{t}}}
\newcommand{\vrho}{\bm{\rho}}
\newcommand{\vy}{\bm{y}}
\newcommand{\vY}{\bm{Y}}
\newcommand{\vz}{\bm{z}}
\newcommand{\vZ}{\bm{Z}}
\newcommand{\dvz}{\dif{\bm{z}}}
\newcommand{\vPsi}{\boldsymbol{\Psi}}
\newcommand{\tvy}{\tilde{\bm{y}}}
\newcommand{\vone}{\bm{1}}
\newcommand{\mA}{\mathsf{A}}
\newcommand{\mC}{\mathsf{C}}
\newcommand{\mG}{\mathsf{G}}
\newcommand{\mH}{\mathsf{H}}
\newcommand{\mK}{\mathsf{K}}
\newcommand{\mP}{\mathsf{P}}
\newcommand{\rmC}{\mathring{\mathsf{C}}}
\newcommand{\mCtheta}{{\mathsf{C}_{\vtheta}}}
\newcommand{\mCthetaInv}{{\mathsf{C}^{-1}_{\vtheta}}}
\newcommand{\mCInv}{{\mathsf{C}^{-1}}}
\newcommand{\cov}{{\textup{cov}}}
\newcommand{\var}{{\textup{var}}}
\newcommand{\mL}{\mathsf{L}}
\newcommand{\mLambda}{\mathsf{\Lambda}}
\newcommand{\mV}{\mathsf{V}}
\newcommand{\mW}{\mathsf{W}}
\newcommand{\calN}{\mathcal{N}}
\newcommand{\me}{\mathrm{e}}
\newcommand{\hmu}{\widehat{\mu}}
\newcommand{\hsigma}{\widehat{\sigma}}
\newcommand{\MLE}{\textup{EB}}
\newcommand{\full}{\textup{full}}
\newcommand{\GCV}{\textup{GCV}}
\newcommand{\opt}{\textup{opt}}
\newcommand{\CI}{\textup{CI}}
\newcommand{\NICE}{\textup{nice}}
\newcommand{\PEAKY}{\textup{peaky}}
\newcommand{\NOISE}{\textup{noise}}
\newcommand{\TRUE}{\textup{smooth}}
\newcommand{\errtol}{\varepsilon}
\newcommand{\diag}{\text{diag}}
\newcommand{\err}{\textup{err}}
\newcommand{\code}[1]{\texttt{#1}}
\newcommand{\ia}{2^{l+1}q}
\newcommand{\ib}{2^{l}(2q+1)-1}
\newcommand{\ic}{2^{l}(2q+1)}
\newcommand{\id}{2^{l+1}(q+1)-1}
\newcommand{\ja}{2^{l+1}s}
\newcommand{\jb}{2^{l}(2s+1)-1}
\newcommand{\jc}{2^{l}(2s+1)}
\newcommand{\jd}{2^{l+1}(s+1)-1}
\def\abs#1{\ensuremath{\left \lvert #1 \right \rvert}}
\newcommand{\norm}[2][{}]{\ensuremath{\left \lVert #2 \right \rVert}_{#1}}
\DeclarePairedDelimiter{\ceil}{\lceil}{\rceil}
\providecommand{\argmin}{\operatorname*{argmin}}
\newcommand\figref{Figure~\ref}
\newcommand\secref{Section~\ref}
\newcommand{\JRNote}[1]{}
\newtheorem{theorem}{Theorem}[section]
\newtheorem{lemma}[theorem]{Lemma}
\renewcommand*\env@matrix[1][\arraystretch]{%
	\edef\arraystretch{#1}%
	\hskip -\arraycolsep
	\let\@ifnextchar\new@ifnextchar
	\array{*\c@MaxMatrixCols c}}
\begin{document}
\setlength\abovedisplayskip{0pt}
\setlength{\belowdisplayskip}{0pt}

\title{Fast Automatic Bayesian Cubature Using Matching Kernels and Designs
%\thanks{}
}
% Grants or other notes about the article that should go on the front
% page should be placed within the \thanks{} command in the title
% (and the %-sign in front of \thanks{} should be deleted)
%
% General acknowledgments should be placed at the end of the article.

%\subtitle{Do you have a subtitle?\\ If so, write it here}

%\titlerunning{Short form of title}        % if too long for running head

\author{Jagadeeswaran Rathinavel       %etc.
}

\degree{Doctor of Philosophy}
\dept{Applied Mathematics}
% \date{July 2019}
%\authorrunning{Short form of author list} % if too long for running head

%\advisor{Fred J Hickernell}

\date{December 2019}
% The correct dates will be entered by the editor

\maketitle

\prelimpages         % Settings of preliminary pages are done with \prelimpages command

%%%  Acknowledgement %%%
%\begin{acknowledgement}     % acknowledgement environment, this is optional
%\end{acknowledgement}
\begin{acknowledgement}     % acknowledgement environment, this is optional
	\par  I want to thank my advisor Prof. Fred J Hickernell for his support and guidance in my completion of this thesis and throughout my studies here at IIT. 
	His support and motivation have given me the confidence to endure through the research. 
	
	I would like to also thank the GAIL project collaborators with
	whom I have worked to add my new algorithms to the GAIL MATLAB toolbox: Prof. Sou-Cheng Choi,
	Yuhan Ding, Lan Jiang, Xin Tong, and Kan Zhang. Especially, Prof. Sou-Cheng Choi's support and guidance as the project leader helped me to focus on my cubature algorithms.
	
	My special gratitude also goes to my thesis committee members, Prof. Jinqiao Duan,
	Prof. Fred J Hickernell, Prof. Shuwang Li, and Prof. Geoffrey Williamson. Above all, I want to thank them because they were flexible and willing to dedicate time to review my work and attend my comprehensive and defense examinations.
	
	I would like to thank Prof. Dirk Nuyens for suggestions, valuable tips and notes when we were researching higher order nets and kernels.
	
	I would like to thank the organizers of the SAMSI-Lloyds-Turing Workshop on Probabilistic Numerical Methods, where a part of preliminary version of this work was discussed.  I also thank Prof. Chris Oates and Prof. Sou-Cheng Choi for valuable comments.
	
	I would like to specifically thank my friend Samuel Davidson for reviewing and suggesting the improvements on the text.
	
	Last but not least, I would not be able to make it without the support of my family. I would like to thank my wife for her continuous support and sacrifice. I also would like to thank my parents for their endless support.

%This research was supported in part by the National Science Foundation grants DMS-1522687 and DMS-1638521 (SAMSI).

\end{acknowledgement}

% Table of Contents
\tableofcontents

\clearpage

% List of Tables
\listoftables

\clearpage

% \listofalgorithmss
% \clearpage

%List of Figures
\listoffigures

\clearpage

\begin{abstract}

Automatic cubatures approximate multidimensional integrals to user-specified error tolerances. 
In many real-world integration problems, the analytical solution is either unavailable or difficult to compute.
To overcome this, one can use numerical algorithms that approximately estimate the value of the integral. 

For high dimensional integrals, quasi-Monte Carlo (QMC) methods are very popular.
QMC methods are equal-weight quadrature rules where the quadrature points are chosen deterministically, unlike Monte Carlo (MC) methods where the points are chosen randomly.
% \JRNote{add ref} 
The families of integration lattice nodes and digital nets are the most popular quadrature points used. 
These methods consider the integrand to be a deterministic function.
% Section 2.2 describes digital sequences and rank-1 lattice node sequences, two of the most common points used in quasi-Monte Carlo.
An alternative approach, called Bayesian cubature, postulates the integrand to be an instance of a Gaussian stochastic process.  
% For high dimensional problems, it makes sense to fix the sampling density but determine the sample size, $n$, automatically.

For high dimensional problems, it is difficult to adaptively change the sampling pattern. But one can automatically determine the sample size, $n$, given a fixed and reasonable sampling pattern. We take this approach using a Bayesian perspective.
We assume a Gaussian process parameterized by a constant mean and a covariance function defined by a scale parameter and a function specifying how the integrand values at two different points in the domain are related.
These parameters are estimated from integrand values or are given non-informative priors. This leads to a credible interval for the integral.  The sample size, $n$, is chosen to make the credible interval for the Bayesian posterior error no greater than the desired error tolerance. 

However, the process just outlined typically requires vector-matrix operations  with a computational cost of $O(n^3)$. Our innovation is to pair low discrepancy nodes with matching kernels, which lowers the computational cost to $O(n \log n)$. 
We begin the thesis by introducing the Bayesian approach to calculate the posterior cubature error and define our automatic Bayesian cubature (Chapter~\ref{sec:BC}). Although much of this material is known, it is used to develop the necessary foundations.
Some of the major contributions of this thesis include the following:
%\begin{itemize}
	1) The fast Bayesian transform is introduced. This generalizes the techniques that speedup Bayesian cubature when the kernel matches low discrepancy nodes.
	2) The fast Bayesian transform approach is demonstrated using two methods: a) rank-1 lattice sequences and shift-invariant kernels, and b) Sobol' sequences and Walsh kernels.
	These two methods are implemented as fast automatic Bayesian cubature algorithms in the Guaranteed Automatic Integration Library (GAIL).
	3) We develop additional numerical implementation techniques: 
	a) rewriting the covariance kernel to avoid cancellation error, 
	b) gradient descent for hyperparameter search, and
	c) non-integer kernel order selection.
%\end{itemize}
The thesis concludes by applying our fast automatic Bayesian cubature algorithms to three sample integration problems. We show that our algorithms are faster than the basic Bayesian cubature and that they provide answers within the error tolerance in most cases. A significant portion of this thesis comprising an automatic Bayesian cubature algorithm using lattice sequences and shift-invariant kernels was published and discussed in \cite{JagHic09a, HicJag09a}.

The Bayesian cubatures that we develop are guaranteed for integrands belonging to cone of functions which reside in the middle of the sample space. The concept of a cone of functions is also explained briefly. % in \secref{sec:cone_of_functions}.

%\keywords{Bayesian cubature \and Probabilistic numeric methods \and GAIL}
% \PACS{PACS code1 \and PACS code2 \and more}
% \subclass{MSC code1 \and MSC code2 \and more}
\end{abstract}

\textpages     % Settings of text-pages are done with \textpages command

\Chapter{INTRODUCTION}

\Section{Cubature}\label{intro}

Cubature is the problem of inferring a numerical value for a definite integral, 
$ \mu := \int_{\reals^d} g(\vx) \, \dif \vx$, where $\mu$ has no closed form analytic expression. Typically, $g$ is accessible through a black-box function routine. 
Cubature means numerical multivariate integration and is a key component of many problems in scientific computing, finance \cite{Gla03}, statistical modeling, imaging \cite{Keller2013}, uncertainty quantification, machine learning \cite{Goodfellow-et-al-2016}, etc.
%\JRNote{add references}

The integral may often be expressed as
\begin{equation}
\label{eqn:defn_mu}
\mu:= \mu(f) := \Ex[f(\boldsymbol{X})] = \int_{[0,1]^d} f(\vx)\, \dvx, 
\end{equation}
where $f:[0,1]^d \to \reals$ is the integrand, and $\boldsymbol{X} \sim \mathcal{U}[0,1]^d$.  The process of transforming the original integral into the form of \eqref{eqn:defn_mu} is addressed in \cite{BecHae92b, Sid08a, Sid93, Lau96a, CriEtal07}.  The cubature may be an affine function of integrand values:
\begin{equation}
\label{eqn:defn_hmu}  % remove this
\hmu: = \hmu(f) := w_0 + \sum_{i=1}^{n} f(\vx_i) w_i, \quad \mathcal{P} :=\{\vx_i\}_{i=1}^n \subset [0,1]^d
\end{equation}
where the weights, $w_0$, and  $\vw = (w_i)_{i=1}^n \in \reals^n$, and the nodes, $\mathcal{P}$, are chosen to make the error, $\abs{\mu - \hmu}$, small. The integration domain $[0,1]^d$ is convenient for the low discrepancy node sets that we use.  The nodes are assumed to be deterministic.
The integral of function $f$ is the same over $[0, 1]^d$ or $(0, 1)^d$ or $[0, 1)^d$. So we use $[0, 1]^d$ or $[0, 1)^d$ depending on the application. Most often $[0, 1)^d$ is preferred especially for extensible node-sets because  it partitions easily into congruent subhypercubes.
This research focuses on multivariate numerical integrals where the computational cost is a bottleneck. %of evaluating the integrand 

\pagebreak

\Section{Stopping Criterion}

We construct a reliable stopping criterion that determines the number of integrand values required, $n$, to ensure that the error is no greater than a user-defined error tolerance denoted by $\varepsilon$, i.e., 
\begin{equation}
\label{eqn:err_crit} 
\abs{\mu - \hmu} \leq \errtol .
\end{equation}
Rather than relying on strong assumptions about the integrand, such as an upper bound on its variance or total variation, we construct a stopping criterion that is based on a credible interval arising from a Bayesian approach to the problem.  We build upon the work of Briol et al.~\cite{BriEtal18a}, Diaconis~\cite{Dia88a}, O'Hagan~\cite{OHa91a}, Ritter~\cite{Rit00a}, Rasmussen and Ghahramani~\cite{RasGha03a}, and others.  Our algorithm is an example of \emph{probabilistic numerics}.
To study numerical algorithms from a statistical point of view, where uncertainty is formally due to the presence of an unknown numerical error, is the goal of probabilistic numerics.

\JRNote{Briefly explain Probabilistic numeric ..}

%Our primary contribution in this research is to demonstrate how the choice of a family of covariance kernels that match the low discrepancy sampling nodes facilitates fast computation of the cubature and the data-driven stopping criterion.  Our cubature requires $n$ function values---at a cost of $\$(f)$ each---plus $\Order(n \log n)$ operations to check whether the error tolerance is satisfied.  The total cost of our algorithm is then $\Order(n [\$(f) + \log n])$.  This is significantly fewer operations than the $\Order(n^3)$ typically required for Bayesian cubature.  If function evaluation is expensive, then $\$(f)$ might be similar in magnitude to $\log n$.

%%%%%%%%%%%%%%%%%%%%%%%%%%%%%%%%%%%5
Our primary contribution in this research is to demonstrate how the choice of a family of covariance kernels that match the low discrepancy sampling nodes facilitates fast computation of the cubature and the data-driven stopping criterion.  Our Bayesian cubature requires a computational cost of
\begin{equation} \label{eqn:OuralgoCost}
\Order\bigl(n \$(f) + N_{\opt}[n\$(C) + 
n \log(n)] \bigr),
\end{equation} 
where $\$(f)$ is the cost of one integrand value, $\$(C)$ is the cost of a single covariance kernel value,  $\Order(n \log(n))$ is the cost of a fast Bayesian transform, and $N_{\opt}$ is an upper bound on the number of optimization steps required to choose the hyperparameters. If function evaluation is expensive, e.g., the output of a computationally intensive simulation, or if $\$(f) = \Order(d)$ for large $d$, then $\$(f)$ might be similar in magnitude to $N_{\opt} \log(n)$ in practice.  Typically, $\$(C) = \Order(d)$.  Note that the $\Order(n \log(n))$ contribution is $d$ independent.

In contrast to our fast algorithm, the typical computational cost for Bayesian cubature is
\begin{equation} \label{eqn:TheiralgoCost}
\Order\bigl(n \$(f) + N_{\opt}[n^2\$(C) + n^3] \bigr),
\end{equation} 
which is explained in Section~\ref{sec:bayes_cubature_algo}. Note that apart from evaluating the integrand, the computational cost in \eqref{eqn:TheiralgoCost} is much larger than that in \eqref{eqn:OuralgoCost}.  

%%%%%%%%%%%%%%%%%%%%%%%%%%%%%%%%%%%%%%%%55

\Section{Low Discrepancy Points}

Low discrepancy points are characterized by how uniformly the points are distributed, which is measured by the \emph{discrepancy}. % score, especially when the points are projected onto low-dimensions. 
The goal is to have maximum uniform space filling. The discrepancy is defined as below.
Let $\mathcal{M}$ be the set of all intervals of the form $\prod_{\ell=1}^{d} [a_\ell, b_\ell) = \{ \vx \in \mathbb{R}^d : a_\ell \le x_\ell \le b_\ell, 0 \le a_\ell \le b_\ell \le 1 \}$. Then, the discrepancy of a point set $\mathcal{P}$ is,
\begin{align*}
D(\mathcal{P}) := \sup_{M \in \mathcal{M}} 
\abs{ \frac{\abs{M \cap \mathcal{P}}}{\abs{\mathcal{P}}} - \lambda_L(M) },
\end{align*}
where $\abs{\mathcal{P}}$ is the cardinality of the set $\mathcal{P}$, and $\lambda_L$ is the Lebesgue measure.
The \emph{low discrepancy points} satisfy $D(\mathcal{P}) = \Order((\log n)^d/n)$.
In this work we experiment with two most popular low discrepancy point sets, 1) lattice points, and 2) Sobol' points.

\Section{Prior Work}

Hickernell~\cite{Hic17a} compares different approaches to cubature error analysis depending on whether the rule is deterministic or random and whether the integrand is assumed to be deterministic or random.  Error analysis that assumes a deterministic integrand lying in a Banach space leads to an error bound that is typically impractical for deciding how large $n$ must be to satisfy \eqref{eqn:err_crit}.  The deterministic error bound includes a (semi-)norm of the integrand, which is often more complex to compute than the original integral.

Hickernell and Jim\'enez-Rugama~\cite{HicJim16a,JimHic16a} have developed stopping criteria for cubature rules based on low discrepancy nodes by tracking the decay of the discrete Fourier coefficients of the integrand.  The algorithms proposed here also rely on discrete Fourier coefficients, but in a different way.  We only discuss automatic Bayesian cubature for absolute error tolerances in this thesis. The recent work by Hickernell, Jim\'enez-Rugama, and Li~\cite{HicEtal17a} suggests how one might accommodate more general error criteria, such as relative error tolerances which has been adapted in the MATLAB implementation of our algorithms.

Chapter~\ref{sec:BC} explains the Bayesian approach to calculate the posterior cubature error and defines our automatic Bayesian cubature. Although much of this material is known, it is included for completeness.  We end Chapter~\ref{sec:BC}  by demonstrating why Bayesian cubature is typically computationally expensive.
Chapter~\ref{sec:fast_BC}  introduces the concept of covariance kernels that match the nodes and expedite the computations required by our automatic Bayesian cubature. 
Chapter~\ref{sec:shift_invariant_kernel} implements this concept for shift invariant kernels and rank-1 lattice nodes. It also develops approaches to build shift-invariant kernels of continuous valued kernel order rather than fixing the kernel order to integer values.
Chapter~\ref{sec:sobol_walsh} demonstrates another implementation of matching nodes and kernel using Sobol' points and Walsh kernels. It also shows that the fast Walsh Hadamard as the fast Bayesian transform for this case.
Chapter~\ref{sec:NumImpl} describes how to avoid cancellation error for kernels of product form. 
It also covers  some of the additional techniques used in the implementation of our Bayesian Cubature algorithms. 
Numerical examples are provided in Chapter~\ref{sec:NumExp} to demonstrate the performance and advantages of our new algorithms.  We conclude with a brief discussion and potential future work in Chapter~\ref{sec:conclusion-future-work}.

We use the terms integrand or function interchangeably to denote the function $f$ being considered for the numerical integration. Also, we use the terms, nodes, points, node-sets, designs, and data-sites interchangeably to denote the points $\mathcal{P}$ used in the cubature.

\Chapter{Bayesian Cubature}
\label{sec:BC} 
% \JRNote{Briefly explain Bayesian approach using the words of diaconis, poincare work ..}

The Bayesian approach for numerical analysis was popularized by Diaconis \cite{Dia88a}. The earliest reference for such kind of approach dates back to Poincar\'e, where, the theory of interpolation was discussed.
Diaconis motivates the reader by interpreting the most well known numerical methods, 1) trapezoidal rule and 2) splines, from the statistical point of view with whatever is known about the integrand as prior information. 
For example, the trapezoidal rule can be interpreted as a Bayesian method with prior information being modeled as a Brownian motion in the sample space $\mathcal{C}[0,1)$, the space of continuous functions. %, with normal prior.

This research is focused on the Bayesian approach for numerical integration that is known as Bayesian cubature as introduced by O'Hagan \cite{OHagen1991}.  % $\mathbb{P}_f$
Bayesian cubature returns a probability distribution, that expresses belief about the true value of integral, $\mu(f)$.
This posterior probability distribution is based on a prior that depends on $f$, which is computed via Bayes' rule using the \emph{data} contained in the function evaluations \cite{BriEtal18a}. 
%The maximum likelihood estimate of $\mathbb{P}_f$ can be interpreted as a point estimate of the integral. 
The distribution in general captures numerical uncertainty due to the fact that we have only used a finite number of function values to evaluate the integral.

\Section{Bayesian Posterior Error}
\label{sec:BayesPostErr}

We assume the integrand, $f$, is an instance of a stochastic Gaussian process, i.e., $f \sim \mathcal{GP}(m,s^2 C_\vtheta)$.  Specifically, $f$ is a real-valued random function with constant mean $m$ and covariance function $s^2C_\vtheta$, where $s$ is a positive scale factor, and $C_\vtheta: [0,1]^d \times [0,1]^d \to \mathbb{R} $ is a symmetric, positive-definite function and, parameterized by $\vtheta$:
\begin{multline} \label{FJH:eq:CondPosDef}
\mC^T = \mC,  \quad \va^T \mC \va > 0, \quad \text{where }  \mC = \left(  C_\vtheta(\vx_i,\vx_j)  \right)_{i,j=1}^n,\\
 \text{for all } \va \ne 0, \;
 n\in \mathbb{N}, \; \text{distinct} \; \vx_1, \ldots, \vx_n \in [0,1]^d.
\end{multline}
The covariance function, $C$, and the Gram matrix, $\mC$, depend implicitly on $\vtheta$, but the notation may omit this for simplicity's sake.
Procedures for estimating or integrating out the hyperparameters $m$, $s$, and $\vtheta$ are explained later in this section.

For a Gaussian process, all vectors of linear functionals of $f$ have a multivariate Gaussian distribution. 
%Defining  $\vf  := \left( f(\vx_i)\right)_{i=1}^n$, the integrand values sampled at $\left( \vx_i\right)_{i=1}^n$,  as the multivariate Gaussian vector of function values, it follows that 
For any deterministic sampling scheme with distinct nodes, $\{\vx_i\}_{i=1}^n$, and defining  $\vf  := \left( f(\vx_i)\right)_{i=1}^n$ as the multivariate Gaussian vector of function values, it follows from the definition of a Gaussian process that 
\begin{subequations} \label{eqn:fGaussDist}
\begin{align}
\vf  & \sim \calN(m \vone, s^2 \mC), \\
\mu & \sim \calN(m, s^2 c_0), 
\\
\text{where }
c_0 &:= \int_{[0,1]^{d} \times [0,1]^{d}} C_\vtheta(\vx,\vt) \, \dif{\vx} \, \dif{\vt}, \\
\cov(\vf, \mu) &= \left(  \int_{[0,1]^d} C(\vt,\vx_i) \, \D \vt \right)_{i=1}^n  =: \vc.
\end{align}
\end{subequations}
Here, $c_0$ and $\vc$ depend implicitly on $\vtheta$.  We assume the covariance function $C$ is simple enough that the integrals in these definitions can be computed analytically.
We need the following lemma to derive the posterior error of our cubature. 

\begin{lemma} \cite[(A.6), (A.11--13)]{RasWil06a} \label{thrm:condDist} If $\vY = (\vY_1, \vY_2)^T \sim \calN (\vm,\mC)$, where $\vY_1$ and $\vY_2$ are random vectors of arbitrary length, and 
	\begin{gather*}
	\vm = \begin{pmatrix} \vm_1 \\ \vm_2 \end{pmatrix} = \begin{pmatrix} \Ex(\vY_1) \\ \Ex(\vY_2) \end{pmatrix}, \\
	\mC = \begin{pmatrix}
	\mC_{11} & \mC_{21}^T \\ 	\mC_{21} & \mC_{22}
	\end{pmatrix} =
	\begin{pmatrix}
	\var(\vY_{1}) & \cov(\vY_{1}, \vY_2) \\ 	\cov(\vY_2,\vY_{1}) & \var(\vY_{2})
	\end{pmatrix} 
	\end{gather*}
	then 
	\begin{align*}
	\vY_1 \vert \vY_2 \; \sim \; \calN \bigl(\vm_1 + \mC_{21}^T \mC_{22}^{-1}(\vY_2 - \vm_2), \quad \mC_{11} - \mC_{21}^T \mC_{22}^{-1} \mC_{21} \bigr).
	\end{align*}
Moreover, the inverse of the matrix $\mC$ may be partitioned as
\begin{gather*}
\mC^{-1} = \begin{pmatrix} \mA_{11} & \mA_{21}^T \\ \mA_{21} & \mA_{22} \end{pmatrix}, \\
\mA_{11} = (\mC_{11} - \mC_{12} \mC_{22}^{-1} \mC_{21})^{-1}, \qquad 
\mA_{21} = -  \mC_{22}^{-1} \mC_{21} \mA_{11}, \\ 
\mA_{22} = \mC_{22}^{-1} + \mC_{22}^{-1} \mC_{21} \mA_{11} \mC_{21}^T \mC_{22}^{-1}.
\end{gather*}
\end{lemma}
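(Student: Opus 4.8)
The plan is to establish the two assertions in sequence, proving the block-inversion identity first and then obtaining the conditional law from it (or, more conveniently, from a decorrelation argument). For the inverse, I would simply verify that the proposed partitioned matrix is a right inverse of $\mC$ by expanding the $2\times 2$ block product and matching each block against the identity. Writing $\mathsf{S} := \mC_{11} - \mC_{12}\mC_{22}^{-1}\mC_{21}$ for the Schur complement, so that $\mA_{11} = \mathsf{S}^{-1}$, and using $\mC_{12} = \mC_{21}^T$ together with the symmetry of $\mathsf{S}$ and of $\mC_{22}$, the $(1,1)$ block becomes $\mC_{11}\mathsf{S}^{-1} - \mC_{12}\mC_{22}^{-1}\mC_{21}\mathsf{S}^{-1} = \mathsf{S}\mathsf{S}^{-1} = \mathsf{I}$, the $(2,1)$ block is $\mC_{21}\mA_{11} - \mC_{22}\mC_{22}^{-1}\mC_{21}\mA_{11} = \mathsf{0}$, and the $(1,2)$ and $(2,2)$ blocks collapse similarly after one round of cancellation. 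Since $\mC$ is square this right inverse is the inverse, and it is symmetric because $\mC$ is, matching the stated partition. This step presupposes that $\mC_{22}$ and $\mathsf{S}$ are nonsingular, which is automatic under the paper's standing assumption that $\mC$ is positive definite, since every principal submatrix and every Schur complement of a positive-definite matrix is positive definite.

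For the conditional distribution I would use the decorrelation trick rather than manipulate Gaussian densities. Set $\vZ := \vY_1 - \mC_{21}^T\mC_{22}^{-1}(\vY_2 - \vm_2)$. Since $(\vZ^T,\vY_2^T)^T$ is a deterministic linear image of the Gaussian vector $\vY$, it is jointly Gaussian, and elementary covariance algebra gives $\Ex(\vZ) = \vm_1$, $\cov(\vZ,\vY_2) = \mC_{21}^T - \mC_{21}^T\mC_{22}^{-1}\mC_{22} = \mathsf{0}$, and $\var(\vZ) = \mC_{11} - \mC_{21}^T\mC_{22}^{-1}\mC_{21}$ (the two cross terms combining to $-2\mC_{21}^T\mC_{22}^{-1}\mC_{21}$ and the quadratic term restoring $+\mC_{21}^T\mC_{22}^{-1}\mC_{21}$). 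Because jointly Gaussian and uncorrelated vectors are independent, the conditional law of $\vZ$ given $\vY_2$ is its unconditional law $\calN(\vm_1,\ \mC_{11} - \mC_{21}^T\mC_{22}^{-1}\mC_{21})$. Finally $\vY_1 = \vZ + \mC_{21}^T\mC_{22}^{-1}(\vY_2 - \vm_2)$, and conditioning on $\vY_2$ turns the last term into a constant shift, so $\vY_1 \vert \vY_2 \sim \calN\bigl(\vm_1 + \mC_{21}^T\mC_{22}^{-1}(\vY_2 - \vm_2),\ \mC_{11} - \mC_{21}^T\mC_{22}^{-1}\mC_{21}\bigr)$, which is the claim.

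I expect the only genuinely delicate point to be the appeal to ``jointly Gaussian $+$ uncorrelated $\Rightarrow$ independent'': this fails for vectors that are merely marginally Gaussian, and it is legitimate here precisely because $(\vZ,\vY_2)$ is built as a linear image of the single Gaussian vector $\vY$, so its joint density is Gaussian with block-diagonal covariance and hence factorizes. Everything else is bookkeeping --- the block product in the first part and the three moment computations in the second --- with the only thing to watch being consistent substitution of $\mC_{12} = \mC_{21}^T$. If one prefers a self-contained route that also exhibits the link to the inversion formula, one can instead write out the joint density of $\vY$, insert the partitioned $\mC^{-1}$ into the quadratic form, and complete the square in $\vY_1$; this yields the same mean and covariance, with $\mA_{11}^{-1} = \mathsf{S}$ reappearing as the conditional covariance, but it is computationally heavier and I would relegate it to a remark.
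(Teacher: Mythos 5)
Your proof is correct. Note, however, that the thesis does not prove this lemma at all: it is quoted with a citation to Rasmussen and Williams \cite[(A.6), (A.11--13)]{RasWil06a}, so there is no in-paper argument to match against. Compared with the standard derivation in that reference---which obtains the conditional law by writing the joint Gaussian density, inserting the partitioned form of $\mC^{-1}$, and completing the square in $\vY_1$, so that $\mA_{11}^{-1} = \mC_{11} - \mC_{21}^T\mC_{22}^{-1}\mC_{21}$ emerges directly as the conditional covariance---your route cleanly decouples the two claims: the block inverse is checked by a one-line multiplication against the Schur complement, and the conditional distribution comes from the decorrelation argument with $\vZ = \vY_1 - \mC_{21}^T\mC_{22}^{-1}(\vY_2 - \vm_2)$, using that $(\vZ,\vY_2)$ is a linear image of $\vY$ and hence jointly Gaussian, so zero cross-covariance gives independence. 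This is more elementary (no density manipulation, no normalization constants) and makes the needed nondegeneracy assumptions explicit via positive definiteness of $\mC$, as guaranteed by the paper's assumption \eqref{FJH:eq:CondPosDef}; what it gives up is the structural link between the two halves of the lemma that the density computation exhibits, namely that the conditional covariance is exactly $\mA_{11}^{-1}$, which you rightly mention as an optional remark. Your flagged caveat about ``uncorrelated $\Rightarrow$ independent'' requiring joint (not merely marginal) Gaussianity is exactly the right point to police, and your verification of a right inverse suffices since $\mC$ is square and nonsingular.
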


It follows from Lemma \ref{thrm:condDist} that the \emph{conditional} distribution of the integral given observed function values, $\vf = \vy$ is also Gaussian:
\begin{align} \label{eqn:condInteg}
\mu | (\vf = \vy) \sim \calN \bigl(m (1 - \vc^T \mC^{-1} \vone)  + \vc^T \mC^{-1} \vy, \;\;
s^2(c_0  -\vc ^T \mC^{-1} \vc) \bigr).
\end{align}
The natural choice for  the cubature is the posterior mean of the integral, namely, 
\begin{equation}
\label{eqn:BayesCub}
\widehat{\mu}  \vert ( \vf = \vy)
= m(1 - \vone^T  \mC^{-1}\vc )
+ \vc^T \mC^{-1} \vy,
\end{equation}
which takes the form of \eqref{eqn:defn_hmu}.
Under this definition, the cubature error has zero mean and a variance depending on the choice of nodes:
\begin{equation*}
%\label{eqn_error_cond_prob}
(\mu-\hmu) | (\vf = \vy)
 \sim  \calN 
\left(
0, \quad
s^2 (c_0 - \vc^T\mC^{-1}\vc) 
\right).
\end{equation*}
A credible interval for the integral is given by 
\begin{subequations} \label{eqn_prob_confidence_interval}
	\begin{gather}
	\mathbb{P}_f \left[
	|\mu-\hmu| \leq \err_{\CI}
	\right] = 99\%, \\
	\err_{\CI} = 2.58 s \sqrt{c_{0} - \vc^T\mC^{-1}\vc}.
	\end{gather}
\end{subequations}
Naturally, $2.58$ and $99\%$ can be replaced by other quantiles and credible levels.

\Section{Hyperparameter Estimation}
\label{sec:stopping_criteria}

The credible interval in \eqref{eqn_prob_confidence_interval} suggests how our automatic Bayesian cubature proceeds.  Integrand data is accumulated until the width of the credible interval, $\err_{\CI}$, is no greater than the error tolerance.  As $n$ increases, one expects $c_{0} - \vc^T\mC^{-1}\vc$ to decrease for well-chosen nodes, $\{\vx_i\}_{i=1}^n$. Please note that the credible interval depends on the parameters $m, s$, and $\vtheta$

Note that $\err_{\CI}$ has no explicit dependence on the integrand values, even though one would intuitively expect that a larger integrand should imply a larger $\err_{\CI}$.  This is because the hyperparameters, $m, s$, and $\vtheta$, have not yet been inferred from integrand data.  After inferring the hyperparameters, $\err_{\CI}$ \emph{does reflect the size} of the integrand values. The following next few sections describe three approaches to hyperparameter estimation.

\Section{Empirical Bayes}  \label{sec:MLE}

The first and a very straight forward approach is to estimate the parameters via maximum likelihood estimation.  The log-likelihood function of the parameters given the function data $\vy$ is:
\begin{align*}
l(s,m,\vtheta | \vy)
&= -\frac{1}{2} s^{-2} (\vy-m\vone)^T\mCInv(\vy-m\vone) 
\\ & \qquad
 - \frac{1}{2} \log(\det\, \mC) - \frac{n}{2} \log(s^2) + \text{constants.}
\end{align*}
Maximizing the log-likelihood first with respect to $m$, then with respect to $s$, and finally with respect to $\vtheta$ yields
\begin{align*}
% \label{eqn_m_MLE}
m_\MLE &= \frac{\vone^T \mCInv \vy }{ \vone^T \mCInv \vone}, \\
\nonumber
s^2_{\MLE}  
&= \frac{1}{n} (\vy-m_{\MLE}\vone)^T\mCInv(\vy-m_{\MLE}\vone) 
\\
% \label{eqn_s2_MLE}
&= 
\frac{1}{n}
\vy^T 
\left[ 
\mCInv - 
\frac{ \mCInv \vone \vone^T \mCInv }{\vone^T\mCInv \vone}
\right] \vy, \\
% \label{eqn:thetaMLE} 
\vtheta_\MLE
&= \argmin_{\vtheta} \biggl \{
\log\left(\vy^T 
\left[ \mCInv - 
\frac{ \mCInv \vone \vone^T \mCInv }{\vone^T\mCInv \vone}
\right] \vy 
\right)  +  \frac{1}{n} \log(\det(\mC))
\biggr \}.
\end{align*}
The empirical Bayes estimate of $\vtheta$ balances minimizing the covariance scale factor, $s^2_{\MLE}$, against minimizing  $\det(\mC)$. 

Under these estimates of the parameters, the cubature \eqref{eqn:BayesCub} and the credible interval \eqref{eqn_prob_confidence_interval} simplify to 
\begin{align} % \label{eqn:cubMLE}
\nonumber
\hmu_\MLE  &:= 
\left(
\frac{ (1 - \vone^T  \mCInv\vc )  \vone }{ \vone^T \mCInv \vone}   +  \vc 
\right)^T  \mCInv \vy, \\
% \label{eqn:errMLE} 
\nonumber
\err_\MLE^2 & : = \frac{2.58^2}{n}
 \vy^T \left[ \mCInv - 
\frac{ \mCInv \vone \vone^T \mCInv }{\vone^T\mCInv \vone}
\right] \vy  
(c_0 - \vc^T\mC^{-1}\vc ), \\
\label{eqn_prob_CI_MLE}
\mathbb{P}_f &\left[
|\mu-\hmu_\MLE| \leq \err_\MLE \right]  = 99\%.
\end{align}
Here $c_0$, $\vc$, and $\mC$ are assumed implicitly to be based on $\vtheta = \vtheta_\MLE$.   
% The empirical Bayes estimate of $\vtheta$ balances minimizing the covariance scale factor, $s^2_{\MLE}$, against minimizing  $\det(\mC)$. 

\Subsection{Gradient descent to find optimal shape parameter} \label{grad_descent_MLE}
The equation specifying $\vtheta_\MLE$ as defined in \eqref{eqn:thetaMLE} does not say how the parameter search can be done. There exist empirical algorithms \cite{Bre73, For77} that one could use to accomplish the same.
% Searching for $\vtheta_\MLE$ as defined in \eqref{eqn:thetaMLE} is not very efficient. 
	Since the objective function is known we could compute the gradient.
Using the gradient of $l(s,m,\vtheta | \vy)$, one can apply optimization techniques such as gradient descent to find the optimal value faster. Let us define the objective function for the same purpose by excluding the negative sign, which modifies the problem to become a minimization of
\begin{align*}
\mathcal{L}(\vtheta | \vy)
&:= \frac{1}{n} \log(\det\, \mC) +  \log\left( (\vy-m_\MLE\vone)^T\mCInv(\vy-m_\MLE\vone) \right) + \text{constants.}
\end{align*}
Taking derivative with respect to $\theta_\ell$, for $\ell=1,\cdots,d$
\begin{align*}
\frac{\partial}{\partial \theta_\ell} \mathcal{L}(\vtheta | \vy)
&=  \frac{1}{n} \frac{\partial}{\partial \theta_\ell} \log(\det\, \mC) + \frac{\partial}{\partial \theta_\ell} 
\log\left((\vy-m_\MLE\vone)^T\mCInv(\vy-m_\MLE\vone) \right)
\\
&= \frac 1n \trace{\left( \mCInv \frac{\partial \mC}{\partial \theta_\ell} \right)}
- 
\frac
{\left((\vy-m_\MLE\vone)^T\mCInv\right)^T 
	\left( \frac{\partial \mC}{\partial \theta_\ell} \right)
	((\vy-m_\MLE\vone)^T\mCInv)}
{(\vy-m_\MLE\vone)^T\mCInv(\vy-m_\MLE\vone)}
\end{align*}
where we used some of the results from \cite{Dong2017a}. 
This can be used with gradient descent as follows,
\begin{align}
\label{eqn:deep_descent}
\theta_\ell^{(j+1)} = \theta_\ell^{(j)} - \nu_\ell \frac{\partial}{\partial \theta_\ell} \mathcal{L}(\vtheta | \vy), \quad j=0,1,\cdots
\end{align}
where $\nu_\ell$ is the step size for the gradient descent.

\Section{Full Bayes}

Rather than using maximum likelihood to determine $m$ and $s$, one can treat them as hyper-parameters with a non-informative, conjugate prior, namely $\vrho_{m,s^2}(\xi, \lambda) \propto 1/\lambda$. Then the posterior density for the integral given the data using Bayes theorem is,
\begin{align*}
\MoveEqLeft[1]{\rho_{\mu}(z | \vf = \vy)} \\
& \propto \int_{0}^\infty \int_{-\infty}^\infty \rho_{\mu}(z | \vf = \vy, m = \xi, s^2 = \lambda)  \rho_{\vf}(\vy | \xi, \lambda ) \rho_{m, s^2}(\xi, \lambda) \, \D \xi \D \lambda \\
&\qquad \qquad \qquad \qquad \text{by the properties of conditional probability} \\
& \propto \int_{0}^\infty \int_{-\infty}^\infty \rho_{\mu}(z | \vf = \vy, m = \xi, s^2 = \lambda)  \rho_{\vf}(\vy | \xi, \lambda ) \rho_{m, s^2}(\xi, \lambda) \, \D \xi \D \lambda \\
&\qquad \qquad \qquad \qquad \text{by Bayes' Theorem} \\
& \propto \displaystyle \int_{0}^\infty  \frac{1}{\lambda^{(n+3)/2}} 
 \int_{-\infty}^\infty  \exp \biggl( -\frac{1}{2\lambda}\biggl\{
\frac{
[z - \xi (1 - \vc^T \mC^{-1} \vone)  -  \vc^T \mC^{-1} \vy]^2}
{c_0  -\vc ^T \mC^{-1} \vc}  \\
& \qquad \qquad \qquad  + (\vy - \xi \vone)^T \mC^{-1}(\vy - \xi \vone) \biggr \} \biggr) \, \D \xi \D \lambda \\
&\qquad \qquad
\text{by \eqref{eqn:fGaussDist}, \eqref{eqn:condInteg}} \; \text{and} \; \rho_{m,s^2}(\xi,\lambda) \propto 1/\lambda \\
& \propto \displaystyle \int_{0}^\infty  \frac{1}{\lambda^{(n+3)/2}} \int_{-\infty}^\infty  \exp\left( -\frac{\alpha \xi^2 -2 \beta \xi + \gamma}{2\lambda(c_0  -\vc ^T \mC^{-1} \vc)} \right) \, \D \xi \D \lambda, \\
\intertext{where}
\alpha & = (1 - \vc^T \mC^{-1} \vone)^2 + \vone^T \mC^{-1} \vone (c_0  -\vc ^T \mC^{-1} \vc),\\
\beta & =(1 - \vc^T \mC^{-1} \vone)(z - \vc^T \mC^{-1} \vy )
  + \vone^T \mC^{-1} \vy (c_0  -\vc ^T \mC^{-1} \vc),\\
\gamma &  = (z - \vc^T \mC^{-1} \vy )^2  + \vy^T \mC^{-1} \vy (c_0  -\vc ^T \mC^{-1} \vc).
\end{align*}
In the derivation above and below, factors that are independent of $\xi$, $\lambda$, or $z$ can be discarded since we only need to preserve the proportion.  But, factors that depend on $\xi$, $\lambda$, or $z$ must be kept.  
%Completing the square allows us to compute the integral with respect to $\xi$:
Completing the square $
\alpha \xi^2 -2 \beta \xi + \gamma 
= \alpha (\xi -\beta/\alpha)^2  - (\beta^2/\alpha) + \gamma,
$
allows us to evaluate the integrals with respect to $\xi$ and $\lambda$:
\begin{align*}
\MoveEqLeft{\rho_{\mu}(z | \vf = \vy)} \\
& \propto \displaystyle \int_{0}^\infty  \frac{1}{\lambda^{(n+3)/2}}  \exp\left( -\frac{  \gamma - \beta^2/\alpha}{2\lambda(c_0  -\vc ^T \mC^{-1} \vc)} \right)  \cdots \\
& \qquad \qquad \cdots \int_{-\infty}^\infty  \exp\left( -\frac{\alpha (\xi -\beta/\alpha)^2}{2\lambda(c_0  -\vc ^T \mC^{-1} \vc)} \right) \, \D \xi \D \lambda \\
& \propto \displaystyle \int_{0}^\infty  \frac{1}{\lambda^{(n+2)/2}}  \exp\left( -\frac{  \gamma - \beta^2/\alpha}{2\lambda(c_0  -\vc ^T \mC^{-1} \vc)} \right) \D \lambda \\
& \propto \left(\gamma - \frac{\beta^2}{\alpha}\right)^{-n/2} \propto \left(\alpha \gamma - \beta^2\right)^{-n/2}.
\end{align*}
Finally, we simplify the key term:
\begin{align*}
\alpha \gamma - \beta^2 
& = \vone^T \mC^{-1} \vone (c_0  -\vc ^T \mC^{-1} \vc) (z - \vc^T \mC^{-1} \vy )^2 \\
& \qquad \qquad - 2 \vone^T \mC^{-1} \vy (c_0  -\vc ^T \mC^{-1} \vc)(1 - \vc^T \mC^{-1} \vone)(z - \vc^T \mC^{-1} \vy ) \\
& \qquad \qquad + (1 - \vc^T \mC^{-1} \vone)^2 \vy^T \mC^{-1} \vy (c_0  -\vc ^T \mC^{-1} \vc) \\
&\qquad \qquad  + [\vone^T \mC^{-1} \vone \vy^T \mC^{-1} \vy - (\vone^T \mC^{-1} \vy)^2](c_0  -\vc ^T \mC^{-1} \vc)^2  \\
& \propto \vone^T \mC^{-1} \vone  \left (z - \vc^T \mC^{-1} \vy - \frac{(1 - \vc^T \mC^{-1} \vone)\vone^T \mC^{-1} \vy}{\vone^T \mC^{-1} \vone } \right )^2 \\
& \qquad \qquad -  \frac{[(1 - \vc^T \mC^{-1} \vone)\vone^T \mC^{-1} \vy]^2}{\vone^T \mC^{-1} \vone }  
+ (1 - \vc^T \mC^{-1} \vone)^2 \vy^T \mC^{-1} \vy \\
& \qquad \qquad (c_0  -\vc ^T \mC^{-1} \vc) [\vone^T \mC^{-1} \vone  \vy^T \mC^{-1} \vy
- (\vone^T \mC^{-1} \vy)^2] \\
& \propto \left (z - \left[ \frac{(1 - \vc^T \mC^{-1} \vone)\vone}{\vone^T \mC^{-1} \vone } + \vc \right]^T \mC^{-1} \vy \right )^2 \\
& \qquad  + \left[\frac{(1 - \vc^T \mC^{-1} \vone)^2}{\vone^T \mC^{-1} \vone} + (c_0  -\vc ^T \mC^{-1} \vc) \right] \times \vy^T\left[ \mC^{-1} 
- \frac{ \mC^{-1} \vone\vone^T \mC^{-1}}{\vone^T \mC^{-1} \vone}  \right]\vy
\\
& \propto (z - \widehat{\mu}_{\textup{full}})^2 + (n-1) \sigma_{\textup{full}}^2
\\
& \propto \left(1 +  \frac{1}{n-1} \frac{(z - \mu_{\textup{full}})^2}{\widehat{\sigma}_{\textup{full}}^2} \right),
\end{align*}
i.e.,
\begin{align}
\label{eqn:full_bayes_student_short}
\alpha \gamma - \beta^2 \propto 
\left(
1 +  \frac{(z - \hmu_{\textup{full}})^2}{(n-1)\widehat{\sigma}_{\textup{full}}^2}
\right),
\end{align}
where $\hmu_{\textup{full}} = \hmu_{\MLE}$ and 
\begin{multline*}
%\label{FJH:eq:signmaFull}
\hsigma^2_{\textup{full}} 
:= \frac{1}{n-1}
\vy^T\left[ \mC^{-1} 
- \frac{ \mC^{-1} \vone\vone^T \mC^{-1}}{\vone^T \mC^{-1} \vone}  \right]\vy
\times  \left[\frac{(1 - \vc^T \mC^{-1} \vone)^2}{\vone^T \mC^{-1} \vone} + (c_0  -\vc ^T \mC^{-1} \vc) \right].
\end{multline*}
%This means that $\mu \vert (\vf = \vy )$, properly centered and scaled, has a Student's $t$-distribution with $n-1$ degrees of freedom.   
% The estimated integral is the same as in the empirical Bayes case, $\hmu_{\textup{full}} = \hmu_{\MLE}$, but the confidence interval is wider:
The confidence interval is:
\begin{equation}
\label{eqn_prob_CI_full}
\mathbb{P}_f \left[
|\mu-\hmu_\MLE| \leq \err_{\textup{full}} \right]  = 99\%,
\end{equation}
where
\begin{equation*}
% \label{FJH:eq:errFull}
\err_{\textup{full}} 
:= t_{n-1,0.995} \hsigma_{\textup{full}} > \err_\MLE .
\end{equation*}
Here $t_{n-1,0.995}$ denotes the $99.5$ percentile of a standard Student's $t$-distribution with $n-1$ degrees of freedom. 
This means that $\mu \vert (\vf = \vy )$, properly centered and scaled, has a Student's $t$-distribution with $n-1$ degrees of freedom.  
The estimated integral is the same as in the empirical Bayes case, $\hmu_{\textup{full}} = \hmu_{\MLE}$, but the credible interval is wider.
In other words, the stopping criterion for the full Bayes case is more conservative than that in the empirical Bayes case, \eqref{eqn_prob_CI_MLE}.

Because the shape parameter, $\vtheta$, enters the definition of the covariance kernel in a non-trivial way, the only way to treat it as a hyperparameter and assign a tractable prior would be for the prior to be discrete.  We believe in practice that choosing such a prior involves more guesswork than using the empirical Bayes estimate of $\vtheta$ in \eqref{eqn:thetaMLE} or the cross-validation approach described next.

%%%%%%%%%%%%%%%%%%%%%%%%%%%%%%%%%%%%%%%%%%%%%%%%%%%%%%%%%%%%%%%%%%%%%%%%%%%%%%%%%%%%

\Subsection{Full Bayes with general prior}
Rather than using non-informative, conjugate prior one can use general prior, namely $\vrho_{m,s^2}(\xi, \lambda) \propto g(\lambda)$, which can generalize to any general function. One would be curious if the posterior function can be obtained from the data, i.e, the integrand values.
The posterior density for the integral given the data using Bayes theorem is,
\begin{align*}
\MoveEqLeft[1]{\rho_{\mu}(z | \vf = \vy)} \\
& \propto \int_{0}^\infty \int_{-\infty}^\infty \rho_{\mu}(z | \vf = \vy, m = \xi, s^2 = \lambda)  \rho_{\vf}(\vy | \xi, \lambda ) \rho_{m, s^2}(\xi, \lambda) \, \D \xi \D \lambda \\
&\qquad \qquad \qquad \qquad \text{by the properties of conditional probability} 
\\
& \propto \int_{0}^\infty \int_{-\infty}^\infty \rho_{\mu}(z | \vf = \vy, m = \xi, s^2 = \lambda)  \rho_{\vf}(\vy | \xi, \lambda ) \rho_{m, s^2}(\xi, \lambda) \, \D \xi \D \lambda \\
&\qquad \qquad \qquad \qquad \text{by Bayes' Theorem} 
\\
& \propto \displaystyle \int_{0}^\infty  \frac{g(\lambda)}{\lambda^{(n+1)/2}} 
\int_{-\infty}^\infty  \exp \biggl( -\frac{1}{2\lambda}\biggl\{
\frac{
	[z - \xi (1 - \vc^T \mC^{-1} \vone)  -  \vc^T \mC^{-1} \vy]^2}
{c_0  -\vc ^T \mC^{-1} \vc}  \\
& \qquad \qquad \qquad  + (\vy - \xi \vone)^T \mC^{-1}(\vy - \xi \vone) \biggr \} \biggr) \, \D \xi \D \lambda \\
&\qquad \qquad
\text{by \eqref{eqn:fGaussDist}, \eqref{eqn:condInteg}} \; \text{and} \; \rho_{m,s^2}(\xi,\lambda) \propto g(\lambda) \\
& \propto \displaystyle \int_{0}^\infty  \frac{g(\lambda)}{\lambda^{(n+1)/2}} \int_{-\infty}^\infty  \exp\left( -\frac{\alpha \xi^2 -2 \beta \xi + \gamma}{2\lambda(c_0  -\vc ^T \mC^{-1} \vc)} \right) \, \D \xi \D \lambda, \\
\intertext{where}
\alpha & = (1 - \vc^T \mC^{-1} \vone)^2 + \vone^T \mC^{-1} \vone (c_0  -\vc ^T \mC^{-1} \vc),\\
\beta & =(1 - \vc^T \mC^{-1} \vone)(z - \vc^T \mC^{-1} \vy )
+ \vone^T \mC^{-1} \vy (c_0  -\vc ^T \mC^{-1} \vc),\\
\gamma &  = (z - \vc^T \mC^{-1} \vy )^2  + \vy^T \mC^{-1} \vy (c_0  -\vc ^T \mC^{-1} \vc).
\end{align*}
In the derivation above and below, factors that are independent of $\xi$, $\lambda$, or $z$ can be discarded since we only need to preserve the proportion.  But, factors that depend on $\xi$, $\lambda$, or $z$ must be kept.  
%Completing the square allows us to compute the integral with respect to $\xi$:
Completing the square $
\alpha \xi^2 -2 \beta \xi + \gamma 
= \alpha (\xi -\beta/\alpha)^2  - (\beta^2/\alpha) + \gamma,
$
allows us to evaluate the integrals with respect to $\xi$ and $\lambda$:
\begin{align*}
\MoveEqLeft{\rho_{\mu}(z | \vf = \vy)} &
\\
& \propto \displaystyle \int_{0}^\infty  \frac{g(\lambda)}{\lambda^{(n+1)/2}}  \exp\left( -\frac{  \gamma - \beta^2/\alpha}{2\lambda(c_0  -\vc ^T \mC^{-1} \vc)} \right)  \cdots 
\\
& \qquad \qquad \cdots \int_{-\infty}^\infty  \exp\left( -\frac{\alpha (\xi -\beta/\alpha)^2}{2\lambda(c_0  -\vc ^T \mC^{-1} \vc)} \right) \, \D \xi \D \lambda 
\\
& \propto \displaystyle \int_{0}^\infty  \frac{g(\lambda)}{\lambda^{n/2}}  \exp\left( -\frac{  \gamma - \beta^2/\alpha}{2\lambda(c_0  -\vc ^T \mC^{-1} \vc)} \right) \D \lambda 
.
\end{align*}
This can be interpreted as Laplace transform of $g(\lambda)$,
\begin{align*}
{\rho_{\mu}(z | \vf = \vy)} 
& \propto \displaystyle \int_{0}^\infty  \frac{g(\lambda)}{\lambda^{n/2}}  \exp\left( -\frac{  \gamma - \beta^2/\alpha}{2\lambda(c_0  -\vc ^T \mC^{-1} \vc)} \right) \D \lambda 
\\
& \propto \int_{0}^\infty \frac{g(\lambda)}{\lambda^{n/2}}
\exp \left(  - \frac{1}{\lambda} \chi \right)
\dif{\lambda}, 
\\
& \quad \text{where} \quad \chi = \frac{  \gamma - \beta^2/\alpha}{2(c_0  -\vc ^T \mC^{-1} \vc)} 
\propto { 1 +  \frac{(z - \hmu_{\textup{full}})^2}{(n-1)\widehat{\sigma}_{\textup{full}}^2} }.
\end{align*}
Let $\displaystyle \lambda = \frac{1}{w}, \quad \dif{\lambda} = -w^{-2} \dif{w}$ then,
\begin{align*}
{\rho_{\mu}(z | \vf = \vy)} 
& \propto \int_{0}^\infty \frac{g(\lambda)}{\lambda^{n/2}}
\exp \left(  - \frac{1}{\lambda} \chi \right)
\dif{\lambda} 
\\
&= \int_{0}^\infty \frac{g(1/w)  }{w^{-n/2}}
\exp \left(  - w \chi \right)
(-w^{-2})\dif{w}
\\
&= \int_\infty^0 -g(1/w) w^{\frac n2 - 2}
\exp \left(  - w \chi \right)
\D{w}
\\
&= \int_0^\infty g(1/w) w^{\frac{n-4}2}
\exp \left(  - w \chi \right)
\D{w}
\\
% & = \mathcal{LT} \{ g(1/\chi) \}^{(\frac{n-4}2)'},
& = \mathcal{LT} \{ g(1/\cdot) \}^{(\frac{n-4}2)}\left(\chi\right),
\end{align*}
where $\mathcal{LT}(\cdot)$ denotes the Laplace transform and $(\frac{n-4}2)$ indicates the $\frac{n-4}2$th derivative taken after the transform. Here we used frequency domain derivative property of the Laplace transform. 
The above result can be further simplified by replacing $\gamma - \beta^2/\alpha$ from  \eqref{eqn:full_bayes_student_short},
\begin{align*}
{\rho_{\mu}(z | \vf = \vy)} & \propto \mathcal{LT} \{ g(1/\cdot) \}^{(\frac{n-4}2)} \left(\chi \right) \\
& \propto
\mathcal{LT} \left\{ g({1}/{\cdot})
 \right\}^{(\frac{n-4}2)} \left( 1 +  \frac{(z - \hmu_{\textup{full}})^2}{(n-1)\widehat{\sigma}_{\textup{full}}^2}\right) \quad \text{by} \quad \eqref{eqn:full_bayes_student_short}.
\end{align*}
Thus, $\rho_{\mu}(z | \vf = \vy)$  is proportional to $(\frac{n-4}{2})$th derivative of the Laplace transform of $g(1/\cdot)$ evaluated at $\chi$, where $\chi \propto { 1 +  \frac{(z - \hmu_{\textup{full}})^2}{(n-1)\widehat{\sigma}_{\textup{full}}^2} } $.

We demonstrate the general prior with the non-informative conjugate that we used above, i.e., if $\displaystyle g(1/\lambda) = {\lambda}$ then, 
\begin{align*}
{\rho_{\mu}(z | \vf = \vy)} 
&= \int_{0}^\infty g(1/w)  w^{\frac n2 -2}
\exp \left(  - w \chi \right)
\D w
\\
& = \displaystyle \left(\mathcal{LT}(g(1/t)) \right)^{(\frac n2 -2)} \lvert_{t=\chi}
\; = \; \displaystyle \left(\mathcal{LT}(t) \right)^{(\frac n2 -2)} \lvert_{t=\chi} 
\\
& = \left(1/u^{2} \right)^{(\frac n2 -2)} \lvert_{u=\chi}
\\
& \propto \chi^{-n/2} = \left( \frac{  \gamma - \beta^2/\alpha}{2(c_0  -\vc ^T \mC^{-1} \vc)} \right)^{-n/2}
\\
& \propto \left(\gamma - \frac{\beta^2}{\alpha}\right)^{-n/2}
\\
& \propto \left(\alpha \gamma - \beta^2\right)^{-n/2},
\end{align*}
where we used the fact that the Laplace transform of $g(1/t) = t$ is $1/u^2$. 
% https://en.wikipedia.org/wiki/Laplace_transform: LT of ramp function tu(t)
After the transform, taking $(\frac n2 -2)$th derivative gives us the result. This shows when using a generic prior, it leads to a posterior of the form
$ {\rho_{\mu}(z | \vf = \vy)}  \propto  \mathcal{LT} \{ g(1/\cdot) \}^{(\frac{n-4}2)} \left(\chi\right) $ with full Bayes approach, i.e, the posterior $\rho_{\mu}(z | \vf = \vy)$ is a function of ${ 1 +  \frac{(z - \hmu_{\textup{full}})^2}{(n-1)\widehat{\sigma}_{\textup{full}}^2} }$.

Our motivation to experiment with the general prior was to show that it may be possible to infer the prior from the integrand samples. We demonstrated it with the non-informative prior, which shows the possibility to compute the prior from function values. Obtaining an arbitrary prior from the integrand samples is the topic of future work.

%%%%%%%%%%%%%%%%%%%%%%%%%%%%%%%%%%%%%%%%%%%%%%%%%%%%%%%%%%%%%%%%%%%%%%%%%%%%%%%%%%%%%

\Section{Generalized Cross-Validation} \label{sec:GCV}

A third parameter optimization technique is \emph{leave-one-out cross-validation} (CV).  Let $\widetilde{y}_i = \Ex[f(\vx_i ) | \vf_{-i} = \vy_{-i}]$, where the subscript $-i$ denotes the vector excluding the $i^{\text{th}}$ component.  This is the conditional expectation of $f(\vx_i )$ given all data but the function value at $\vx_i$.  The cross-validation criterion, which is to be minimized, is sum of squares of the difference between these conditional expectations and the observed values:
\begin{equation} \label{FJH:eq:CVA}
\textup{CV} = \sum_{i=1}^n (y_i - \widetilde{y}_i)^2.
\end{equation}

Let $\mA = \mC^{-1}$, let $\vzeta = \mA (\vy - m \vone)$, and partition $\mC$, $\mA$, and $\vzeta$ as
\begin{gather*}
\mC = \begin{pmatrix} c_{ii}  & \vC_{-i,i}^T \\  \vC_{-i,i} & \mC_{-i,-i}\end{pmatrix}, \qquad
\mA = \begin{pmatrix} a_{ii}  & \vA_{-i,i}^T \\  \vA_{-i,i} & \mA_{-i,-i}\end{pmatrix}, \qquad \vzeta = \begin{pmatrix} \zeta_i   \\  \vzeta_{-i} \end{pmatrix},
\end{gather*}
where the subscript $i$ denotes the $i^{\text{th}}$ row or column, and the subscript $-i$ denotes all rows or columns except the $i^{\text{th}}$. Following this notation, Lemma \ref{thrm:condDist} implies that 
\begin{align*}
\widetilde{y}_i & = m + \vC^T_{-i,i} \mC_{-i,-i}^{-1} (\vy_{-i} -m \vone)  \\
\zeta_i  & = a_{ii}(y_i - m) + \vA_{-i,i}^T(\vy_{-i} - m \vone) \\
& = a_{ii}[(y_i - m) - \vC^T_{-i,i} \mC_{-i,-i}^{-1} (\vy_{-i} -m \vone)] \\
& = a_{ii}(y_i - \widetilde{y}_i).
\end{align*}
Thus, \eqref{FJH:eq:CVA} may be re-written as 
\begin{equation*} %\label{FJH:eq:CVB}
\textup{CV} = \sum_{i=1}^n \left(\frac{\zeta_i}{a_{ii}} \right)^2, \quad 
\text{where} \quad \vzeta = \mC^{-1}(\vy - m \vone).
\end{equation*}
The \emph{generalized cross-validation} criterion (GCV) replaces the $i^{\text{th}}$ diagonal element of $\mA$ in the denominator by the average diagonal element of $\mA$ \cite{CraWah79a,GolHeaWah79a,Wah90}:
\begin{align*} 
\textup{GCV} &
= \frac{\sum_{i=1}^n\zeta_i^2}{\left(\frac 1n \sum_{i=1}^n a_{ii} \right)^2} 
= \frac{(\vy - m\vone)^T \mC^{-2} (\vy - m \vone)}{\left(\frac 1n \trace(\mC^{-1}) \right)^2}.
\end{align*}

The loss function $\textup{GCV}$ depends on $m$ and $\vtheta$, but not on $s$.  Minimizing the GCV  yields
\begin{equation*}
m_{\textup{GCV}} = \frac{\vone^T \mC^{-2} \vy}{\vone^T \mC^{-2} \vone},\\ 
\end{equation*}
\begin{align*}
% \label{vthetaGCV}
\vtheta_{\textup{GCV}} = \argmin_\vtheta \biggl\{\log \left(  \vy^T \left[\mC^{-2} - \frac{\mC^{-2} \vone \vone^T \mC^{-2}}{\vone^T \mC^{-2} \vone}  \right] \vy \right)  
- 2 \log \left ( \trace(\mC^{-1}) \right ) \biggr\} .
\end{align*}
Plugging this value of $m$ into \eqref{eqn:BayesCub} yields
\begin{equation*}
% \label{eqn:muCV}
\widehat{\mu}_{\textup{GCV}}
= \left(\frac{(1 - \vone^T  \mC^{-1}\vc) \mC^{-1} \vone}{\vone^T \mC^{-2} \vone} + \vc \right)^T \mC^{-1} \vy.
\end{equation*}

An estimate for $s$ may be obtained by noting that by Lemma \ref{thrm:condDist},
\begin{align*}
\var[f(\vx_i ) | \vf_{-i} = \vy_{-i}] = s^2 a_{ii}^{-1}.
\end{align*}
Thus, we may estimate $s_\GCV$ using an argument similar to that used in deriving the GCV and then substituting $m_{\textup{GCV}}$ for $m$:
\begin{align*}
s^2 &= \var[f(\vx_i ) | \vf_{-i} = \vy_{-i}] a_{ii} \\ 
& \approx \frac 1n \sum_{i=1}^n (y_i - \widetilde{y}_i)^2a_{ii}  = \frac 1n \sum_{i=1}^n \frac{\zeta_i^2}{a_{ii}} \\ 
& \approx \frac{ \frac 1n \sum_{i=1}^n \zeta_i^2}{\frac 1n \sum_{i=1}^n a_{ii} } 
 = \frac{(\vy - m\vone)^T \mC^{-2} (\vy - m \vone)}{ \trace(\mC^{-1}) } \; =: \; s^2_{\textup{GCV}}. \\
% \intertext{where}
% s^2_{\textup{GCV}} & : = \vy^T \left[\mC^{-2} - \frac{\mC^{-2} \vone \vone^T \mC^{-2}}{\vone^T \mC^{-2} \vone}  \right] \vy  \left[ \trace(\mC^{-1}) \right]^{-1}. 
\end{align*}

The confidence interval based on generalized cross-validation corresponds to \eqref{eqn_prob_confidence_interval} with the GCV estimates for $m$, $s$, and $\vtheta$:
\begin{align}
\label{GCVerr}
\err_{\textup{GCV}} = 2.58 s_{\textup{GCV}}  \sqrt{c_0 - \vc^T\mC^{-1}\vc}, \\
\label{GCVCI}
\mathbb{P}_f \left[
|\mu-\hmu_{\textup{GCV}}| \leq \err_{\textup{GCV}} 
\right] = 99\%.
\end{align}

%%%%%%%%%%%%%%%%%%%%%%%%%%%%%%%%%%%%%%%%%%%%%%%%%%%%%%%%%%%%%%%%%
The methods developed for hyperparameter estimation from the previous sections are summarized as a theorem below:
\begin{theorem} \label{thm:param} There are at least three approaches to estimating or integrating out the hyperparameters defining the Gaussian process from which the integrand is drawn: empirical Bayes, full Bayes, and generalized cross-validation.  %\linebreak[4]
Under these three approaches, we have the following:
	\begin{align}
	\label{eqn_m_MLE}
	m_\MLE &= \frac{\vone^T \mCthetaInv \vy }{ \vone^T \mCthetaInv \vone}, \qquad
	m_{\textup{GCV}} = \frac{\vone^T \mC_\vtheta^{-2} \vy}{\vone^T \mC_\vtheta^{-2} \vone}, \\
	\label{eqn_s2_MLE}
	s^2_{\MLE} 
	&= 
	\frac{1}{n}
	\vy^T 
	\left[ \mCthetaInv - 
	\frac{ \mCthetaInv \vone \vone^T \mCthetaInv }{\vone^T\mCthetaInv \vone}
	\right] \vy, \\
	\nonumber
	\hsigma_{\full}^2 
	& = \frac{1}{n-1}
	\vy^T\left[ \mCthetaInv 
	- \frac{ \mCthetaInv \vone\vone^T \mCthetaInv}{\vone^T \mCthetaInv \vone}  \right]\vy
	\\ 
	\label{eqn:sigma2_full}
	& \times  \left[\frac{(1 - \vc^T \mCthetaInv \vone)^2}{\vone^T \mCthetaInv \vone} + (c_{0}  -\vc ^T \mCthetaInv \vc) \right], \\
	\nonumber
	s^2_{\textup{GCV}} & = \vy^T \left[\mC_\vtheta^{-2} - \frac{\mC_\vtheta^{-2} \vone \vone^T \mC_\vtheta^{-2}}{\vone^T \mC_\vtheta^{-2} \vone}  \right] \vy  \left[ \trace(\mCthetaInv) \right]^{-1}, \\
	\label{eqn:thetaMLE}
	\vtheta_\MLE
	&= \argmin_{\vtheta} \biggl \{
	\log\left(\vy^T 
	\left[ \mCthetaInv - 
	\frac{ \mCthetaInv \vone \vone^T \mCthetaInv }{\vone^T\mCthetaInv \vone}
	\right] \vy 
	\right)  
	+  \frac{1}{n} \log(\det(\mC_\vtheta))
	\biggr \}, \\
	\label{vthetaGCV}
	\vtheta_{\GCV} &= \argmin_\vtheta \biggl\{\log \left(  \vy^T \left[\mC^{-2}_\vtheta - \frac{\mC^{-2}_\vtheta \vone \vone^T \mC^{-2}_\vtheta}{\vone^T \mC^{-2}_\vtheta \vone}  \right] \vy \right)  
	- \log \left ( \trace(\mC^{-2}_\vtheta) \right ) \biggr\}, \\
	\label{eqn:cubMLE}
	\hmu_\MLE  &= \hmu_\full =
	\left(
	\frac{ (1 - \vone^T  \mCthetaInv\vc )  \vone }{ \vone^T \mCthetaInv \vone}   +  \vc 
	\right)^T  \mCthetaInv \vy, \\
	\label{eqn:muCV}
	\hmu_{\GCV}
	& = \left(\frac{(1 - \vone^T  \mCthetaInv\vc) \mCthetaInv \vone}{\vone^T \mC_\vtheta^{-2} \vone} + \vc \right)^T \mCthetaInv \vy.
	\intertext{The credible intervals widths, $\err_{\CI}$, are given by}
	\label{eqn:err_MLEGCV}
	\err_{\mathsf{x}} & = 2.58 s_{\mathsf{x}} \sqrt{c_{0} - \vc^T\mCthetaInv\vc }, \qquad \mathsf{x} \in \{\MLE, \GCV\},  \\ 
	\err_{\textup{full}} 
	& = t_{n-1,0.995} \hsigma_{\textup{full}} > \err_\MLE. \label{FJH:eq:errFull}
	\end{align}
	The resulting credible intervals are then
	\begin{align}
	\label{eqn_prob_CI}
	\mathbb{P}_f \left[
	|\mu-\hmu_{\mathsf{x}}| \leq \err_{\mathsf{x}} \right]  = 99\%, \qquad \mathsf{x} \in \{\MLE, \full, \GCV\}.
	\end{align}
	Here $t_{n-1,0.995}$ denotes the $99.5$ percentile of a standard Student's $t$-distribution with $n-1$ degrees of freedom.  In the formulas above, $\vtheta$ is assumed to take on the values $\vtheta_{\MLE}$ or $\vtheta_{\GCV}$ as appropriate.
\end{theorem}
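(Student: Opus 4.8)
The plan is to obtain Theorem~\ref{thm:param} simply by collecting the three derivations already carried out in Section~\ref{sec:MLE} (empirical Bayes), the ensuing full‑Bayes section, and Section~\ref{sec:GCV} (generalized cross‑validation), recording which displayed formula each claimed identity comes from, and then checking the one comparison, $\err_\full>\err_\MLE$ in \eqref{FJH:eq:errFull}, that is not purely mechanical. \emph{Empirical Bayes:} starting from the Gaussian log‑likelihood $l(s,m,\vtheta\mid\vy)$ of Section~\ref{sec:MLE}, maximize successively — for fixed $s,\vtheta$ the maximization over $m$ is an unconstrained quadratic (generalized least squares) problem with solution $m_\MLE$ of \eqref{eqn_m_MLE}; resubstituting and maximizing over $s$ gives the empirical‑variance expression, which simplifies to \eqref{eqn_s2_MLE} using $m_\MLE$; resubstituting both leaves the profile objective \eqref{eqn:thetaMLE} in $\vtheta$ alone. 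Inserting $m_\MLE$ into the posterior mean \eqref{eqn:BayesCub} gives $\hmu_\MLE$ of \eqref{eqn:cubMLE}, and inserting $m_\MLE,s_\MLE$ into \eqref{eqn_prob_confidence_interval} gives $\err_\MLE$ of \eqref{eqn:err_MLEGCV}; the credible statement \eqref{eqn_prob_CI} for $\mathsf{x}=\MLE$ is then \eqref{eqn:condInteg} evaluated at $\vtheta=\vtheta_\MLE$, $m=m_\MLE$, $s=s_\MLE$.

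\emph{Full Bayes:} with the non‑informative conjugate prior $\rho_{m,s^2}(\xi,\lambda)\propto1/\lambda$, integrate out $\xi=m$ (a Gaussian integral after completing the square in $\xi$) and then $\lambda=s^2$ (a power‑law integral), exactly as in that section; the decisive algebraic step is the collapse of $\alpha\gamma-\beta^2$ to the form \eqref{eqn:full_bayes_student_short}, which exhibits $\mu\mid(\vf=\vy)$ as a location–scale Student's $t$ with $n-1$ degrees of freedom, location $\hmu_\full=\hmu_\MLE$ and squared scale $\hsigma^2_\full$ of \eqref{eqn:sigma2_full}, so \eqref{eqn_prob_CI} for $\mathsf{x}=\full$ holds with $\err_\full=t_{n-1,0.995}\hsigma_\full$. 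For the strict inequality, read $2.58$ as the exact quantile $z_{0.995}=\Phi^{-1}(0.995)$, so $t_{n-1,0.995}>z_{0.995}$; and from \eqref{eqn_s2_MLE}, \eqref{eqn:sigma2_full} one has $\hsigma^2_\full=\tfrac{n}{n-1}\,s^2_\MLE\bigl[\tfrac{(1-\vc^T\mCthetaInv\vone)^2}{\vone^T\mCthetaInv\vone}+(c_0-\vc^T\mCthetaInv\vc)\bigr]>s^2_\MLE\,(c_0-\vc^T\mCthetaInv\vc)$ since $\vone^T\mCthetaInv\vone>0$ and $\tfrac n{n-1}>1$; multiplying the two strict inequalities, and using $c_0-\vc^T\mCthetaInv\vc>0$ because $\mCtheta$ is positive definite, yields $\err_\full>\err_\MLE$.

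\emph{Generalized cross‑validation:} from the leave‑one‑out criterion \eqref{FJH:eq:CVA}, the block‑inverse identities of Lemma~\ref{thrm:condDist} rewrite the conditional mean $\widetilde{y}_i$ and the residual as $y_i-\widetilde{y}_i=\zeta_i/a_{ii}$ with $\vzeta=\mCthetaInv(\vy-m\vone)$; replacing $a_{ii}$ by the mean diagonal $\tfrac1n\trace(\mCthetaInv)$ produces the GCV loss, whose minimizer in $m$ is $m_\GCV$ of \eqref{eqn_m_MLE} and whose $\vtheta$‑profile is \eqref{vthetaGCV}; the scale estimate $s^2_\GCV$ comes from $\var[f(\vx_i)\mid\vf_{-i}=\vy_{-i}]=s^2a_{ii}^{-1}$ (again Lemma~\ref{thrm:condDist}) with the same averaging heuristic and $m\mapsto m_\GCV$. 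Inserting $m_\GCV$ into \eqref{eqn:BayesCub} and $m_\GCV,s_\GCV$ into \eqref{eqn_prob_confidence_interval} gives $\hmu_\GCV$ of \eqref{eqn:muCV} and $\err_\GCV$ of \eqref{eqn:err_MLEGCV}, and \eqref{eqn_prob_CI} for $\mathsf{x}=\GCV$ is \eqref{eqn:condInteg} with the GCV hyperparameters. Since every ingredient is already at hand, the only genuinely nontrivial work is the full‑Bayes algebra — the two‑variable completion of the square and the simplification of $\alpha\gamma-\beta^2$ to \eqref{eqn:full_bayes_student_short} — which is exactly the computation performed above; that, together with the quantile/scale comparison giving \eqref{FJH:eq:errFull}, is the main obstacle, and both are already dispatched.
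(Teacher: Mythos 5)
Your proposal is correct and takes essentially the same route as the paper: Theorem~\ref{thm:param} is explicitly a summary of the empirical Bayes, full Bayes, and GCV derivations in the preceding sections, and you assemble exactly those pieces, with the credible intervals obtained by plugging the estimated hyperparameters into \eqref{eqn:condInteg} and \eqref{eqn_prob_confidence_interval} just as the paper does. Your explicit check of the inequality $\err_{\full}>\err_\MLE$ (comparing $t_{n-1,0.995}$ with the normal quantile and $\hsigma^2_{\full}$ with $s^2_\MLE(c_0-\vc^T\mCthetaInv\vc)$ via the nonnegative extra term and the factor $n/(n-1)$) is a small but sound supplement to a claim the paper states without detail.
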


In the theorem above, note that if the original covariance kernel, $C$, is replaced by $b C$ for some positive constant $b$, the cubature, $\hmu$, the estimates of $\vtheta$, and the credible interval half-widths, $\err_{\mathsf{x}}$ for $\mathsf{x} \in \{\MLE, \full, \GCV\}$, all remain unchanged.  The estimates of $s^2$ are multiplied by $b^{-1}$, as would be expected. 

% Looking back over the results of Sections \ref{sec:MLE}--\ref{sec:GCV}, it is noted that if the original covariance function, $C$, is replaced by $b C$ for some positive constant $b$, the cubature, $\hmu$, the estimates of $\vtheta$, and the credible interval widths, $\err_{\MLE}, \err_{\full}, \err_{\GCV}$, all remain unchanged.  The estimates of $s^2$ are multiplied by $b^{-1}$, as would be expected.

%%%%%%%%%%%%%%%%%%%%%%%%%%%%%%%%%%%%%%%%%%%%%%%%%%%%%%%%%%%%%%%%%%%%%%

\Section{Cone of Functions and the Credible interval} % Functions, F must be captial -- Fred
\label{sec:cone_of_functions}

%\JRNote{Done: Revise $f_{\NICE}$ using kernel approximation}

In this research we assume that the integrand belongs to a cone of well-behaved functions, $\mathscr{C}$, to make the computations bounded in terms of function data. The concept of cone in general for cubature error analysis can be stated using the error bound definition. 
Suppose that 
\begin{align}
\label{eqn:cone-homogenity}
\abs{\mu(f) - \hmu_n(f)} \le \err_{\CI} (f(\vx_1), \cdots, f(\vx_n))
\end{align}
for some $f$, which it is 99\% of the time under our hypothesis. Also note that our $\err_{\CI}$ \eqref{eqn:err_MLEGCV} \eqref{FJH:eq:errFull} are positively homogeneous functions, meaning, 
\begin{align*}
\err_{\CI} (a y_1, \cdots, a y_n) = \abs{a} \err_{\CI} ( y_1, \cdots,  y_n).
\end{align*}
One can verify the homogeneity of \eqref{eqn:err_MLEGCV} and \eqref{FJH:eq:errFull} easily.
Thus if $f$ satisfies \eqref{eqn:cone-homogenity}, then
\begin{align*}
\abs{\mu(af) - \hmu_n(af)} &= \abs{a} \abs{\mu(f) - \hmu_n(f)} \\
& \le \abs{a} \err_{\CI} (f(\vx_1), \cdots, f(\vx_n) ) \\
& = \err_{\CI} (a f(\vx_1), \cdots, a f(\vx_n))
\end{align*}
for all real $a$. Thus the set of all $f$ satisfying \eqref{eqn:cone-homogenity} is a \emph{cone}, $\mathscr{C}$. Cones of functions satisfy the property that if $f \in \mathscr{C}$ then $af \in \mathscr{C}$.

\iffalse
Given the data driven error bound $\err_{\CI} = \err_{\CI}(f(\vx_1),\dots,f(\vx_n))$,
\begin{align*}
\abs{\mu(f) - \hmu_n(f)} \le \err_{\CI} (f(\vx_1), \dots, f(\vx_n)), \quad \forall f \in \mathscr{C}
\end{align*}  
where the cone of functions is defined as $\mathscr{C} = \{f : f \in \mathscr{C} \Rightarrow a f \in \mathscr{C} \}$. More specifically, if a function $f$ belongs to $\mathscr{C}$, then any function obtained by a constant multiplication, $af$, also belongs to the cone. 
This property is very useful since,
\begin{align*}
\abs{\mu(a f) - \hmu_n(a f)} & = \abs{a} \abs{\mu(f) - \hmu_n(f)} \\
& \le \abs{a} \err_{\CI} (f(\vx_1), \dots, f(\vx_n)) \\
& = \err_{\CI} (a f(\vx_1), \dots, a f(\vx_n)).
\end{align*}
This is due to the fact that the error bound $\err_{\CI}$ is homogeneous, i.e., 
\begin{align*}
\err_{\CI}(a f(\vx_1),\dots, a f(\vx_n)) = \abs{a} \err_{\CI}(f(\vx_1),\dots, f(\vx_n)).
\end{align*}
If the integrand is multiplied by a constant, say $a=45$, then the integral error is also multiplied by the same constant $a=45$, but the factor $\err_{\CI}$ which depends on the characteristic of $f$ does not change.
\fi

In the context of Bayesian cubature, one can explain the cone concept beginning with the definition of credible interval \eqref{eqn_prob_confidence_interval}. 
Let $f \sim \mathcal{GP}$, be an instance of a Gaussian stochastic process:
\begin{align*}
\mathbb{P}_f \left[
|\mu(f)-\hmu_n(f)| \leq \err_{\CI}(f) \right] \ge 99\%.
\end{align*}
This can be interpreted as $|\mu(f)-\hmu_n(f)| \leq \err_{\CI}(f)$ with 99\% confidence. If $f$ is in the 99\% middle of the sample space with $f(\vx_i) = y_i$ then $af$ is also in the middle 99\% of the sample space with $a f(\vx_i) = a y_i$.

We demonstrate the credible interval using the following example. For this purpose, choose a smooth and periodic integrand $f_{\TRUE}(\vx) = \exp(\sum_{\ell=1}^{d} \cos(2\pi x_\ell))$ and another integrand $f_{\PEAKY}(\vx) = f_{\TRUE} + a_{\PEAKY} f_{\NOISE}$ where $a_{\PEAKY} \in \reals$. 
% Here $a_{\PEAKY} \in \reals, a_{\NICE} \in \reals$, and 
Here $f_{\NOISE}(\vx) = (1 - \exp(2\pi \sqrt{-1} \vx^T \vzeta))$, $\vzeta \in \reals^d$ is some $d$-dimensional vector belonging to the dual space of the lattice nodes for some $\{ \vx_i\}_{i=1}^n$. %chosen to sample the integrand. 
The $\vzeta$ in the dual space of lattice nodes implies that $f_{\NOISE}(\vx_i)=0$ at the sampling nodes $\{ \vx_i\}_{i=1}^n$.
The $f_{\NICE}$ is obtained by kernel interpolation of the $n$ samples of $f_\TRUE$ at $\{ \vx_i\}_{i=1}^n$. We chose the Mat\'ern kernel \eqref{eqn:matern_kernel} for the interpolation.
Please note that $f_{\PEAKY}(\vx_i) = f_{\NICE}(\vx_i) = f_{\TRUE}(\vx_i) $ for $i=1, \cdots, n$.

\begin{figure}[ht]
	\centering
	%\begin{subfigure}[h]{0.48\linewidth}
	\includegraphics[width=0.8\linewidth]{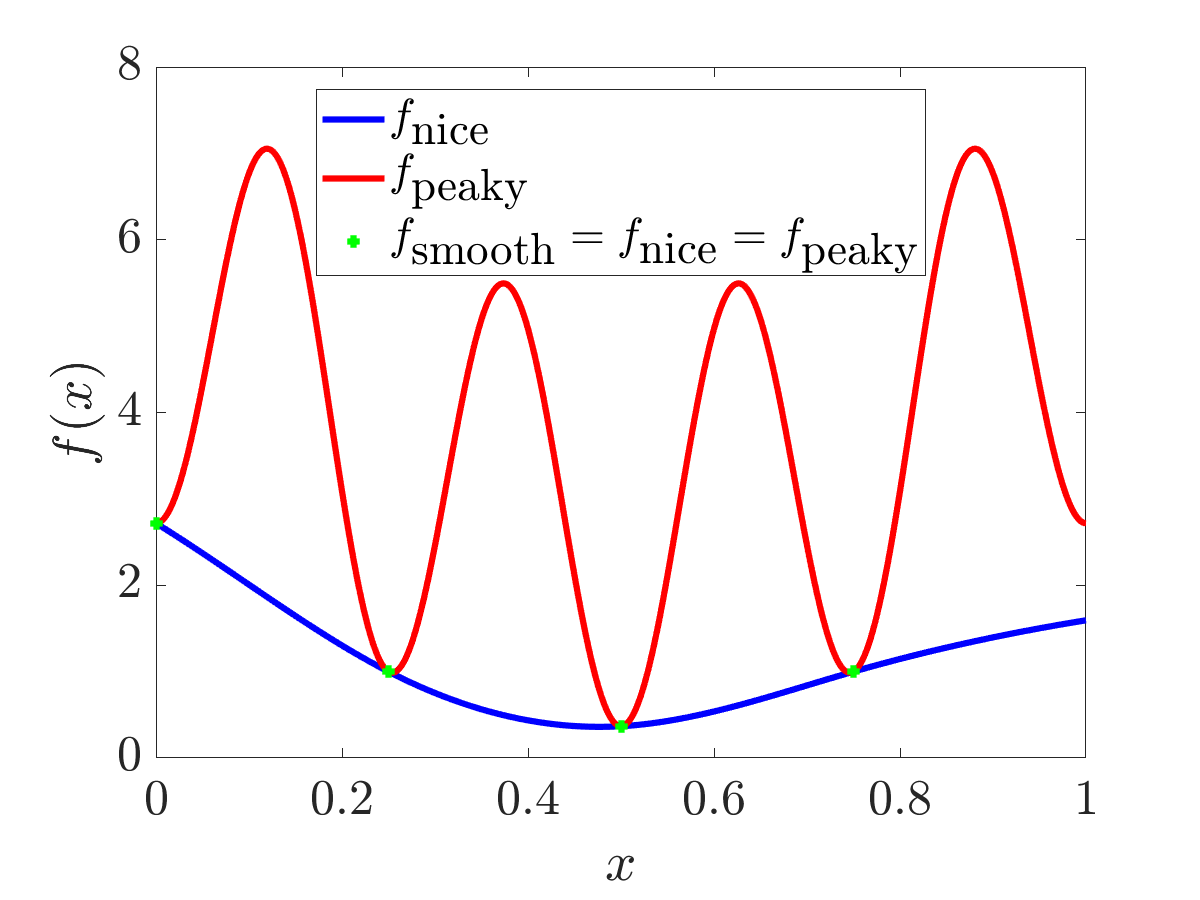}
	%\end{subfigure}
	\caption{Example integrands 1) $f_{\NICE}$, a smooth function, 2) $f_{\PEAKY}$, a peaky function. The function values $f_{\PEAKY}(\vx_i) = f_{\NICE}(\vx_i) = f_{\TRUE}(\vx_i) $ for $i=1, \cdots, n$. This plot can be conditionally reproduced using \code{DemoCone.m}}
	\label{fig:cone_bayes_functions}
\end{figure}
\begin{figure}[ht]
	\centering
	%\begin{subfigure}[h]{0.48\linewidth}
	\includegraphics[width=0.8\linewidth]{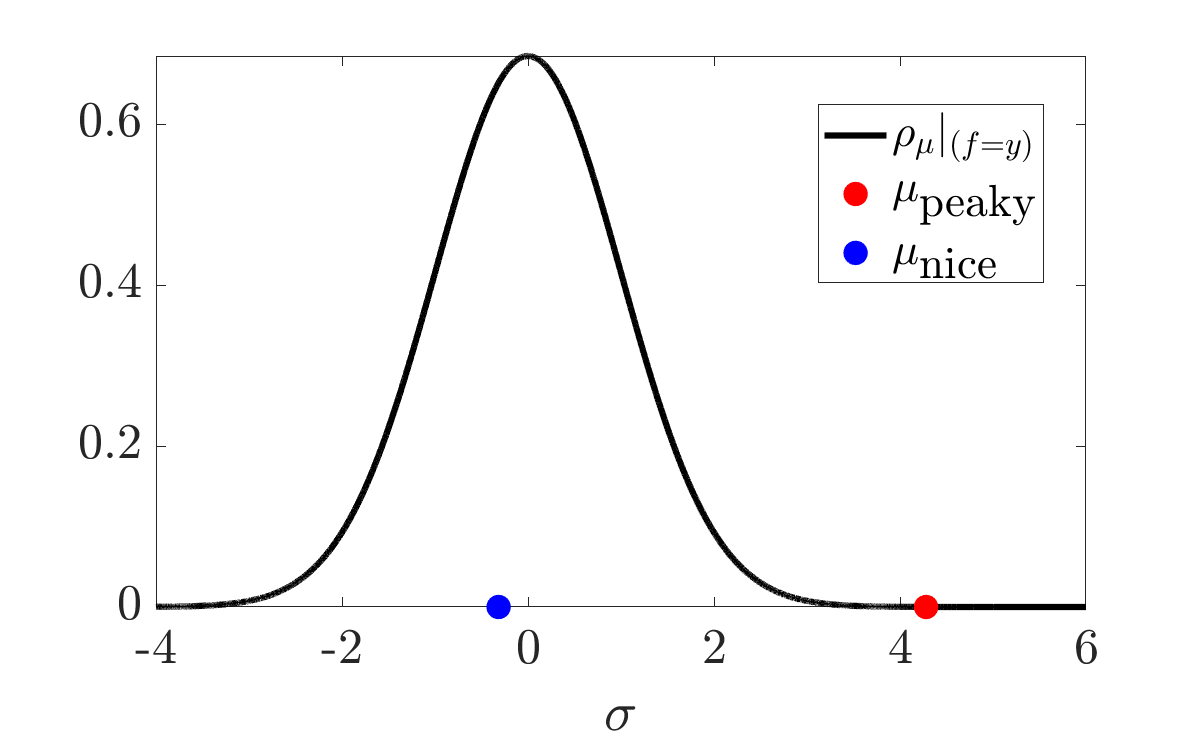}
	%\end{subfigure}
	\caption{Probability distributions showing the relative integral position of a smooth and a peaky function. $f_{\NICE}$ lies within the center 99\% of the confidence interval, and $f_{\PEAKY}$ lies on the outside of 99\% of  the confidence interval. This plot can be conditionally reproduced using \code{DemoCone.m}}
	\label{fig:cone_bayes_posterior}
\end{figure}
% Done: \JRNote{Use standard deviation $\sigma$ in x-axis.  $f_{\NICE}$ using kernel approximation} 

In \figref{fig:cone_bayes_functions}, the sampled function values
% , $\{f_{\TRUE}(\vx_i)\}_{i=1}^n$, from a smooth integrand $f_{\TRUE}$ 
are shown as dots. One can imagine these samples % $\{f(\vx_i)\}_{i=1}^n$ 
were obtained from $f_{\NICE}$, a moderately smoother function or from $f_{\PEAKY}$, a highly oscillating function. In this example, we used $a_{\PEAKY} = 2$.

When using $n=16$ rank-1 lattice points, and $r=1$ shift-invariant kernel, we get the posterior distribution of $\mu$ as shown in \figref{fig:cone_bayes_posterior}. The true integral value is shown as $\mu_{\TRUE}$ which is at the center of the plot. The integral of the peaky function $f_{\PEAKY}$ lies outside of the 99\% of the credible interval given by \eqref{eqn_prob_CI_MLE}, whereas the $\mu_{\NICE}$ falls within.

Our Bayesian cubature algorithms compute the approximate integral using only the samples of the integrand. 
Estimated integral value of our algorithm closely matches the integral of a smooth function that falls within the middle of the confidence interval. If the true integrand were to resemble the smooth approximate function then the estimated integral will be accurate.

\Section{The Automatic Bayesian Cubature Algorithm} 
\label{sec:bayes_cubature_algo}

The previous section presents three credible intervals, \eqref{eqn_prob_CI_MLE}, \eqref{eqn_prob_CI_full}, and \eqref{GCVCI}, for the $\mu$, the desired integral.  Each credible interval is based on different assumptions about the hyperparameters $m$, $s$, and $\vtheta$.  We stress that one must estimate these hyperparameters or assume a prior distribution on them because the credible intervals are used as stopping criteria for our cubature rule.  Since a credible interval makes a statement about a typical function---not an outlier---one must try to ensure that the integrand is a typical draw from the assumed Gaussian process.

Our  Bayesian cubature algorithm increases the sample size until the width of the credible interval is small enough.  This is accomplished through successively doubling the sample size.  The steps are detailed in Algorithm~\ref{algorithm1}.

We recognize that multiple applications of our credible intervals in one run of the algorithm is not strictly justified.  However, if our integrand comes from the middle of the sample space and not the extremes, we expect our automatic Bayesian cubature to approximate the integral within the desired error tolerance with high probability.  The example in the next section and the examples in Chapter~\ref{sec:NumExp} support that expectation. We also believe that an important factor contributing to the occasional failure of our algorithm is unreasonable parameterizations of the stochastic process from which the integrand is hypothesized to be drawn.  Overcoming this latter challenge is a topic for future research.

\algnewcommand{\IIf}[1]{\State\algorithmicif\ #1\ \algorithmicthen\ }
\algnewcommand{\IElse}{\unskip\ \algorithmicelse\ }
\algnewcommand{\EndIIf}{\unskip\ \algorithmicend\ \algorithmicif}

\begin{algorithm}
\caption{Automatic Bayesian Cubature}\label{algorithm1}
  \begin{algorithmic}[1]
  	\Require a generator for the sequence
  	$\vx_1, \vx_2, \ldots$; 
  	a black-box function, $f$; 
  	an absolute error tolerance,
  	$\varepsilon>0$; the positive initial sample size, $n_0$;
  	the maximum sample size $n_{\textup{max}}$
  	
      \State $n \gets n_0, \; n' \gets 0, \; \err \gets \infty$
      
      \While{$\err > \varepsilon$ and $n \le n_{\textup{max}}$}
      
        \State\label{LoopStart}Generate $\{ \vx_i\}_{i=n' + 1}^{n}$ and sample $\{f(\vx_i)\}_{i=n'+1}^{n}$
        \State Compute $\vtheta$ by \eqref{eqn:thetaMLE} or \eqref{vthetaGCV}
        \State Compute $\err$  according to \eqref{eqn:err_MLEGCV}, \eqref{FJH:eq:errFull}, or \eqref{GCVerr}
        
       	\State	$n' \gets n, \; n \gets 2n'$
        
        \EndWhile
        
        \State Sample size to compute $\hmu$, $n \gets n'$
        \State Compute $\hmu$, the approximate integral,   according to \eqref{eqn:cubMLE} or \eqref{eqn:muCV}
      \State \Return $\hmu, \; n$  and $\err$
  \end{algorithmic}
\end{algorithm}

As described above, the computational cost of Algorithm~\ref{algorithm1} is the sum of the following:
\begin{itemize}
	\item $\Order\bigl(n\$(f) \bigr)$ for the integrand data, where $\$(f)$ is the computational cost of a single $f(\vx)$; $\$(f)$ may be large if it is the result of an expensive simulation; $\$(f)$ is typically proportional to $d$;
	
	\item $\Order\bigl(N_{\opt} n^2 \$(C_\vtheta) \bigr)$ for the evaluation of the Gram matrix $\mC_{\vtheta}$, $N_\opt$ is the number of optimization steps required, and  $\$(C_\vtheta)$ is the computational cost of a single $C_\vtheta(\vt,\vx)$; $\$(C_\vtheta)$ is typically proportional to $d$; and
	
	\item $\Order\bigl(N_{\opt} n^3 \bigr)$ for the matrix inversions and determinant calculations; this cost is independent of $d$.
	
\end{itemize}

As we see in the example in the next section, the cost increases quickly as the $n$ required to meet the error tolerance increases.  This motivates the fast Bayesian cubature algorithm presented in Chapter~\ref{sec:fast_BC}.

\Section{Example with the Mat\'ern Kernel} \label{MVN_example}

To demonstrate automatic Bayesian cubature consider a Mat\'ern covariance kernel:
\begin{align}
\label{eqn:matern_kernel}
C_{\theta}(\vx, \vt) = \prod_{\ell=1}^d \exp(-\theta|\vx_\ell-\vt_\ell|)(1+\theta |\vx_\ell-\vt_\ell|).
\end{align}
Also, consider the integration problem of evaluating  \emph{multivariate Gaussian probabilities}:
\begin{equation}
\label{eqn:GaussDef}
\mu = \int_{(\va,\vb)} \frac{\exp\bigl(- \frac 12 \vt^T \mSigma^{-1} \vt \bigr)}{\sqrt{(2 \pi)^d \det(\mSigma)}} \, \dvt,
\end{equation}
where $(\va,\vb)$ is a finite, semi-infinite or infinite box in $\reals^d$.  This integral does not have an analytic expression for general $\mSigma$, so cubatures are required.  

Genz \cite{Gen93} introduced a variable transformation to transform \eqref{eqn:GaussDef} into an integral on the unit cube.  Not only does this variable transformation accommodate domains that are (semi-)infinite, it also tends to smooth out the integrand better, which expedites the cubature.  Let $\mSigma= \mL \mL^T$ be the Cholesky decomposition where $\mL = (l_{jk})_{j,k=1}^d$ is a lower triangular matrix.  Iteratively define
\begin{align*}
\alpha_1 = \Phi(a_1), 
&\qquad
\beta_1 = \Phi(b_1),
\\
\alpha_\ell(x_1,...,x_{\ell-1}) &= 
\Phi
\left(
\frac{1}{l_{\ell \ell}} 
\left(
a_\ell - \sum_{k=1}^{\ell-1} l_{\ell k} \Phi^{-1}(\alpha_k + x_k(\beta_k-\alpha_k))
\right)
\right), \quad \ell=2,...,d,
\\
\beta_\ell(x_1,...,x_{\ell-1}) &= 
\Phi
\left(
\frac{1}{l_{\ell\ell}} 
\left(
b_\ell - \sum_{k=1}^{\ell-1} l_{\ell k} \Phi^{-1}(\alpha_k + x_k(\beta_k-\alpha_k))
\right)
\right), \quad \ell=2,...,d,
\end{align*}
\begin{align}
\label{eqn:fGenzdef}
f_{\text{Genz}}(\vx) = \prod_{\ell=1}^d [\beta_\ell(\vx) - \alpha_\ell(\vx)].
\end{align}
where $\Phi$ is the cumulative standard normal distribution function.
Then, $$\mu = \int_{[0,1]^{d-1}} f_{\text{Genz}}(\vx) \, \dvx.$$
This approach transforms a $d'$ dimensional integral into a $d=d'-1$ dimensional integral.

\begin{figure}
	% \captionsetup[subfigure]{labelformat=empty}
	%\begin{subfigure}[h]{0.48\linewidth}
	%	\includegraphics[width=1.1\linewidth]{}
	%\end{subfigure}
	\centering
	%\begin{subfigure}[h]{0.48\linewidth}
		\includegraphics[width=0.75\linewidth]{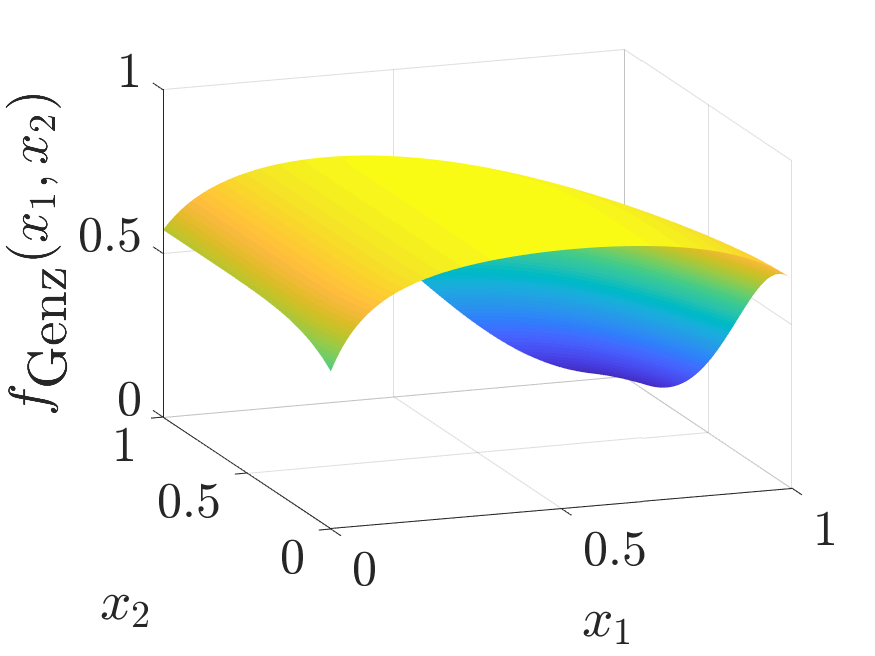}
	%\end{subfigure}
	\caption{The $d=3$ multivariate normal probability transformed to an integral of $f_{\text{Genz}}$ with  $d=2$. This plot can be reproduced using \code{IntegrandPlots.m} in GAIL.}
	\label{fig:MVN_Genz}
\end{figure}

We use the following parameter values in the simulation: 
\begin{equation*}
d = 3, \quad \va = \begin{pmatrix}[1]
-6 \\ -2 \\ -2
\end{pmatrix}, \quad 
\vb = \begin{pmatrix}[1]
5 \\ 2 \\ 1
\end{pmatrix} , \quad 
\mL = \begin{pmatrix}[1]
4 & 1 & 1 \\ 0 & 1 & 0.5 \\ 0 & 0 & 0.25
\end{pmatrix}.
\end{equation*}

\begin{figure}[ht]
	\centering
	\includegraphics[width=0.9\linewidth]{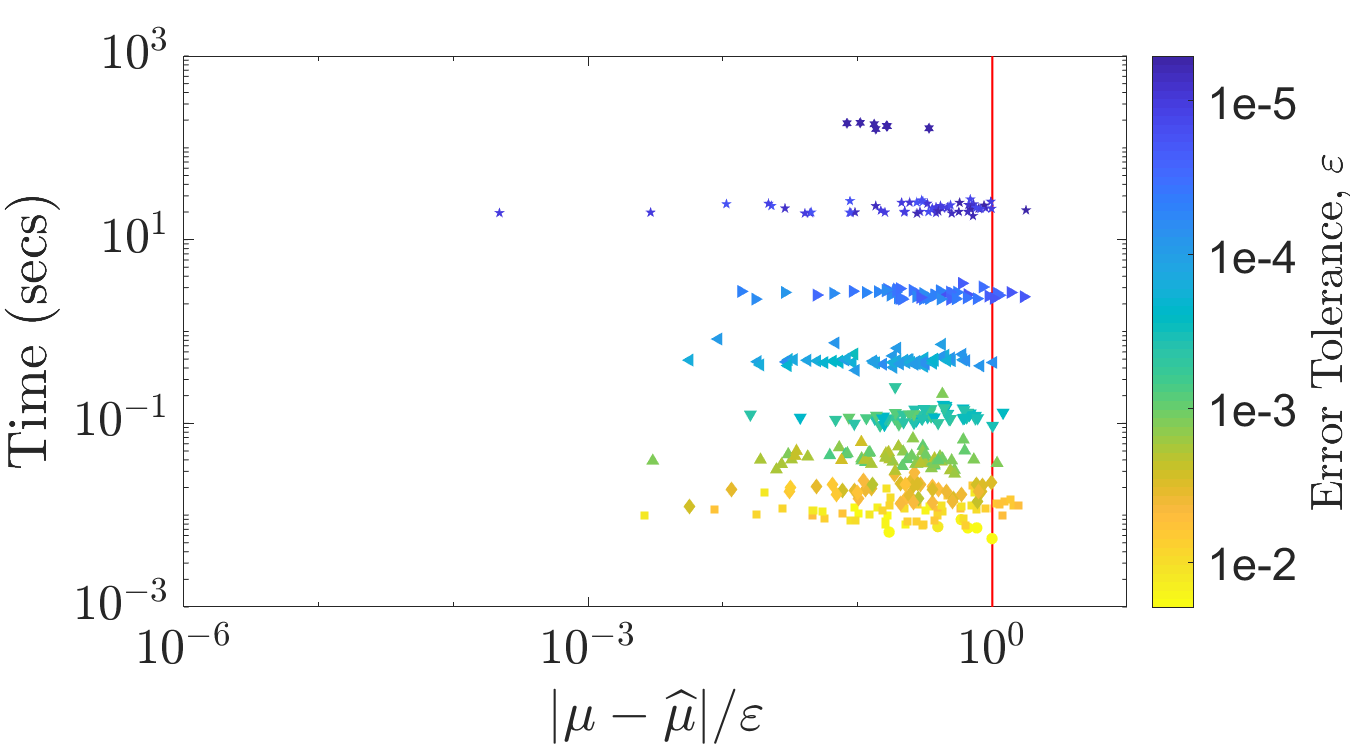}
	\caption{Multivariate Gaussian probability: Guaranteed integration using Mat\'ern kernel in $d=2$ using empirical Bayes stopping criterion within error tolerance $\varepsilon$. This figure can be conditionally reproduced using \code{matern\_guaranteed\_plots.m} in GAIL.}
	\label{fig:MVN_Metern_d2b2}
\end{figure}
\begin{figure}[ht]
	\centering
	\includegraphics[width=0.9\linewidth]{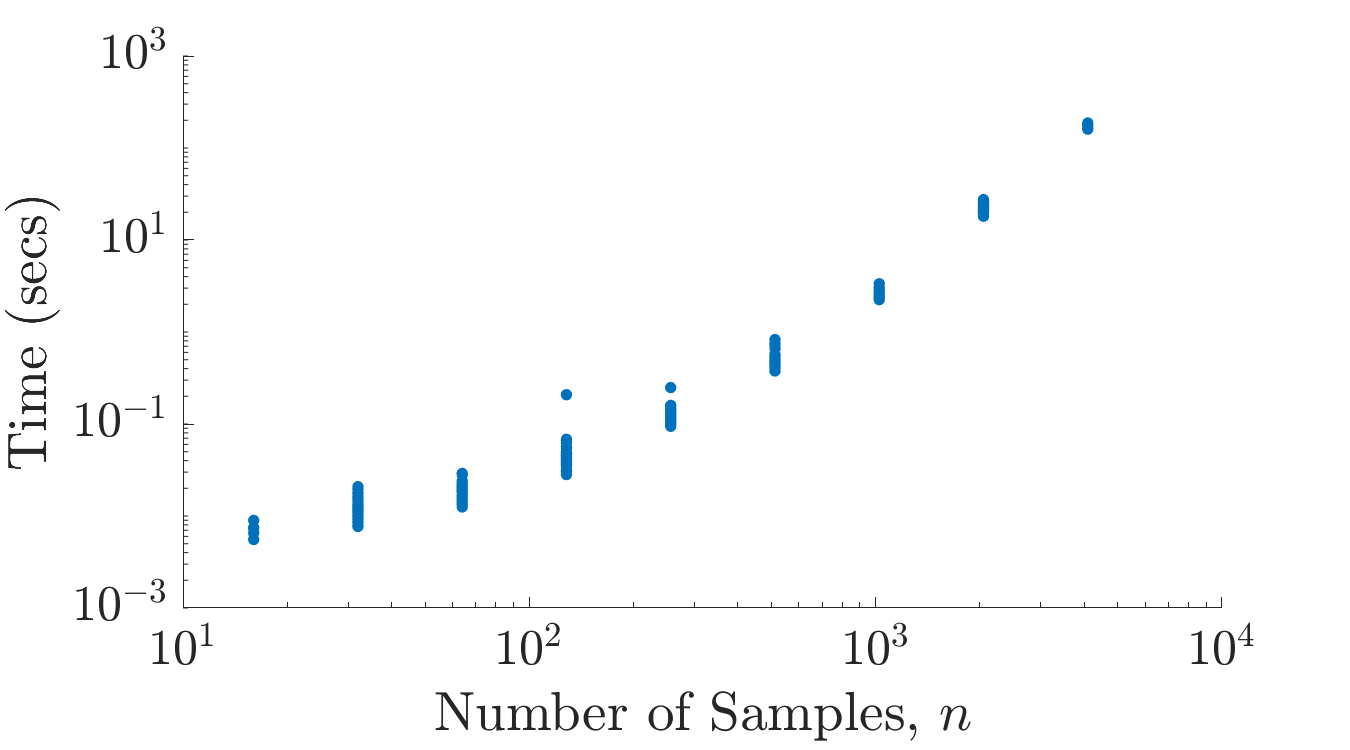}
	\caption{Multivariate Gaussian probability estimated using Mat\'ern kernel in $d=2$ using empirical Bayes stopping criterion. Computation time rapidly increases with increase of $n$. This figure can be conditionally reproduced using \code{matern\_guaranteed\_plots.m} in GAIL.}
	\label{fig:MVN_Metern_d2b2_time_growth}
\end{figure}
The node sets are randomly scrambled Sobol' points \cite{DicEtal14a,DicPil10a}.  The results are for 400 randomly chosen $\varepsilon$ in the interval $[10^{-5}, 10^{-2}]$ as shown in \figref{fig:MVN_Metern_d2b2}. In each run, the nodes are randomly scrambled.  We  observe the algorithm meets the error criterion 95\% of the time even though we used 99\% credible intervals.
One possible explanation is that the matrix inversions in the algorithm are ill-conditioned leading to numerical inaccuracies.  Another possible explanation is that this Mat\'ern covariance kernel is not a good match for the integrand.

%with Intel i7 3630QM and 16GB RAM memory
On our test computer, it took more than an hour to compute $\hmu_n$ with $n=2^{14}$. 
As shown in \figref{fig:MVN_Metern_d2b2_time_growth}, the computation time increases rapidly with $n$. 
The empirical Bayes estimation of $\vtheta$, which requires repeated evaluation of the objective function, is the most time consuming of all. This is due to fact that the objective function needs to be computed multiple times in every iteration to find its minimum. It takes tens of seconds to compute $\hmu_n$ with $\varepsilon = 10^{-5}$.   In contrast, this example in Chapter~\ref{sec:NumExp} take less than a hundredth of a second to compute $\hmu_n$ with the same $\varepsilon$ using our new algorithm. Not only is the Bayesian cubature with the Mat\'ern kernel slow, but also $\mC_\vtheta$ becomes highly ill-conditioned as $n$ increases.
So, Algorithm~\ref{algorithm1} in its current form is impractical when $n$ must be large.

%%%%%%%%%%%%%%%%%%%%%%%%%%%%%%%%%%%%%%%%%%%%%%%%%%%%%%%%%%%%%%%%%%%%%%%%%%%%%%%%%%%%%%%%%%%%%%%
%%%%%%%%%%%%%%%%%%%%%%%%%%%%%%%%%%%%%%%%%%%%%%%%%%%%%%%%%%%%%%%%%%%%%%%%%%%%%%%%%%%%%%%%%%%%%%%
\Chapter{Fast Automatic Bayesian Cubature}\label{sec:fast_BC}
%%%%%%%%%%%%%%%%%%%%%%%%%%%%%%%%%%%%%%%%%%%%%%%%%%%%%%%%%%%%%%%%%%%%%%%%%%%%%%%%%%%%%%%%%%%%%%%
%%%%%%%%%%%%%%%%%%%%%%%%%%%%%%%%%%%%%%%%%%%%%%%%%%%%%%%%%%%%%%%%%%%%%%%%%%%%%%%%%%%%%%%%%%%%%%%

%The generic automatic Bayesian cubature algorithm described in the previous section requires $\Order(n^3)$ operations to estimate $\vtheta$, compute the credible interval width, and compute the cubature.  This chapter explains how to speed up the calculations. A key is to choose kernels that match the design, $\{\vx_i\}_{i=1}^n$, so that the vector-matrix operations required by Bayesian cubature can be accomplished using fast Bayesian transforms at a cost of $\Order(n \log n)$.

The generic automatic Bayesian cubature algorithm described in the previous section requires $\Order\bigl(n \$(f) +  N_{\opt} [n^2 \$(C_\vtheta) + n^3]\bigr)$ operations to compute the cubature. Now we explain how to speed up the calculations. A key is to choose covariance kernels that match the nodes, $\{\vx_i\}_{i=1}^n$, so that the vector-matrix operations required by Bayesian cubature can be accomplished using \emph{fast Bayesian transforms} at a computational cost of $\Order\bigl(n \$(f) + N_{\opt} [n \$(C_\vtheta)  + n \log(n)] \bigr)$.
We develop the concept of fast Bayesian transform and show how matching kernels and nodes with three key assumptions are used. % First we establish the necessary foundations.

\Section{Fast Bayesian Transform Kernel}

We make some assumptions about the relationship between the covariance kernel and the nodes. 
In Chapter~\ref{sec:shift_invariant_kernel} these assumptions are shown to hold  for rank-1 lattices and shift-invariant kernels and again in Chapter~\ref{sec:sobol_walsh} to hold for Sobol' nodes and Walsh kernels.  Although the integrands and covariance kernels are real, it is convenient to allow related vectors and matrices to be complex.  A relevant example is the fast Fourier transform (FFT) of a real-valued vector, which is a complex-valued vector.

We introduce some further notation
\begin{align}
\nonumber
\mC = \mCtheta &= \Big(C_\vtheta(\vx_i,\vx_j)\Big)_{i,j=1}^n  = (\vC_1,...,\vC_n) 
\\
\label{eqn:ftk_factor}
&= \frac 1n \mV \mLambda \mV^H , 
\quad \quad \mV^H = n \mV^{-1}, \\
\nonumber
\mV &= (\vv_1,...,\vv_n)^T = (\vV_1,...,\vV_n) \\
\nonumber
\mC^p  &= \frac 1n \mV \mLambda^{p} \mV^H, \qquad \forall p \in \integers,
\end{align}
where $\mV^H$ is the Hermitian of $\mV$, $\vC_1,\cdots,\vC_n$ are columns of $\mC$,  $\vV_1,\cdots,\vV_n$ are columns of $\mV$, and $\vv_1,\cdots,\vv_n$ are rows of $\mV$. 
The columns of matrix $\mV$ are eigenvectors of $\mC$, and $\mLambda$ is a diagonal matrix of eigenvalues of $\mC$.
In this and later sections, we drop the $\vtheta$ dependence of various quantities for simplicity of notation.  The normalization of $\mV$ assumed in \eqref{eqn:ftk_factor} conveniently allows the first eigenvector, $\vV_1$, to be the vector of ones in \eqref{fastcompAssumpB} below.  
For any $n \times 1$ vector $\vb$, define the notation  $\widetilde{\vb} := \mV^H \vb$.

We make three assumptions that allow the fast computation:
\begin{subequations} \label{fastcompAssump}
	\begin{gather}
	\label{fastcompAssumpA}
	\mV \text{ may be identified analytically}, \\
	\label{fastcompAssumpB}
	\vv_1 = \vV_1 = \vone, \\
	\label{fastcompAssumpC}
	\text{ Computing $\mV^H \vb$ requires only $\Order(n \log n)$ operations } \forall \vb.
	\end{gather}
\end{subequations}
We call the transformation $\vb \mapsto \mV^H \vb$ a \emph{fast Bayesian transform} and $C_\vtheta$ a \emph{fast Bayesian transform kernel} for the matching nodes $\{\vx_i\}_{i=1}^\infty$.  

Under assumptions \eqref{fastcompAssump} the eigenvalues may be identified as the fast Bayesian transform of the first column of $\mC$:
\begin{align}
\nonumber
\vlambda 
& = \begin{pmatrix}[0.8]
\lambda_1 \\ \vdots \\ \lambda_n
\end{pmatrix} 
= \mLambda \vone = \mLambda \vv_1^* 
= \underbrace{\left( \frac 1n \mV^H  \mV \right) }_{\mathsf{I}} \mLambda \vv_1^* \\
&= \mV^H \left( \frac 1n \mV \mLambda \vv_1^* \right)
= \mV^H \vC_1 =  \widetilde{\vC}_1,
\label{eqn:fast_transform_to_eigvalues}
\end{align}
where $\mathsf{I}$ is the identity matrix and $\vv_1^*$ is the complex conjugate of the first row of $\mV$. 
Also note that the fast Bayesian transform of $\vone$ has a simple form
\begin{align*} 
\widetilde{\vone}
& = \mV^H \vone = \mV^H \vV_1 = \begin{pmatrix}[0.8] n \\ 0 \\ \vdots \\ 0 \end{pmatrix}.
\label{eqn:fast_transform_one}
\end{align*}

Many of the terms that arise in the calculations in  Algorithm~\ref{algorithm1} take the form $\va^T\mC^{p}\vb$ for real $\va$ and $\vb$ and integer $p$.  These can be calculated via the transforms $\widetilde{\va} = \mV^H \va$ and $\widetilde{\vb} = \mV^H \vb$ as 
\begin{equation*}
\va^T\mC^p\vb = \frac 1n \va^T \mV \mLambda^p \mV^H \vb
= \frac 1n \widetilde{\va}^H\mLambda^p \widetilde{\vb}
= \frac 1n \sum_{i=1}^n \lambda_i^p \widetilde{a}_i^* \widetilde{b}_i, 
\end{equation*}
Note that $\widetilde{\va}^*$ appears on the right side of this equation because $\va^T \mV = (\mV^H \va)^* = \widetilde{\va}^*$. In particular,
\begin{align*}
\vone^T\mC^{-p}\vone & = \frac{n}{\lambda_1^p},
&
\vone^T\mC^{-p}\vy &= \frac{\widetilde{y}_1}{\lambda_1^p},
\\
\vy^T\mC^{-p} \vy &= \frac 1n \sum_{i=1}^n \frac{\abs{\widetilde{y}_i}^2}{\lambda_i^p},
&
\vc^T\mCInv \vone &= \frac{\widetilde{c}_1}{\lambda_1},\\
\vc^T\mCInv \vy &= \frac 1n \sum_{i=1}^n \frac{\widetilde{c}_i^* \widetilde{y}_i}{\lambda_i}, & 
\vc^T\mCInv \vc &= \frac 1n \sum_{i=1}^n \frac{\abs{\widetilde{c}_i}^2}{\lambda_i},
\end{align*}
where $\widetilde{\vy} = \mV^H \vy$ and 
$\widetilde{\vc} = \mV^H \vc$.  For any real $\vb$, with $\widetilde{\vb} = \mV^H\vb$, it follows that $\widetilde{b}_1$ is real since the first row of $\mV^H$ is $\vone$.

The covariance kernel used in practice also may satisfy an additional assumption:
\begin{equation} \label{addAssump}
\int_{[0,1]^d} C(\vt,\vx) \, \D \vt = 1 \qquad \forall \vx \in [0,1]^d,
\end{equation}
which implies that $c_{0 \vtheta} = 1$ and $\vc_{\vtheta} = \vone$.  Under \eqref{addAssump}, the expressions above may be further simplified:
\begin{equation*}
\vc^T\mCInv \vone =
\vc^T\mCInv \vc = \frac{n}{\lambda_1}.
\end{equation*}
We use the fast Bayesian transform to speedup the computation of the hyperparameter $\vtheta$, the credible interval width $\err_{\CI}$, and the integral estimate $\hmu$ that we presented in Theorem~\ref{thm:param} as shown next.
\iftrue
The assumptions and results in this chapter lead to the following theorem.

\begin{theorem} \label{thm:fastparam}
	Under assumptions \eqref{fastcompAssump}, the parameters and credible interval half-widths in Theorem~\ref{thm:param} may be expressed in terms of the fast Bayesian transforms of the integrand data, the first column of the Gram matrix, $c_0$, and $\vc$ as follows:
	\begin{align}
	\nonumber
	m_\MLE &=  m_{\full} = m_{\GCV} =  \frac{\widetilde{y}_1}{n} = \frac 1n \sum_{i=1}^n y_i,
	\\
	\nonumber
	s^2_\MLE 
	& =
	\frac{1}{n^2} 
	\sum_{i=2}^n \frac{\abs{\widetilde{y}_i}^2}{\lambda_i}, \\
	\nonumber
	\widehat{\sigma}^2_{\textup{full}}
	& =
	\frac{1}{n(n-1)} \sum_{i=2}^n \frac{\abs{\widetilde{y}_i}^2}{\lambda_i}
	% \\
	% \nonumber
	% & \qquad \times
	\left[\frac{\lambda_1}{n}{\left(1 - \frac{\widetilde{c}_1}{\lambda_1}\right)^2} + \left(c_0  - \frac 1n \sum_{i=1}^n \frac{\abs{\widetilde{c}_i}^2}{\lambda_i}\right) \right], \\
	\nonumber 
	s^2_{\textup{GCV}} & =  \frac 1{n} \sum_{i=2}^n \frac{\abs{\widetilde{y}_i}^2}{\lambda_i^2}  \left [ \sum_{i=1}^n \frac{1}{\lambda_i} \right]^{-1},
	\end{align}
	\begin{subequations}
		\label{fastTheta}
		\begin{align}
		\label{eqn_MLE_loss_func_optimized_2}
		\vtheta_\MLE
		&= 
		\argmin_{\vtheta}
		\left[
		\log\left(
		\sum_{i=2}^n \frac{\abs{\widetilde{y}_i}^2}{\lambda_{i}}
		\right) 
		+ \frac{1}{n}\sum_{i=1}^n \log(\lambda_{i})
		\right],\\
		\label{thetaGCV} 
		\vtheta_{\GCV} 
		&= \argmin_\vtheta \left[ \log \left ( \sum_{i=2}^n \frac{\abs{\widetilde{y}_i}^2}{\lambda_{i}^2} 
		\right)  
		-2\log\left( \sum_{i=1}^n \frac{1}{\lambda_{i}} \right)
		\right], 
		\end{align}
	\end{subequations}
	\begin{align}
	\nonumber
	\hmu_\MLE  &= \hmu_{\full} = \hmu_{\GCV} =
	\frac{\widetilde{y}_1}{n} +
	\frac 1n \sum_{i=2}^n \frac{ \widetilde{c}_i^* \widetilde{y}_i}{\lambda_i}, \\
	\nonumber
	\err_\MLE  &
	=
	\frac{2.58}{n}\sqrt{
		\sum_{i=2}^{n} \frac{\abs{\widetilde{y}_i}^2}{\lambda_i}  
		\,
		\left( c_0 - \frac 1n \sum_{i=1}^n \frac{\abs{\widetilde{c}_i}^2}{\lambda_i} \right) 
	}, \\
	\nonumber
	\err_{\full} & = t_{n-1,0.995} \hsigma_{\textup{full}}, \\
	\nonumber
	\err_{\textup{GCV}} & =
	\frac{2.58}{n}\left\{\sum_{i=2}^n \frac{\abs{\widetilde{y}_i}^2}{\lambda_i^2}  \left [ \frac 1n \sum_{i=1}^n \frac{1}{\lambda_i} \right]^{-1} 
	\left( c_0 - \frac 1n \sum_{i=1}^n \frac{\abs{\widetilde{c}_i}^2}{\lambda_i} \right) 
	\right\}^{1/2}.
	\end{align}
	Under the further assumption \eqref{addAssump}, it follows that 
	\begin{equation}
	\label{muhatGCV-FB-MLE-Simple}
	\hmu_\MLE  = \hmu_{\full} = \hmu_{\GCV} =
	\frac{\widetilde{y}_1}{n} = \frac 1n \sum_{i=1}^n y_i,
	\end{equation}
	and so $\hmu$ is simply the sample mean.  Also, under assumption \eqref{addAssump}, the credible interval half-widths simplify to
	\begin{subequations}
		\label{errSimple}
		\begin{align}
		\label{eq:errMLEAllAsump}
		\err_\MLE  &
		=
		\frac{2.58}{n}\sqrt{
			\sum_{i=2}^{n} \frac{\abs{\widetilde{y}_i}^2}{\lambda_i}  
			\,
			\left( 1 -  \frac{n}{\lambda_1} \right) 
		}, \\
		\label{FJH:eq:errFullSimple}
		\err_{\textup{full}}
		&=
		t_{n-1,0.995}
		\sqrt{\frac{1}{n(n-1)} \sum_{i=2}^n \frac{\abs{\widetilde{y}_i}^2}{\lambda_i}  \left(\frac{\lambda_1}{n}  - 1  \right)}, \\
		\label{errGCVSimple}
		\err_{\textup{GCV}} & =
		\frac{2.58}{n}\left\{\sum_{i=2}^n \frac{\abs{\widetilde{y}_i}^2}{\lambda_i^2}  \left [ \frac 1n \sum_{i=1}^n \frac{1}{\lambda_i} \right]^{-1} 
		\left( 1 -  \frac{n}{\lambda_1} \right)  
		\right\}^{1/2}. 
		\end{align}
	\end{subequations}
	In the formulas for the credible interval half-widths and $\vlambda$ depends on $\vtheta$, and  $\vtheta$ is assumed to take on the values $\vtheta_{\MLE}$ or $\vtheta_{\GCV}$ as appropriate.
\end{theorem}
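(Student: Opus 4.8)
The plan is to read the theorem as a direct translation of the formulas of Theorem~\ref{thm:param} into the eigenbasis furnished by the factorization \eqref{eqn:ftk_factor}, using the transform identities collected just before the statement. First I would record three auxiliary facts that follow from \eqref{eqn:ftk_factor} together with $\mV^H\mV = n\mathsf{I}$: (i) $\det\mC = \prod_{i=1}^n \lambda_i$, since $\det\mC = n^{-n}\abs{\det\mV}^2\prod_i\lambda_i$ and $\abs{\det\mV}^2 = n^n$; (ii) $\trace(\mC^{-p}) = \sum_{i=1}^n \lambda_i^{-p}$ for every $p\in\integers$, because $\mC^{-p} = n^{-1}\mV\mLambda^{-p}\mV^H$ and $\trace(\mV\mLambda^{-p}\mV^H) = \trace(\mLambda^{-p}\mV^H\mV) = n\,\trace(\mLambda^{-p})$; and (iii) for real $\vb$ the entry $\widetilde b_1$ is real and equals $\vone^T\vb$, since the first row of $\mV^H$ is $\vv_1^* = \vone$. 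Together with the already-derived identities for $\va^T\mC^p\vb$, $\vone^T\mC^{-p}\vone = n/\lambda_1^p$, $\vone^T\mC^{-p}\vy = \widetilde y_1/\lambda_1^p$, $\vy^T\mC^{-p}\vy = \tfrac1n\sum_i \abs{\widetilde y_i}^2/\lambda_i^p$, $\vc^T\mCInv\vone = \widetilde c_1/\lambda_1$, $\vc^T\mCInv\vy = \tfrac1n\sum_i \widetilde c_i^*\widetilde y_i/\lambda_i$, and $\vc^T\mCInv\vc = \tfrac1n\sum_i\abs{\widetilde c_i}^2/\lambda_i$, these are all the ingredients needed.

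Next I would substitute term by term. For the means, $m_\MLE = (\widetilde y_1/\lambda_1)/(n/\lambda_1) = \widetilde y_1/n = \tfrac1n\sum_i y_i$, and the same cancellation with $\mC^{-2}$ in place of $\mCInv$ gives $m_{\GCV} = \widetilde y_1/n$ as well. The crucial observation, which I would state once and reuse, is that in every ``projected'' quadratic form $\vy^T[\mC^{-p} - \mC^{-p}\vone\vone^T\mC^{-p}/(\vone^T\mC^{-p}\vone)]\vy$ the subtracted term equals $\widetilde y_1^2/(n\lambda_1^p)$, which is exactly the $i=1$ summand of $\tfrac1n\sum_i\abs{\widetilde y_i}^2/\lambda_i^p$; hence the projected form equals $\tfrac1n\sum_{i\ge 2}\abs{\widetilde y_i}^2/\lambda_i^p$. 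Applying this with $p=1$ yields $s^2_\MLE$ and the first factor of $\hsigma^2_\full$, and with $p=2$ yields the numerator of $s^2_{\GCV}$; the trailing bracket of $\hsigma^2_\full$ comes from substituting $\vc^T\mCInv\vone = \widetilde c_1/\lambda_1$ and $\vone^T\mCInv\vone = n/\lambda_1$. For the $\vtheta$-estimates, the constant $1/n$ and the factor $n^{-n}$ appear additively inside the logarithms and drop out of the $\argmin$, while $\tfrac1n\log\det\mC = \tfrac1n\sum_i\log\lambda_i$ by (i), giving \eqref{fastTheta}. For the cubatures, expanding $\hmu_\MLE$ from \eqref{eqn:cubMLE} produces $\tfrac{\widetilde y_1}{n} - \tfrac{\widetilde y_1\widetilde c_1}{n\lambda_1}$ from the $\vone$-part and $\tfrac1n\widetilde c_1\widetilde y_1/\lambda_1 + \tfrac1n\sum_{i\ge2}\widetilde c_i^*\widetilde y_i/\lambda_i$ from $\vc^T\mCInv\vy$; the $i=1$ pieces cancel, leaving $\widetilde y_1/n + \tfrac1n\sum_{i\ge2}\widetilde c_i^*\widetilde y_i/\lambda_i$, and the identical algebra (now with $\vone^T\mC^{-2}\vone$ in the denominator, but using $(\mCInv\vone)^T\mCInv\vy = \vone^T\mC^{-2}\vy$) shows $\hmu_{\GCV}$ equals the same expression. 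The half-widths then follow by inserting $s_\MLE$, $\hsigma_\full$, $s_\GCV$ and $c_0 - \vc^T\mCInv\vc = c_0 - \tfrac1n\sum_i\abs{\widetilde c_i}^2/\lambda_i$ into \eqref{eqn:err_MLEGCV}--\eqref{FJH:eq:errFull}.

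Finally, under \eqref{addAssump} we have $c_0 = 1$ and $\vc = \vone$, hence $\widetilde\vc = \mV^H\vone = (n,0,\dots,0)^T$, i.e.\ $\widetilde c_1 = n$ and $\widetilde c_i = 0$ for $i\ge 2$; substituting this into the general formulas collapses $\hmu$ to the sample mean \eqref{muhatGCV-FB-MLE-Simple} and replaces $c_0 - \tfrac1n\sum_i\abs{\widetilde c_i}^2/\lambda_i$ by $1 - n/\lambda_1$ in the half-widths. The only nonimmediate simplification is for $\err_\full$: the bracket $\tfrac{\lambda_1}{n}(1-\tfrac{\widetilde c_1}{\lambda_1})^2 + (c_0-\vc^T\mCInv\vc)$ becomes $\tfrac{\lambda_1}{n}(1-\tfrac n{\lambda_1})^2 + (1-\tfrac n{\lambda_1})$, which factors as $(1-\tfrac n{\lambda_1})\bigl[\tfrac{\lambda_1}{n}(1-\tfrac n{\lambda_1}) + 1\bigr] = (1-\tfrac n{\lambda_1})\tfrac{\lambda_1}{n} = \tfrac{\lambda_1}{n} - 1$, giving \eqref{FJH:eq:errFullSimple}. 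The calculations are elementary; the two places demanding care—and hence the main obstacle—are keeping the complex conjugates straight (so that $\vc^T\mCInv\vy$ carries $\widetilde c_i^*$ while $\widetilde c_1$ and $\widetilde y_1$ are real) and recognizing the systematic cancellation of the $i=1$ summand in every projected quadratic form, without which the expressions would not reduce to sums starting at $i=2$.
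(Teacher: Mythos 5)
Your proposal is correct and follows essentially the same route as the paper: the paper proves the theorem in the sections that follow it by substituting the transform identities ($\vone^T\mC^{-p}\vone = n/\lambda_1^p$, $\vy^T\mC^{-p}\vy = \tfrac1n\sum_i\abs{\widetilde y_i}^2/\lambda_i^p$, $\vc^T\mCInv\vy = \tfrac1n\sum_i\widetilde c_i^*\widetilde y_i/\lambda_i$, etc.) into the formulas of Theorem~\ref{thm:param}, exactly as you do, and then specializes via $\widetilde\vc = (n,0,\dots,0)^T$ under \eqref{addAssump}. Your write-up in fact spells out some steps the paper leaves implicit (the determinant and trace identities, the cancellation of the $i=1$ summand in the projected quadratic forms, and the factorization yielding $\lambda_1/n - 1$ in $\err_{\textup{full}}$), all of which check out.
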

The remaining part of the chapter proves this theorem. We apply the fast Bayesian transform to speedup empirical Bayes, full Bayes and Generalized cross validation stopping criteria.

\fi

\Section{Empirical Bayes}

Under assumptions \eqref{fastcompAssump}, the empirical Bayes parameters in \eqref{eqn_m_MLE}, \eqref{eqn_s2_MLE}, \eqref{eqn:thetaMLE} \eqref{eqn:cubMLE}, and \eqref{eqn:err_MLEGCV} can be expressed in terms of the fast Bayesian transforms of the function data, the first column of the Gram matrix, and $\vc$ as follows:
\begin{align*}
\nonumber
m_\MLE &=  \frac{\widetilde{y}_1}{n} = \frac 1n \sum_{i=1}^n y_i,
\\
\nonumber
s^2_\MLE 
& =
\frac{1}{n^2} 
\sum_{i=2}^n \frac{\abs{\widetilde{y}_i}^2}{\lambda_i}, \\
% \label{eqn_MLE_loss_func_optimized_2}
\vtheta_\MLE
&= 
\argmin_{\vtheta}
\left[
\log\left(
\sum_{i=2}^n \frac{\abs{\widetilde{y}_i}^2}{\lambda_i}
\right)   + 
\frac{1}{n}\sum_{i=1}^n \log(\lambda_i)
\right],
\\
\nonumber
\hmu_\MLE  &= 
\frac{\widetilde{y}_1}{n} +
\frac 1n \sum_{i=2}^n \frac{ \widetilde{c}_i^* \widetilde{y}_i}{\lambda_i}, \\
\nonumber
\err_\MLE  &
=
\frac{2.58}{n}\sqrt{
	\sum_{i=2}^{n} \frac{\abs{\widetilde{y}_i}^2}{\lambda_i}  
	\,
	\left( c_0 - \frac 1n \sum_{i=1}^n \frac{\abs{\widetilde{c}_i}^2}{\lambda_i} \right) 
},
\end{align*}
The quantities on the right hand sides can be obtained in $\Order(n \log n)$ operations by fast Bayesian transforms. %, the left hand sides are all computable using the asymptotic computational cost.

Under the further assumption \eqref{addAssump} it follows that 
\begin{align*}
\nonumber
\hmu_\MLE  &= 
\frac{\widetilde{y}_1}{n} = \frac 1n \sum_{i=1}^n y_i,\\
%\label{eq:errMLEAllAsump}
\err_\MLE  &
=
\frac{2.58}{n}\sqrt{
	\sum_{i=2}^{n} \frac{\abs{\widetilde{y}_i}^2}{\lambda_i}  
	\,
	\left( 1 -  \frac{n}{\lambda_1} \right) 
}.
\end{align*}
Thus, in this  case $\hmu$ is simply the sample mean.

\Subsection{Gradient of the objective function using fast Bayesian transform} \label{deriv_obj_func_MLE} 
We refer back to Section~\ref{grad_descent_MLE}, where we discuss about using gradient descent for hyperparameter search but the computational cost is of $\Order(N_{\opt} n^3)$. Here we develop a techniques to speed up the computation.
If $\mV$ does not depend on $\vtheta$ then one can fast compute the derivative of Gram matrix $\mC$. Starting from the definition \eqref{eqn:ftk_factor} and taking derivative w.r.t. $\theta_\ell$, 
\begin{align}
\nonumber
\displaystyle \frac{\partial \mC}{\partial \theta_\ell} 
& = \frac 1n \mV \frac{\partial {\mLambda}}{\partial \theta_\ell} \mV^H
= \frac 1n \mV \bar{\mLambda}_{(\ell)} \mV^H,
\\
\nonumber
& \text{where} \quad \bar{\mLambda}_{(\ell)} = \diag(\bar{\vlambda}_{(\ell)}), \quad \text{and}
\\
\label{eqn:deriv_eigenval_gram_matrix}
&  \quad \bar{\vlambda}_{(\ell)} = \frac{\partial \vlambda}{\partial \theta_\ell} = \left( \frac{\partial \lambda_i}{\partial \theta_\ell} \right)_{i=1}^n 
= \left( \frac{\partial }{\partial \theta_\ell} \mV^H {\vC_1} \right)
= \mV^H \left( \frac{\partial }{\partial \theta_\ell} {C_\vtheta(\vx_1,\vx_i)} \right)_{i=1}^n,
\end{align}
where we used the fast Bayesian transform property \eqref{eqn:fast_transform_to_eigvalues}.
We use the notation $\bar{\vlambda}_{(\ell)} = \mV^H \bar{\vC_1}_{(\ell)}$ to denote the derivative of the eigenvalue ${\vlambda}_{(\ell)}$,  where $\bar{\vC}_{1(\ell)}$ denotes the first row of the gram matrix after taking the derivative in the $\ell$th variable, i.e.
\begin{align*}
\bar{\vC}_{1{(\ell)}} = \left(\frac{\partial }{\partial{\theta}_\ell} C_\vtheta(\vx_1,\vx_i) \right)_{i=1}^n.
\end{align*}
The goal is to compute the derivative of the objective function faster. First, let's rewrite the objective function from \eqref{eqn_MLE_loss_func_optimized_2} in two parts,
\begin{align*}
\mathcal{L}_{\MLE}(\vtheta | \vy) &= 
\underbrace{\frac{1}{n}  \log(\det\, \mC)}_{\mathcal{L}_{\abs{\mC}}} + \underbrace{\log\left((\vy-m_\MLE\vone)^T\mCInv(\vy-m_\MLE\vone) \right)}_{\mathcal{L}_{\vy}},
\\ &=: \mathcal{L}_{\abs{\mC}} + \mathcal{L}_{\vy}.
\end{align*}
Now, take the derivative:
\begin{align*}
\frac{\partial}{\partial \theta_\ell} \mathcal{L}_{\MLE}(\vtheta | \vy)
&=  \frac{\partial}{\partial \theta_\ell} \mathcal{L}_{\abs{\mC}} + \frac{\partial}{\partial \theta_\ell} \mathcal{L}_{\vy} \;.
\end{align*}
Now we tackle the individual terms,
\begin{align*}
\frac{\partial}{\partial \theta_\ell} \mathcal{L}_{\abs{\mC}} &= \frac{\partial}{\partial \theta_\ell}  \frac{1}{n} \log(\det\, \mC) 
\\ & = \frac 1n \trace{\left( \mCInv \frac{\partial \mC}{\partial \vtheta_\ell} \right)}
= \frac{1}{n}
\trace{\left( \mV {\mLambda}^{-1} \mV^H
	\frac 1n \mV \overline{\mLambda}_{(\ell)} \mV^H
	\right)}
\\
& = \frac{1}{n}
\trace{\left(
	\mV {\mLambda}^{-1}  \overline{\mLambda}_{(\ell)} \mV^H
	\right)}, \quad \text{where we used } \; \mV^H \mV = n,
\\
& =\frac{1}{n}
\trace{\left(
	\mV \;
	\diag\left( \frac{\overline{\lambda}_{i(\ell)}}{\lambda_i} \right)_{i=1}^n \mV^H
	\right)}
= \frac{1}{n} \sum_{i=1}^{n} \frac{\overline{\lambda}_{i(\ell)}}{\lambda_i},
\end{align*}
where we used the fact from \cite{Hig08},
\begin{align*}
\log(\det\, \mC)  = \trace{ (\log( \mC)) }.
\end{align*}
Part of the $\mathcal{L}_{\vy}$ was already simplified using the fast Bayesian transform,
\begin{align*}
{(\vy-m_\MLE\vone)^T\mCInv(\vy-m_\MLE\vone)} = \frac{1}{n} \sum_{i=2}^n \frac{\abs{\widetilde{y}_i}^2}{\lambda_i}.
\end{align*}
Using the above result,
\begin{align*}
\frac{\partial}{\partial \theta_\ell} \mathcal{L}_{\vy} 
&= \frac{\partial}{\partial \vtheta_\ell} \log\left(\frac{1}{n} \sum_{i=2}^n \frac{\abs{\widetilde{y}_i}^2}{\lambda_i} \right)
\\ 
&= \left(\frac{1}{n} \sum_{i=2}^n \frac{\abs{\widetilde{y}_i}^2}{\lambda_i}\right)^{-1}
\;
\frac{\partial}{\partial \vtheta_\ell} \left(\frac{1}{n} \sum_{i=2}^n \frac{\abs{\widetilde{y}_i}^2}{\lambda_i} \right)
\\ &= \left(
\frac{1}{n} \sum_{i=2}^n \frac{\abs{\widetilde{y}_i}^2}{\lambda_i} \right)^{-1} \frac{1}{n} \sum_{i=2}^n \frac{\abs{\widetilde{y}_i}^2}{\lambda_i^2}
\left( -\frac{\partial \lambda_i}{\partial \vtheta_\ell} \right)
\\ &= -\left(
\sum_{i=2}^n \frac{\abs{\widetilde{y}_i}^2}{\lambda_i} \right)^{-1} 
\left( \sum_{i=2}^n \abs{\widetilde{y}_i}^2 \frac{ \bar{ \lambda}_{i(\ell)} }{\lambda_i^2}
\right).
\end{align*}
Finally, using the above results,
\begin{align}
\label{eqn:deriv_obj_func_MLE}
\frac{\partial}{\partial \theta_\ell} \mathcal{L}_{\MLE}(\vtheta | \vy)
&=  \frac 1n \sum_{i=1}^{n} \frac{\bar{\lambda}_{i(\ell)}}{\lambda_i}
- \left({ \sum_{i=2}^n \frac{\abs{\tvy_i}^2 \bar{\lambda}_{i(\ell)}}{\lambda_i^2}}\right)
\left( {\sum_{i=2}^n \frac{\abs{\tvy_i}^2}{\lambda_\ell}} \right)^{-1},
\end{align}
where $\bar{\lambda}_{i(\ell)}$ is the derivative of the $i$th eigenvalue of $\mC$ in the $\ell$th variable. Please recollect the gradient descent proposed in \eqref{eqn:deep_descent} can be computed faster in $\Order(n \log n)$ using the result \eqref{eqn:deriv_obj_func_MLE}. A technique to compute this faster is discussed in Section \ref{sec:product_kernel}.

\iffalse
If $m=0$ assumption can be made,
\begin{align*}
\frac{\partial}{\partial \theta_\ell} \mathcal{L}_{\MLE}(\vtheta | \vy) 
&=  \frac 1n \sum_{i=1}^{n} \frac{\bar{\lambda}_{i(\ell)}}{\lambda_i}
- \left({ \sum_{i=1}^n \frac{\abs{\tvy_i}^2 \bar{\lambda}_{i(\ell)}}{\lambda_i^2}}\right)
\left( {\sum_{i=1}^n \frac{\abs{\tvy_i}^2}{\lambda_\ell}} \right)^{-1}.
\end{align*}
\fi

%%%%%%%%%%%%%%%%%%%%%%%%%%%%%%%%%%%%%%%%%%%%%%%%%%%%%%%%%%%%%%%%%%%%%%%%%%%
%%%%%%%%%%%%%%%%%%%%%%%%%%%%%%%%%%%%%%%%%%%%%%%%%%%%%%%%%%%%%%%%%%%%%%%%%%%

\Section{Full Bayes}

For the full Bayes approach the cubature is the same as for empirical Bayes.  We also defer to empirical Bayes to estimate the parameter $\vtheta$.  The width of the confidence interval is $\err_{\textup{full}} 
:= t_{n-1,0.995} \hsigma_{\textup{full}}$, where $\hsigma_{\textup{full}}^2$ can also be computed swiftly under assumptions \eqref{fastcompAssump}:
\begin{align*} %\label{FJH:eq:signmaFull}
\widehat{\sigma}^2_{\textup{full}} =
\frac{1}{n(n-1)} \sum_{i=2}^n \frac{\abs{\widetilde{y}_i}^2}{\lambda_i}
\left[\frac{\lambda_1}{n}{\left(1 - \frac{\widetilde{c}_1}{\lambda_1}\right)^2} + \left(c_0  - \frac 1n \sum_{i=1}^n \frac{\abs{\widetilde{c}_i}^2}{\lambda_i}\right) \right],
\end{align*}
Under assumption \eqref{addAssump} further simplification can be made:
\begin{equation*} %\label{FJH:eq:signmaFullSimple}
\widehat{\sigma}^2_{\textup{full}}
=
\frac{1}{n(n-1)} \sum_{i=2}^n \frac{\abs{\widetilde{y}_i}^2}{\lambda_i} \, \left(\frac{\lambda_1}{n}  - 1  \right),
\end{equation*}
It follows that
\begin{equation*} % \label{FJH:eq:errFullSimple}
\err_{\textup{full}}
=
t_{n-1,0.995}
\sqrt{\frac{1}{n(n-1)} \sum_{i=2}^n \frac{\abs{\widetilde{y}_i}^2}{\lambda_i} \, \left(\frac{\lambda_1}{n}  - 1  \right)}.
\end{equation*}

\Section{Generalized Cross-Validation}

GCV yields a different cubature, which nevertheless can also be computed quickly using the fast Bayesian transform.  Under assumptions \eqref{fastcompAssump}:
\begin{align}
\nonumber
m_{\textup{GCV}} & = m_\MLE =  \frac{\widetilde{y}_1}{n} = \frac 1n \sum_{i=1}^n y_i,\\
\nonumber 
s^2_{\textup{GCV}} & : =  \frac 1{n} \sum_{i=2}^n \frac{\abs{\widetilde{y}_i}^2}{\lambda_i^2}  \left [ \sum_{i=1}^n \frac{1}{\lambda_i} \right]^{-1}, \\
\label{eqn:thetaGCV_fast}
\vtheta_{\GCV} 
&= \argmin_\vtheta \left[ \log \left ( \sum_{i=2}^n \frac{\abs{\widetilde{y}_i}^2}{\lambda_i^2} 
\right) -2\log\left( \sum_{i=1}^n \frac{1}{\lambda_i} \right)
\right], \\
\nonumber
\hmu_{\GCV}
&= \hmu_\MLE  = \frac{\widetilde{y}_1}{n} +
\frac 1n \sum_{i=2}^n \frac{ \widetilde{c}_i^* \widetilde{y}_i}{\lambda_i}, \\
\nonumber
\err_{\textup{GCV}} & =
\frac{2.58}{n}\left\{\sum_{i=2}^n \frac{\abs{\widetilde{y}_i}^2}{\lambda_i^2}  \left [ \frac 1n \sum_{i=1}^n \frac{1}{\lambda_i} \right]^{-1}  \times
\left( c_0 - \frac 1n \sum_{i=1}^n \frac{\abs{\widetilde{c}_i}^2}{\lambda_i} \right) 
\right\}^{1/2}.
\end{align}
Moreover, under further assumption \eqref{addAssump} it follows that 
\begin{align*}
\nonumber %\label{muhatGCV-FB-MLE-Simple}
\hmu_{\textup{GCV}}
&= \hmu_\MLE = \hmu_{\textup{full}} =
\frac{\widetilde{y}_1}{n} = \frac 1n \sum_{i=1}^n y_i,\\
\err_{\textup{GCV}} & =
\frac{2.58}{n}\left\{\sum_{i=2}^n \frac{\abs{\widetilde{y}_i}^2}{\lambda_i^2}  \left [ \frac 1n \sum_{i=1}^n \frac{1}{\lambda_i} \right]^{-1}  
\left( 1 -  \frac{n}{\lambda_1} \right)  
\right\}^{1/2}.
\end{align*}
In this case too, $\hmu$ is simply the sample mean.

\Subsection{Gradient of the objective function}
Using the results obtained from the Section \ref{deriv_obj_func_MLE} with empirical Bayes, one can reduce the computational cost of the derivative of the objective function in \eqref{eqn:thetaGCV_fast},
\begin{align*}
\mathcal{L}_\GCV(\vtheta | \vy)
&= \log \left ( \sum_{i=2}^n \frac{\abs{\widetilde{y}_i}^2}{\lambda_i^2} 
\right) -2\log\left( \sum_{i=1}^n \frac{1}{\lambda_i} \right).
\end{align*}
Using the similar techniques from \secref{deriv_obj_func_MLE}, the derivative of the objective function w.r.t $\theta_\ell$:
\begin{align*}
& \frac{\partial}{\partial \theta_\ell}  \mathcal{L}_\GCV(\vtheta | \vy)
\\
&= \left ( \sum_{i=2}^n \frac{\abs{\widetilde{y}_i}^2}{\lambda_i^2} \right)^{-1}
\frac{\partial}{\partial \theta_\ell} \left ( \sum_{i=2}^n \frac{\abs{\widetilde{y}_i}^2}{\lambda_i^2} \right)
-2 \left( \sum_{i=1}^n \frac{1}{\lambda_i} \right)^{-1}
\frac{\partial}{\partial \theta_\ell} \left( \sum_{i=1}^n \frac{1}{\lambda_i} \right)
\\
&= \left ( \sum_{i=2}^n \frac{\abs{\widetilde{y}_i}^2}{\lambda_i^2} \right)^{-1}
\left ( \sum_{i=2}^n \frac{\abs{\widetilde{y}_i}^2}{\lambda_i^3} (-2) \frac{\partial\lambda_i}{\partial \theta_\ell}  \right)
\\ & \hspace{4cm} 
-2 \left( \sum_{i=1}^n \frac{1}{\lambda_i} \right)^{-1}
\left( \sum_{i=1}^n \frac{1}{\lambda_i^2} (-1)\frac{\partial \lambda_i}{\partial \theta_\ell}  \right)
\\
&= -2 \left ( \sum_{i=2}^n \frac{\abs{\widetilde{y}_i}^2}{\lambda_i^2} \right)^{-1}
\left ( \sum_{i=2}^n \frac{\abs{\widetilde{y}_i}^2 \bar{\lambda}_{i(\ell)} }{\lambda_i^3}    \right)
+ 2 \left( \sum_{i=1}^n \frac{1}{\lambda_i} \right)^{-1}
\left( \sum_{i=1}^n \frac{\bar{\lambda}_{i(\ell)} }{\lambda_i^2}  \right).
\end{align*}
Thus,
\begin{multline}
\frac{\partial}{\partial \theta_\ell}  \mathcal{L}_\GCV(\vtheta | \vy)
\label{eqn:deriv_obj_func_full}
= -2 \left ( \sum_{i=2}^n \frac{\abs{\widetilde{y}_i}^2}{\lambda_i^2} \right)^{-1}
\left ( \sum_{i=2}^n \frac{\abs{\widetilde{y}_i}^2 \bar{\lambda}_{i(\ell)} }{\lambda_i^3}    \right) \\
+ 2 \left( \sum_{i=1}^n \frac{1}{\lambda_i} \right)^{-1}
\left( \sum_{i=1}^n \frac{\bar{\lambda}_{i(\ell)} }{\lambda_i^2}  \right),
\end{multline}
where $\bar{\lambda}_{i(\ell)}$ is the derivative of the $i$th eigenvalue of the Gram matrix, $\mC$, in the $\ell$th variable. We discuss a technique to compute $\bar{\lambda}_{i(\ell)}$ in the next section below.

\Section{Product Kernels}
\label{sec:product_kernel}

In this research, we use product kernels in the demonstrations and numerical implementations. They got nice properties which are helpful to obtain analytical results easily. Product kernels in $d$ dimensions are of the form,
\begin{align}
\label{eqn:prod_kernel}
C_\vtheta(\vt, \vx) = 
\prod_{\ell=1}^d \biggl[ 1 - \eta_\ell \; \mathfrak{C}(x_\ell,t_\ell) \biggr]
\end{align}
% $ C_{\vtheta}(\vt, \vx) = \prod_{l=1}^d [ 1 - \eta_l \mathfrak{C}(x_l,t_l) ]$
where $\eta_\ell$ is called shape parameter in the $\ell$th variable for $\ell=1,\cdots,d$, and $\mathfrak{C}$ is chosen such that to ensure $C_{\vtheta}$ is symmetric and positive definite. Our goal is to compute $\bar{\lambda}_{i(\ell)}$ for which the kernel derivative is necessary. The derivative of the product kernels can be obtained easily. Please note that $\vtheta$ denotes all the hyper parameters of the kernel $C$ where $\eta$ is one of them and called the shape parameter.

\Subsection{Derivative of the product kernel when $\eta_1=\cdots=\eta_d=\eta$}
\label{sec:deriv_of_kernel}
It was suggested to use gradient descent to find optimal shape parameter in \secref{grad_descent_MLE}. In this section, we compute the gradient for product kernels. When the $\eta_1=\cdots=\eta_d=\eta$, the derivative of a product kernel  w.r.t. $\eta$ can be obtained as below,
\begin{align*}
\frac{\partial}{\partial \eta} C_\vtheta(\vt, \vx) 
& =
\frac{\partial}{\partial \eta} 
\prod_{j=1}^d \biggl[
1 - \eta \mathfrak{C}(x_j,t_j) \biggr] % \quad  \mathfrak{C}(x,t) := (-1)^{r} B_{2r}( |{x-t}| \bmod 1 ) 
\\
& = 
\sum_{\ell=1}^d  
\prod_{j=1, j \neq \ell}^d \biggl[
1 - \eta \mathfrak{C}(x_j,t_j) \biggr]
\biggl( - \mathfrak{C}(x_\ell,t_\ell) \biggr)
\\
& =
\prod_{j=1}^d \biggl[
1 - \eta \mathfrak{C}(x_j,t_j) \biggr]
\sum_{\ell=1}^d 
\frac{
	\biggl( - \mathfrak{C}(x_\ell,t_\ell) \biggr)
}{
	1 - \eta \mathfrak{C}(x_\ell,t_\ell) 
}
\\
& =
C_\vtheta(\vt, \vx) 
\frac{1}{\eta}
\sum_{\ell=1}^d 
\frac{
	\biggl(1 - \eta \mathfrak{C}({x_\ell,t_\ell})  - 1 \biggr)
}{
	1 - \eta \mathfrak{C}(x_\ell,t_\ell) 
}
\\
& =
C_\vtheta(\vt, \vx) 
\frac{1}{\eta}
\sum_{\ell=1}^d 
\biggl(
1 - 
\frac{1
}{
	1 - \eta \mathfrak{C}(x_\ell,t_\ell) 
}
\biggr)
\\
& =
({d}/{\eta} )
\underbrace{
	\left(
	\prod_{j=1}^d \biggl[
	1 - \eta \mathfrak{C}(x_j,t_j) \biggr]
	\right) }_
{ C_\vtheta(\vt, \vx) }
\biggl(
1 - 
\frac{1}{d} \sum_{\ell=1}^d
\frac{1}
{ 1 - \eta \mathfrak{C}(x_\ell,t_\ell) }
\biggr)
.
\end{align*}
Thus,
\begin{align*}
\frac{\partial}{\partial \eta} C_\vtheta(\vt, \vx) = ({d}/{\eta} ) C_\vtheta(\vt, \vx) 
\biggl(
1 - 
\frac{1}{d} \sum_{\ell=1}^d
\frac{1}
{ 1 - \eta \mathfrak{C}(x_\ell,t_\ell) }
\biggr).
\end{align*}

\Subsubsection{When $\eta_\ell$ is different for each $\ell = 1,\cdots,d$}
In this case, we will have a vector of length $d$ shape parameters. Derivative of the kernel, $C_\vtheta(\vt, \vx)$ \eqref{eqn:prod_kernel}, with respect to $\eta_\ell$ is,
\begin{align*}
\frac{\partial}{\partial \eta_\ell} C_\vtheta(\vt, \vx) 
& =
\frac{\partial}{\partial \eta_\ell} 
\prod_{j=1}^d \biggl[
1 - \eta_j \mathfrak{C}(x_j,t_j) \biggr], \quad \text{where} \quad \ell = 1, \cdots, d
\\
& = 
\prod_{j=1, j \neq \ell}^d \biggl[
1 - \eta_j \mathfrak{C}(x_j,t_j) \biggr]
\biggl( - \mathfrak{C}(x_\ell,t_\ell) \biggr)
\\
& =
\prod_{j=1}^d \biggl[
1 - \eta_j \mathfrak{C}(x_j,t_j) \biggr]
\frac{
	\biggl( - \mathfrak{C}(x_\ell,t_\ell) \biggr)
}{
	1 - \eta_\ell \mathfrak{C}(x_\ell,t_\ell) 
}
\\
& =
C_\vtheta(\vt, \vx) 
\frac{1}{\eta_\ell}
\frac{
	\biggl(1 - \eta_\ell \mathfrak{C}(x_\ell,t_\ell)  - 1 \biggr)
}{
	1 - \eta_\ell \mathfrak{C}(x_\ell,t_\ell) 
}
\\
& =
C_\vtheta(\vt, \vx) 
\frac{1}{\eta_\ell}
\biggl(
1 - 
\frac{1
}{
	1 - \eta_\ell \mathfrak{C}(x_\ell,t_\ell) 
}
\biggr)
\\
& =
\frac{1}{\eta_\ell} 
\underbrace{
	\left(
	\prod_{j=1}^d \biggl[
	1 - \eta \mathfrak{C}(x_j,t_j) \biggr]
	\right) }_
{ C_\vtheta(\vt, \vx) }
\biggl(
1 - 
\frac{1}
{ 1 - \eta_\ell \mathfrak{C}(x_\ell, t_\ell) }
\biggr) 
.
\end{align*}
Thus, 
\begin{align*}
\frac{\partial}{\partial \eta_\ell} C_\vtheta(\vt, \vx) = \frac{1}{\eta_\ell} 
{ C_\vtheta(\vt, \vx) }
\biggl(
1 - 
\frac{1}
{ 1 - \eta_\ell \mathfrak{C}(x_\ell,t_\ell) }
\biggr).
\end{align*}
Please note that the above derivatives do not depend on $\mathfrak{C}(x,t)$ and most importantly these computations are applicable to any product kernel of the form \eqref{eqn:prod_kernel}.
%$ C_\vtheta(\vt, \vx) = \prod_{j=1}^d [ 1 - \eta_j \mathfrak{C}(x_j,t_j) ]$.
The $\bar{\lambda}_{i(\ell)}$ can be computed now using \eqref{eqn:deriv_eigenval_gram_matrix} with the computed kernel derivative, $\frac{\partial}{\partial \eta_\ell} C_\vtheta$.

\Subsection{Shape parameter search using steepest descent}
Using the obtained derivative of the eigenvalues, $\bar{\lambda}_{i(\ell)}$, one can easily compute the gradient of the objective function \eqref{eqn:deriv_obj_func_MLE} or \eqref{eqn:deriv_obj_func_full}. This can be further used to implement the steepest descent search as introduced in \secref{grad_descent_MLE} 
%\eqref{Dong2017a}
\begin{align*}
\eta^{(j+1)}_\ell = \eta^{(j)}_\ell - \nu \frac{\partial}{\partial \eta_\ell} \mathcal{L}(\vtheta | \vy), \quad j=0,1,\cdots,  \quad \ell = 1, \cdots, d
\end{align*}
where $\nu$ is the step size for the gradient descent, $j$ is the iteration index, and $\frac{\partial}{\partial \eta_\ell} \mathcal{L}(\vtheta | \vy)$ is either \eqref{eqn:deriv_obj_func_MLE} or \eqref{eqn:deriv_obj_func_full} depending on the choice of the hyperparameter search method. The parameter $\eta_\ell$ is usually searched in the whole $\reals$ by using the simple domain transformation as explained in Section~\ref{sec:kernel_param_search}.

% \Section{Summary}

%%%%%%%%%%%%%%%%%%%%%%%%%%%%%%%%%%%%%%%%%%%%%%%%%%%%%%%%%%%%%%%%%%%%%%%%%%%%%%%%%%%%%%%%%%%%%%%
%%%%%%%%%%%%%%%%%%%%%%%%%%%%%%%%%%%%%%%%%%%%%%%%%%%%%%%%%%%%%%%%%%%%%%%%%%%%%%%%%%%%%%%%%%%%%%%
\Chapter{Integration lattices and \\ Shift Invariant Kernels}
\label{sec:shift_invariant_kernel}
%%%%%%%%%%%%%%%%%%%%%%%%%%%%%%%%%%%%%%%%%%%%%%%%%%%%%%%%%%%%%%%%%%%%%%%%%%%%%%%%%%%%%%%%%%%%%%%
%%%%%%%%%%%%%%%%%%%%%%%%%%%%%%%%%%%%%%%%%%%%%%%%%%%%%%%%%%%%%%%%%%%%%%%%%%%%%%%%%%%%%%%%%%%%%%%

The preceding sections lay out an automatic Bayesian cubature algorithm whose computational cost is drastically reduced.  However, this algorithm relies on covariance kernel functions, $C_{\vtheta}$ and node sets, $\{\vx_i\}_{i=1}^n$ that satisfy assumptions \eqref{fastcompAssump}.  
In this chapter, we demonstrate such a covariance kernel and matching design.
When periodic shift-invariant kernels are combined with rank-1 lattice nodes, the resulting Gram matrix is symmetric and circulant. 
This combination also satisfies assumption \eqref{addAssump}.  To conveniently facilitate the fast Bayesian transform, it is assumed in this section and the next that $n$ is power of $2$.

\Section{Extensible Integration Lattice Node Sets}

We choose set of nodes defined by a shifted extensible integration lattice node sequence, which takes the form
\begin{equation}
\label{eqn:lattice_def}
\vx_{i} = \vh \phi(i-1) + \vDelta \mod \vone, \qquad i \in \naturals.
\end{equation} 
Here, $\vh$ is a $d$-dimensional generating vector of positive integers, $\vDelta$ is some point in $[0,1)^d$, often chosen at random, and $\{\phi(i)\}_{i=0}^n$ is the van der Corput sequence, defined by reflecting the binary digits of the integer about the decimal point, i.e., 
\begin{equation} \label{vdCDef}
\begin{array}{r|ccccccccccccc}
i & 0 & 1 & 2 & 3 & 4 &  5 & 6 & 7 & \cdots \\
i & 0_2 & 1_2 & 10_2 & 11_2 & 100_2 & 101_2 & 110_2 & 111_2  & \cdots\\
\toprule
\phi(i) & {}_2.0 &  {}_2.1 & {}_2.01 &  {}_2.11  & {}_2.001 &  {}_2.101 & {}_2.011 &  {}_2.111 & \cdots\\
\phi(i) & 0 &  0.5 &  0.25 & 0.75 &  0.125 & 0.625  &  0.375 & 0.875 & \cdots
\end{array}
\end{equation}
Note that 
\begin{align} \label{phiprop}
n\phi:\{0, \ldots, n-1 \} \to \{0, \ldots, n-1\} \quad
\text{is one-to-one},
\end{align}
assuming $n$ is a power of $2$.

These node sets are called shifted rank-1 lattice node sets.
A random shift $\vDelta$ is added to $\vh \phi(i-1)$ to get $\{\vx_{i}\}_{i=1}^n$ which is to avoid zero at the origin in the node sets. However, this shift does not disturb the discrepancy properties of $\{\vx_{i}\}_{i=1}^n$. 
The rank-1 lattices with the modulo one addition have a very desirable group structure that helps to satisfy fast Bayesian transform kernel assumptions.

An example of $64$ nodes is given in Figure~\ref{latticefig}.  The even coverage of the unit cube is ensured by a well chosen generating vector $\vh$.  The choice of generating vector is typically done offline by computer search.  Please refer to \cite{DicEtal14a, HicNie03a} for more on extensible integration lattices. Lattice rules are designed to integrate the class of certain sinusoidal functions without error.
\begin{figure}[htp]
	\centering
	\includegraphics[width=0.8\linewidth]{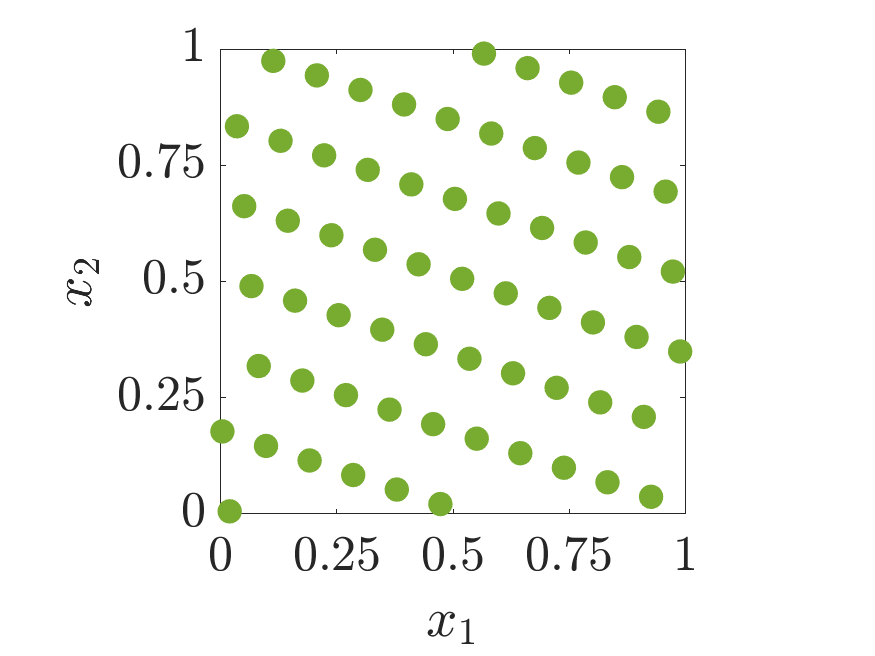}
	\caption{Example of a shifted integration lattice node set  in $d=2$. This plot can be reproduced using \code{PlotPoints.m}. \label{latticefig} }
\end{figure}

\Section{Shift Invariant Kernels}\label{sec:shift_invar_kern}

The covariance functions $C_{\vtheta}$ that match integration lattice node sets have the form
\begin{equation} \label{eq:shInv}
C_{\vtheta}(\vt,\vx) = K_{\vtheta}(\vt - \vx \bmod \vone).
\end{equation}
This is called a \emph{shift invariant kernel} because shifting both arguments of the covariance function by the same amount leaves the value unchanged.   By a proper scaling of the function $K_{\vtheta}$, the kernel satisfies the assumption \eqref{addAssump}. 
% Of course, $K_{\vtheta}$ is periodic and must be of the form that ensures 
Here, $K_{\vtheta}$ is chosen such that to ensure
$C_{\vtheta}$ is symmetric and positive definite, as assumed in \eqref{FJH:eq:CondPosDef}. 

A family of shift invariant kernels is constructed via even degree Bernoulli polynomials. Symmetric, periodic, positive definite kernels of this form appear in  \cite{DicEtal14a} and \cite{Hic96a}:
\begin{align*}
C_\vtheta(\vx, \vt) := &  \sum_{\vk \in \mathbb{Z}^d} \alpha_{\vk,\vtheta}  e^{2 \pi\sqrt{-1} \vk^T\vx}
e^{-2 \pi\sqrt{-1} \vk^T\vt}, \quad \alpha_{-\vk,\vtheta} = \alpha_{\vk,\vtheta}
\end{align*}
where $d$ is the number of dimensions and $\alpha_{\vk}$ is a positive scalar. The Gram matrix formed by this kernel is symmetric and positive definite. 
The \textit{shape parameter} $\eta_\ell$ changes the kernel's shape, so that the integrand is in the middle of the function space spanned by the kernel. 
%This form of the kernel is very convenient to use in any analytical derivations, but not suitable for use with finite precision computers as this involves infinite sum. 
If the coefficients are chosen as 
%\begin{align*}
%\alpha_{\vk,\vtheta} := & \prod_{\ell=1, k_\ell \ne 0}^d %\frac{1}{\max(\frac{|k_\ell|^r}{\eta_\ell},1)}  \;,  \quad \text{with} \; %{\alpha}_{\bm{0},\vtheta} = 1, r \in \naturals, % {\eta_\ell \leq 1},
%\end{align*}
\begin{align*}
\alpha_{\vk,\vtheta} := & \prod_{\ell=1, k_\ell \ne 0}^d 
\frac{{\eta_\ell}}{{|k_\ell|^r}}  \;,  \quad \text{with} \; {\alpha}_{\bm{0},\vtheta} = 1, \; r \in \naturals, % {\eta_\ell \leq 1},
\end{align*}
then there exists a simpler closed form expression.
\begin{multline}
\label{the_kernel_eqn_bernoulli}
K_\vtheta(\vx) =
\prod_{\ell=1}^d \biggl[
1 - (-1)^{r} \eta_\ell B_{2r}( {x_\ell} ) \biggr],  \\
\forall \vx \in [0,1]^d,   \vtheta := (r,\veta), \ r \in \naturals, \ \eta_\ell > 0. \qquad \qquad
\end{multline}
Larger $r$ implies a greater degree of smoothness of the kernel.  Larger $\eta_\ell$ implies greater fluctuations of the output with respect to the input $x_\ell$.  
The Bernoulli polynomials $B_{r}(x)$ are described in \cite[Chapter 24]{OlvEtal10a}
\begin{align*}
%\label{dlmf_bernoulli_24_8_3}
B_{r}(x) = \frac{-r!}{(2 \pi \sqrt{-1})^{r}} 
\sum_{\substack{k \neq 0,\\ k=-\infty}}^\infty 
\frac{e^{2\pi\sqrt{-1} k x}}{k^{r}}
\;\;
\begin{cases}
\text{for} \;\; r=1, \;\; 0 < x < 1 \\
\text{for} \;\; r=2,3,\hdots \;\; 0 \leq x \leq 1
\end{cases}
\end{align*}
Plots of $C(\cdot, 0.3)$ are given in \figref{fig:fourierkernel-dim1} for $d=1$ and for various $r$ and $\eta_1$ values.

\begin{figure}
	\centering  % , decodearray={0.2989, 0.5870,  0.1140}
	\includegraphics[width=0.9\linewidth]{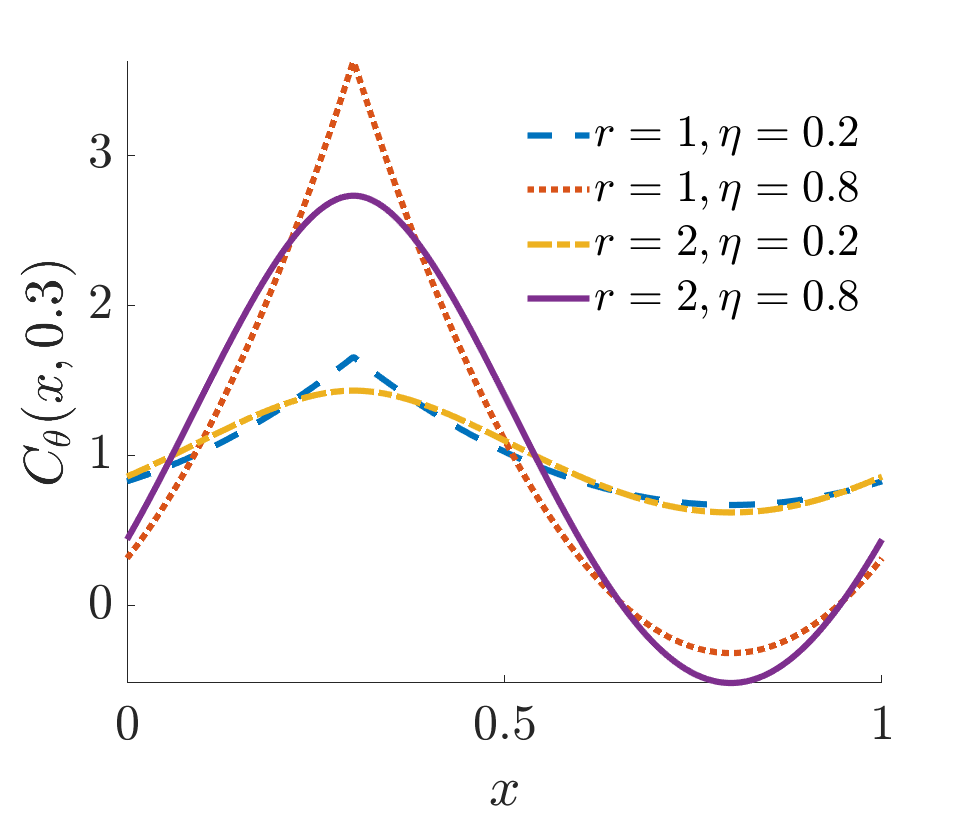}
	\caption[Fourier kernel]{Shift invariant kernel in $d=1$ shifted by 0.3 to show the discontinuity. This plot can be reproduced using \code{plot\_fourier\_kernel.m}}
	\label{fig:fourierkernel-dim1}
\end{figure}
Lattice cubature rules are known to have convergence rates that depend on the smoothness of the integrands, but that are rather independent of the choice of the integration lattice \cite{DicEtal14a}.  Thus, we expect integration lattice node sets to perform well regardless of the smoothness of the covariance kernel.  The bigger concern is whether the derivatives of the integrand are as smooth as the covariance kernel implies.  This topic is touched upon again in Section~\ref{sec:period_var_tx}.

\Subsection{Eigenvectors}
For general shift-invariance covariance functions the Gram matrix %takes the form
\begin{align}
\label{shInvKernGramMatrix}
\mC_\vtheta &= \bigl ( C_\vtheta(\vx_i, \vx_j) \bigr)_{i, j = 1}^n 
% = \Bigl ( K_\vtheta(\vh(\phi(i-1) - \phi(j-1)) \bmod \vone ) \Bigr)_{i, j = 1}^n.
\end{align}
% We now demonstrate the
can be shown that to have the eigenvector matrix %for $\mC$ is 
\begin{equation} \label{latticeVdef}
\mV = \Bigl ( \me^{2 \pi n \sqrt{-1} \phi(i-1)\phi(j-1)} \Bigr)_{i = 1}^n.
\end{equation}

One can interpret the sequence reordering from $\{\phi(i-1)\}_{i=1}^n$ to $(0, \ldots, 1-1/n)$, for $n$ a power of $2$, as a permutation. %, consequently as a permutation matrix acting upon $\mC$ in \eqref{shInvKernGramMatrix}. 
%With out the sequence reordering, the matrix $\Bigl ( K(\vh((i-1) - (j-1)) \bmod \vone ) \Bigr)_{i, j = 1}^n$ is nothing but a circulant matrix.
% One can see the sequence $0, \ldots, 1-1/n$ in combination with modulo 1 in \eqref{eq:shInv}.
% Thus it can be shown that the Gram matrix $\mC$ is the permutation of a circulant matrix.
% By the properties of $\phi$ in \eqref{phiprop}, it follows that 
Let
\begin{equation} \label{PermMat}
\mP = \bigl( \delta_{n\phi(i-1), j-1}  \bigr)_{i,j=1}^n
\end{equation}
be a permutation matrix, where $\delta_{\cdot,\cdot}$ is the Kronecker delta function.  Then,
\begin{align}
\nonumber
\mC_\vtheta &= \bigl ( C_\vtheta(\vx_i, \vx_j) \bigr)_{i, j = 1}^n \\
\nonumber
& = \Bigl ( K_\vtheta \bigl(\vh(\phi(i-1) - \phi(j-1) \bigr) \bmod \vone ) \Bigr)_{i, j = 1}^n 
\qquad  \text{by \eqref{eqn:lattice_def} and \eqref{eq:shInv}}  \\
\nonumber
& = \biggl( 
\sum_{i',j'=1}^n \delta_{n\phi(i-1), i'-1}  \,
K_\vtheta \bigl (\vh (i'-j')/n \bmod \vone \bigr )
\delta_{j'-1,n\phi(j-1)} 
\biggr)_{i,j=1}^n \\
& = \mP \mK_{\vtheta}  \mP^T, \quad \text{by \eqref{PermMat}} \label{shInvKernGramMatrixPermut} 
\intertext{where } 
\mK_{\vtheta} &= \bigl ( K_\vtheta \bigl (\vh (i-j)/n \bmod \vone \bigr ) \bigr)_{i, j = 1}^n.
\end{align}
Because $\mK_\vtheta$ is circulant, we know the form of it's eigenvector-eigenvalue decomposition:
\begin{equation} \label{Keig}
\mK_{\vtheta} = \frac 1n \mW \mLambda_\vtheta \mW^H, \quad \text{where} \quad \mW =  \Bigl ( \me^{2 \pi \sqrt{-1} (i-1)(j-1)/n} \Bigr)_{i,j = 1}^n
\end{equation}
where $\mLambda_\vtheta$ is a diagonal matrix.
By \eqref{shInvKernGramMatrixPermut} we then have the eigenvector-eigenvalue decomposition for $\mC_{\vtheta}$ assumed in \eqref{eqn:ftk_factor}, namely
\begin{align*} 
\mC_{\vtheta} &= \mP \mK_\vtheta \mP^T \\
&= \frac 1n  \mP \mW \mLambda_\vtheta \mW^H \mP^T 
= \frac 1n  \underbrace{\mP \mW \mP^T} \mLambda_\vtheta \underbrace{\mP \mW^H \mP^T} 
\\
&= \frac 1n \mV \mLambda_\vtheta \mV^H .
\end{align*}
Thus
\begin{equation}
\label{Clateig}
\mC_{\vtheta} = \frac 1n \mV \mLambda_\vtheta \mV^H , \qquad \mV = \mP \mW \mP^T,
\end{equation}
where the eigenvalues of  $\mC_{\vtheta}$ and $\mK_{\vtheta}$ are identical.
Note that the matrix multiplication by $\mV$ can be performed in $\Order(n \log n)$ operations using the FFT.

\Section{Continuous Valued Kernel Order}
\label{sec:non_integer_kernel_order}

\JRNote{Need better and more convincing motivation}

In the previous sections, we assumed that the shift-invariant kernel's order is an even valued integer and also fixed. It requires the practitioner to be aware of the integrand's smoothness to precisely handpick the kernel order to match the integrand's smoothness. However, it is not possible to know the integrand's smoothness in most of the practical applications. The constraint to have an integer-valued kernel order also limits the ability to continuously vary the kernel's smoothness to match the integrand like the shape parameter is varied to match. 

The integer kernel order is not suitable to optimally search by standard optimization algorithm.
As a consequence, one usually ends up choosing a higher kernel order when the integrand is not  smooth or lower kernel order when the integrand is very smooth.
Often it leads to longer computation time or poor accuracy in the numerical integration.
Here we explore two alternative forms of the kernel which allow the kernel order to be positive continuous value greater than one or a continuous value in the range $(0,1)$. Let us recall the infinite series expression that was used to construct the kernel \eqref{the_kernel_eqn_bernoulli}:
\begin{align*}
C_\vtheta(\vx, \vt) := &  \sum_{\vk \in \mathbb{Z}^d} \alpha_{\vk,\vtheta}  e^{2 \pi\sqrt{-1} \vk^T\vx}
e^{-2 \pi\sqrt{-1} \vk^T\vt}, \quad \text{where} \; 
\alpha_{\vk,\vtheta} = \prod_{\ell=1}^d \frac{\eta_\ell}{{|k_\ell|}^r} 
\end{align*}
and $\vtheta = (r, \veta)$.  This form is convenient for analytical derivations.
To make the derivations easier to follow, we fix the dimension $d=1$,
\begin{align*}
C_\vtheta(x, t) = & 1 + \eta \sum_{k \in \mathbb{Z}, k \neq 0 } \frac{1}{\abs{k}^r} 
e^{ 2 \pi\sqrt{-1} k x}
e^{-2 \pi\sqrt{-1} k t}.
\end{align*}

\Subsection{Truncated series kernel}
\label{sec:trunc_series_kernel}
The following variation to the infinite series kernel \eqref{the_kernel_eqn_bernoulli} has the kernel order in the interval $(1, \infty)$. This kernel provides algebraic decay but it is more robust in the hyperparameter search.
We reuse the original definition of the infinite kernel \eqref{the_kernel_eqn_bernoulli} but truncate to a finite length. This allows the kernel order $r$ continuous valued so that it does not have to be an even integer,  which was a constraint previously. 
For $d=1$,
\begin{align*}
C_\vtheta(x, t) = & 1 + \eta \sum_{k \in \mathbb{Z}, k \neq 0 } \frac{1}{\abs{k}^r} 
e^{ 2 \pi\sqrt{-1} k (x-t)},
\end{align*}
where $\theta = (r, \eta)$. 
Since the infinite sum cannot be used directly, we truncate to length $n$,
\begin{align*}
C_{\vtheta, n}(x, t) = & 1 + \eta \sum_{k = - n/2 }^{n/2 - 1} \frac{1}{\abs{k}^r} 
e^{ 2 \pi\sqrt{-1} k (x-t)}.
\end{align*}
The Gram matrix is written as 
\begin{align*}
\mC_{\vtheta, n} = & \biggl( C_{\vtheta, n}(\vx_i, \vx_j) \biggr)_{i,j=1}^n,
\end{align*}
where $n$ is the number of samples. 
The reason for having the truncation length and the number of samples equal will be obvious as we proceed further.
The first column of the Gram matrix is
\begin{align*}
\vC_{\vtheta, n} &= \biggl( C_{\vtheta, n}(\vx_i, \vx_1) \biggr)_{i=1}^n
\\
&= \left( \prod_{\ell=1}^d \left[ 1 + \eta_\ell \sum_{k = - n/2, k \neq 0 }^{n/2 - 1} \frac{1}{\abs{k_l}^r} 
e^{ 2 \pi\sqrt{-1} k_l (x_{i\ell}-x_{1\ell})}\right] \right)_{i=1}^n, 
\end{align*}
where $d$ is the number of dimensions. 
However the direct computation involves $n^2$ computations since we have chosen the truncation length to $n$. We can reduce the computations to $\Order(n\log n)$ using the FFT.
Define
\begin{align*}
\mathfrak{C}_r(t) &:= \sum_{k = - n/2, k \neq 0 }^{n/2 -1} \frac{1}{\abs{k}^r} 
e^{ 2 \pi\sqrt{-1} k\, t}.
\end{align*}
Using the $\mathfrak{C}_r$, rewrite
\begin{align}
\label{eqn:trunc_series_kernel}
\vC_{\vtheta, n}
&= \left( \prod_{l=1}^d \left[ 1 + \eta \mathfrak{C}_r( \abs{x_{il} - x_{1l}})\right] \right)_{i=1}^n.
\end{align}
One can observe
$\abs{x_{i\ell}-x_{1\ell}} \in \lbrace 0, \frac 1n, \frac 2n, \dots \frac{n-1}{n}  \rbrace$ by using the definition of lattice points from \eqref{eqn:lattice_def}. This can be used to rewrite $\mathfrak{C}_r$ in a much simpler form,
\begin{align*}
\mathfrak{C}_r \left(\frac jn \right) &= \sum_{k = - n/2, k \neq 0 }^{n/2 - 1} \frac{1}{\abs{k}^r} 
e^{ 2 \pi\sqrt{-1} k (\frac jn)}, \quad \text{where} \;  j=0,1,\dots n-1.
\end{align*}
This notation is very convenient to show that $\widetilde{\mathfrak{C}}_r$,  the discrete Fourier transform of $\mathfrak{C}_r$, can be computed analytically
\begin{align*}
\widetilde{\mathfrak{C}}_r(m) &= \sum_{j=0}^{n-1} \mathfrak{C}_r (j/n) e^{- 2 \pi\sqrt{-1} jm/n} 
\\
% &= \sum_{k = - n/2, k \neq 0 }^{n/2-1} 
% \sum_{j=0}^{n-1} \frac{1}{\abs{k}^r} 
% e^{ 2 \pi\sqrt{-1} k  j/n} e^{- 2 \pi\sqrt{-1} jm/n} 
% \\
&= \sum_{k = - n/2, k \neq 0 }^{n/2-1} 
\sum_{j=0}^{n-1} \frac{1}{\abs{k}^r} e^{ 2 \pi\sqrt{-1} (k-m)  j/n} , \quad \text{by \eqref{eqn:dft_delta_fact}}
\\
&= \sum_{k = - n/2, k \neq 0  }^{n/2-1} \frac{n}{\abs{k}^r} \; \delta_{k-m \bmod n, 0} \; .
\end{align*}
This is the reason we have chosen the truncation length to $n$. 
Based on the above result, it is evident that $\widetilde{\mathfrak{C}}_r$ can be computed analytically,
\begin{align} \label{dft_of_g}
\widetilde{\bm{\mathfrak{C}}}_r := \left(\widetilde{\mathfrak{C}}_{r}(m)\right)_{m=0}^{n-1}, \quad
\text{where} \quad
\widetilde{\mathfrak{C}}_r(m) = 
\begin{cases}
0, & \text{for} \quad m=0 \\
\frac{n}{\abs{m}^r}, & \text{for} \quad m=1,\dots,n/2-1 \\
\frac{n}{\abs{n-m}^r}, & \text{for} \quad m=n/2,\dots,n-1
\end{cases}
\end{align}
where we used the fact,
\begin{align}
\label{eqn:dft_delta_fact}
\sum_{i=0}^{n-1} e^{2 \pi \sqrt{-1} i j /n} = 
\begin{cases}
\frac{1 - e^{2\pi \sqrt{-1} j n /n}}{1 - e^{2\pi \sqrt{-1} j /n}} = 0, &j \ne 0 \bmod n
\\
n, & j = 0 \bmod n.
\end{cases}
\end{align}
Having these results, we can easily back-compute $\mathfrak{C}$ using inverse discrete Fourier transform. It can be shown that inverse DFT of $\widetilde{\mathfrak{C}}_{r}$ returns $\mathfrak{C}$,
\begin{align*}
\frac{1}{n} &\sum_{m=0}^{n-1} \widetilde{\mathfrak{C}}_{r} (m) e^{2 \pi\sqrt{-1} lm/n} \\
& = \frac{1}{n} \sum_{m=0}^{n-1} 
\sum_{j=0}^{n-1} \mathfrak{C}_{r} (j/n) e^{- 2 \pi\sqrt{-1} jm/n}
e^{2 \pi\sqrt{-1} lm/n}, \quad \text{by \eqref{eqn:dft_delta_fact}}  \\
% & = \frac{1}{n} \sum_{j=0}^{n-1} \mathfrak{C}_{r} (j/n) \sum_{m=0}^{n-1} e^{2 \pi\sqrt{-1} (l-j)m/n} \\
& = \frac{1}{n}  \sum_{j=0}^{n-1} \mathfrak{C}_{r} (j/n) n \delta_{(l-j) \bmod n, 0} \\
& = \mathfrak{C}_{r} (l/n), \quad \text{for} \quad l=0,\dots,n-1
\end{align*}
This implies that to compute $n$ values of $\biggl( C_{\vtheta, n}(\vx_i, \vx_1) \biggr)_{i=1}^n$, we need to have the number of samples and the truncation length the same. 
The above results are summarized as an algorithm to compute $\mathfrak{C}$ using FFT in Algorithm~\ref{algorithm-cont-val-kernel-order}.
\begin{algorithm}
	\caption{The kernel with continuous valued order}\label{algorithm-cont-val-kernel-order}
	\begin{algorithmic}[1]
	
	\Require
	Number of points to use, $n$; 

	\State Analytically compute $ \widetilde{\bm{\mathfrak{C}}}_r $ in \eqref{eqn:trunc_series_kernel}, the discrete Fourier transform of ${\bm{\mathfrak{C}}}_r$ using \eqref{dft_of_g}
	\State Take the inverse FFT of $\widetilde{\bm{\mathfrak{C}}}_r$ to get ${\bm{\mathfrak{C}}}_r$
	
	\State Using ${\bm{\mathfrak{C}}}_r$ compute the truncated series of kernel of truncation length $n$ using \eqref{eqn:trunc_series_kernel}

	%\State Analytically compute $ \widetilde{\bm{\mathfrak{C}}}_r = \left(\widetilde{\mathfrak{C}}_{r}(m)\right)_{m=0}^{n-1}$, the discrete Fourier transform of $\left(\mathfrak{C}_r(j/n)\right)_{j=0}^{n-1}$ using \eqref{dft_of_g}
	% \State Take the inverse FFT of $\left(\widetilde{\mathfrak{C}}_{r}(m)\right)_{m=0}^{n-1}$ to get $\left(\mathfrak{C}_r(j/n)\right)_{j=0}^{n-1}$
	\end{algorithmic}

\end{algorithm}

In Algorithm \ref{algorithm-cont-val-kernel-order}, the computational cost of  computing ${\bm{\mathfrak{C}}}_r$ % $\left(\mathfrak{C}_r(j/n)\right)_{j=0}^{n-1}$ 
is $\Order(n \log n)$ instead of $\Order(n^2)$. Plugging-in the values of ${\bm{\mathfrak{C}}}_r$ in \eqref{eqn:trunc_series_kernel} gives the kernel. Another major benefit is that the FFT approach in Algorithm \ref{algorithm-cont-val-kernel-order} is the computations are numerically more stable than the direct sum approach. Please note that these kernels evolve with the truncation length $n$. The larger $n$ value the closer the kernel resembles the original infinite series kernel. One disadvantage is, the truncated series kernels obtain algebraic order decay at best. The infinite series kernel with little modification can be enhanced to obtain exponential decay as shown next.

\Subsection{Exponentially decaying kernel}
\label{sec:exp_decay_kernel}
We propose the following alternative form of the kernel. This kernel can provide exponential decay,
\begin{align*}
C_\vtheta(x, t) = & 1 + \eta \sum_{k \in \mathbb{Z}, k \neq 0 } q^{\abs{k}}  
e^{ 2 \pi\sqrt{-1} k (x-t)}, \quad \text{with} \quad 0 < q < 1
\end{align*}
where $q$ is used to denote the kernel order to distinguish it from the notation in \eqref{eqn:trunc_series_kernel}. This can be rewritten as
\begin{align*}
C_\vtheta(x, t) = & 1 + \eta \sum_{k \in \mathbb{Z}, k \neq 0 } 
e^{ 2 \pi\sqrt{-1} k (x-t) + \abs{k} \log(q)}
\\
=& 1 + \eta 
\left(
\sum_{k=1}^\infty e^{ 2 \pi\sqrt{-1} k (x-t) + \abs{k} \log(q)} 
+
\sum_{k=\infty}^{-1} e^{ 2 \pi\sqrt{-1} k (x-t) + \abs{k} \log(q)}
\right)
\\
=& 1 + \eta 
\left(
\sum_{k=1}^\infty e^{ 2 \pi\sqrt{-1} k (x-t) + \abs{k} \log(q)} 
+
\sum_{k=-\infty}^{-1} e^{ 2 \pi\sqrt{-1} k (x-t) + \abs{k} \log(q)}
\right)
\\
=& 1 + \eta 
\left(
\underbrace{
	\sum_{k=1}^\infty e^{ 2 \pi\sqrt{-1} k (x-t) + \abs{k} \log(q)} }_{*}
+
\sum_{k=1}^{\infty} e^{ -2 \pi\sqrt{-1} k (x-t) + \abs{k} \log(q)}
\right).
\end{align*}
Let us focus on the first term $(*)$ within the parenthesis in the previous equation,
\begin{align*}
\sum_{k=1}^\infty e^{ 2 \pi\sqrt{-1} k (x-t) + \abs{k} \log(q)} & =
\sum_{k=1}^\infty \left[e^{ 2 \pi\sqrt{-1} (x-t) +  \log(q)} \right]^k
\\
& = \frac{e^{ 2 \pi\sqrt{-1} (x-t) +  \log(q)}}{1- e^{ 2 \pi\sqrt{-1} (x-t) +  \log(q)}}
= \frac{1}{ e^{- 2 \pi\sqrt{-1} (x-t) -  \log(q)} -1 }
\\
& =\frac{1}{ q^{-1} e^{- 2 \pi\sqrt{-1} (x-t)} -1 }
\end{align*}
Using this result
\begin{align*}
C_\vtheta(x, t) &= 
1 + \eta 
\left(
\frac{1}{ q^{-1} e^{- 2 \pi\sqrt{-1} (x-t)} -1 }
+
\frac{1}{ q^{-1} e^{ 2 \pi\sqrt{-1} (x-t)} -1 }
\right)
\\
&= 
1 + \eta 
\left(
\frac{q^{-1} \left(e^{2 \pi\sqrt{-1} (x-t) }+ e^{ -2 \pi\sqrt{-1} (x-t)}\right) -2 }
{q^{-2} - q^{-1} \left(e^{ 2 \pi\sqrt{-1} (x-t)} + e^{ -2 \pi\sqrt{-1} (x-t)}\right) + 1 }
\right)
\\
&= 
1 + \eta 
\left(
\frac{2 q^{-1} \cos({2 \pi\sqrt{-1} (x-t) }) -2 }
{q^{-2} - 2 q^{-1} \cos({ 2 \pi\sqrt{-1} (x-t)})  + 1 }
\right)
\\
&= 
1 + 2 \eta q
\left(
\frac{ \cos({2 \pi\sqrt{-1} (x-t) }) - q }
{q^{2} - 2 q \cos({ 2 \pi\sqrt{-1} (x-t)})  + 1 }
\right).
\end{align*}
Using the fact $\cos^2(t) + \sin^2(t) = 1$,
\begin{align*}
C_\vtheta(x, t) &= 
1 + 2 \eta q
\left(
\frac{ \cos({2 \pi\sqrt{-1} (x-t) }) - q }
{ \left[\cos({ 2 \pi\sqrt{-1} (x-t)})-q\right]^2 + \sin^2({ 2 \pi\sqrt{-1} (x-t)}) }
\right),
\end{align*}
which shows that the kernel order $q$ can be continuously varied while searching for the optimal value. 
The hyperparameters need to be $\eta > 0$ and $ 0 < q < 1$ while searching for the optimum value, so we use the transformations demonstrated in Section~\ref{sec:kernel_param_search} to map the values to or from $\reals$, where the search is usually done.
% \begin{align*}
% q: & \quad \frac{1}{1 + e^{-t}} & :(-\infty, \infty) \to (0, 1)
% \\
% \eta: & \quad e^{t} & :(-\infty, \infty) \to (0, \infty).
% \end{align*}
One disadvantage of this kernel is that it is very sensitive to the changes in kernel order $q \in (0,1)$, for even small values, which might cause the hyperparameter search to miss the global minima.

\Section{Summary}

We summarize the results of this and the previous chapter as a theorem below. 
% In comparison to Algorithm~\ref{algorithm1}, the second and third components of the computational cost of Algorithm~\ref{algorithm2} are substantially reduced.
\begin{theorem}
Let $\mCtheta$ be any symmetric, positive definite, shift-invariant covariance kernel of the form \eqref{eq:shInv}, where $K_\vtheta$ has period one in every variable. Furthermore, let $K_\vtheta$ be scaled to satisfy \eqref{addAssump}. When matched with rank-1 lattice data-sites, $\mCtheta$ must satisfy assumptions \eqref{fastcompAssump}. The cubature, $\hmu$, is just the sample mean. 
The \emph{fast Fourier transform} (FFT) can be used to expedite the estimates of $\vtheta$ in \eqref{thetaSimple} and the credible interval widths \eqref{fastStoppingCriterions} in $\Order(n \log n)$ operations. 
% so that Algorithm \ref{algorithm2} has a computational cost which is the sum of the following:
% so that the computational cost is the sum of the following:

\end{theorem}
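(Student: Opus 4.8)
The plan is to assemble the claim entirely from structural facts already derived in this chapter: verify the three hypotheses of \eqref{fastcompAssump}, check the normalization \eqref{addAssump}, and then quote Theorem~\ref{thm:fastparam}. First I would note that for a shift-invariant kernel \eqref{eq:shInv} paired with a rank-1 lattice \eqref{eqn:lattice_def} one has $\vx_i - \vx_j \bmod \vone = \vh(\phi(i-1) - \phi(j-1)) \bmod \vone$, so that $\mCtheta = \mP \mK_\vtheta \mP^T$ with $\mP$ the permutation matrix \eqref{PermMat} and $\mK_\vtheta$ the matrix appearing in \eqref{shInvKernGramMatrixPermut}. The crucial point, using \eqref{phiprop} together with $\vh$ integer-valued and $K_\vtheta$ of period one, is that $K_\vtheta(\vh(i-j)/n \bmod \vone)$ depends on $i,j$ only through $(i-j) \bmod n$, so $\mK_\vtheta$ is circulant; its eigendecomposition \eqref{Keig} is the standard DFT one $\mK_\vtheta = \tfrac1n \mW \mLambda_\vtheta \mW^H$, and conjugating by $\mP$ yields \eqref{Clateig}, namely $\mCtheta = \tfrac1n \mV \mLambda_\vtheta \mV^H$ with $\mV = \mP \mW \mP^T$. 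This exhibits $\mV$ analytically, giving \eqref{fastcompAssumpA}; and since $\mW^H \vb$ is a length-$n$ inverse DFT while $\mP$, $\mP^T$ are $\Order(n)$ reshufflings, $\mV^H \vb = \mP \mW^H \mP^T \vb$ costs $\Order(n \log n)$, giving \eqref{fastcompAssumpC}.

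For \eqref{fastcompAssumpB} I would observe that $n\phi(0) = 0$ forces the first row and first column of $\mP$ to coincide with those of the identity; hence the first column of $\mV$ equals $\mP \mW$ applied to the first coordinate vector, which is $\mP \vone = \vone$, and likewise its first row is $\vone^T \mP^T = \vone^T$. Thus $\vv_1 = \vV_1 = \vone$, and positive definiteness of $\mCtheta$ (assumed in \eqref{FJH:eq:CondPosDef}) guarantees $\lambda_i > 0$, which is all that is needed for the formulas that divide by the $\lambda_i$. Next, for the normalization: for a shift-invariant $C$ with $K_\vtheta$ periodic, $\int_{[0,1]^d} C(\vt,\vx)\,\dif{\vt} = \int_{[0,1]^d} K_\vtheta(\vu)\,\dif{\vu}$ is a positive constant independent of $\vx$, so rescaling $K_\vtheta$ by it delivers \eqref{addAssump}, i.e. $c_{0\vtheta} = 1$ and $\vc_\vtheta = \vone$, equivalently $\widetilde{\vc} = \mV^H \vone = (n,0,\dots,0)^T$.

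With \eqref{fastcompAssump} and \eqref{addAssump} in hand, Theorem~\ref{thm:fastparam} applies verbatim: \eqref{muhatGCV-FB-MLE-Simple} shows $\hmu_\MLE = \hmu_\full = \hmu_\GCV = \widetilde{y}_1/n = \tfrac1n\sum_{i=1}^n y_i$, the sample mean; the optimization objectives for $\vtheta$ reduce to the forms in \eqref{fastTheta}; and the credible-interval half-widths reduce to \eqref{errSimple}. A single FFT (a permuted DFT via $\mV^H = \mP \mW^H \mP^T$) produces $\widetilde{\vy} = \mV^H \vy$ and, via \eqref{eqn:fast_transform_to_eigvalues}, the eigenvalue vector $\vlambda = \widetilde{\vC}_1$ from the first column of $\mCtheta$; every remaining quantity in \eqref{fastTheta} and \eqref{errSimple} is a sum of $\Order(n)$ elementary functions of $\{\widetilde y_i\}$ and $\{\lambda_i\}$, so each evaluation costs $\Order(n\log n)$, dominated by the FFT. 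That proves the final clause.

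I expect the only genuinely delicate step to be the circulant reduction — carrying the bookkeeping through so that the van der Corput permutation really does convert the shift-invariant Gram matrix into a circulant matrix with the stated DFT eigendecomposition; once \eqref{Clateig} is secured, the rest is a routine combination of \eqref{addAssump} with Theorem~\ref{thm:fastparam}.
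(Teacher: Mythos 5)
Your proposal is correct and follows essentially the same route as the paper: reduce the shift-invariant Gram matrix via the van der Corput permutation $\mP$ to the circulant matrix $\mK_\vtheta$, use its DFT eigendecomposition \eqref{Keig} to obtain \eqref{Clateig}, and then invoke the general fast-transform results (Theorem~\ref{thm:fastparam} with \eqref{addAssump}) for the sample-mean cubature and the $\Order(n \log n)$ cost. Your explicit verification of \eqref{fastcompAssumpB} from the first row/column of $\mP$ is a detail the paper leaves implicit, but it is consistent with its argument.
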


Although the third part of the computational cost has the largest dependence on $n$, in practice it need not be the largest contributor to the computational cost.  If function values are the result of an expensive simulation, then the first part may consume most of the computation time.

We have implemented the fast adaptive Bayesian cubature algorithm in MATLAB as part of the Guaranteed Adaptive Integration Library (GAIL) \cite{ChoEtal17b} as \allowbreak \\ \code{cubBayesLattice\_g}. This algorithm uses the kernel defined in  \eqref{the_kernel_eqn_bernoulli} with  $r=1,2$ or the continuous valued order kernel \eqref{eqn:trunc_series_kernel}, and the periodizing variable transforms in \secref{sec:period_var_tx}. The rank-1 lattice node generator is taken from \cite{NuyMagic} (\code{exod2\_base2\_m20}).

\Section{Periodizing Variable Transformations}
\label{sec:period_var_tx}

The shift-invariant covariance kernels underlying our \code{cubBayesLattice\_g} \\
Bayesian cubature  assume that the integrand has a degree of periodicity, with the smoothness assumed depending on the smoothness of the kernel.  
In other-words, non-periodic functions do not live in the space spanned by the shift-invariant covariance kernels.
While integrands arising in practice may be smooth, they might not be periodic.  
% Variable transformations can be used to ensure periodicity.
Variable transformation or periodization transform techniques are typically used to enforce the periodicity in multi-dimensional numerical integrations where boundary conditions needs to be enforced. These transformations could be either polynomial, exponential and also trigonometric in nature. Some of the most popular transformation are provided here for reference. 

Suppose that the original integral has been expressed as 
\begin{equation*}
\mu := \int_{[0,1]^d} g(\vt) \, \dif \vt,
\end{equation*}
where $g$ has sufficient smoothness, but lacks periodicity.  
The goal is to transform the integral above to the form of \eqref{eqn:defn_mu}, where the integrand $f$---and perhaps its derivatives---are  periodic.  

The Baker's transform, also called tent transform,
\begin{align} \label{eq:bakerTrans}
\vPsi: \vx \mapsto (\Psi(x_1),  \ldots, \Psi(x_d)),  \quad \Psi(x)  =1 - 2 \abs{x - 1/2},
\end{align}
allows us to write $\mu$ in the form of \eqref{eqn:defn_mu}, where $f(\vx) = g(\vPsi(\vx))$.  
Since  $\Psi'(x)$ is not continuous, $f$ does not have continuous derivatives. 
% We must emphasize, $\Psi(x)$ for the Baker's transform does not have any derivative, so we presented it separately.

A family of smoother variable transforms that can also preserve continuity of derivatives from the original integrand $g$ takes the form
\begin{subequations} %\label{eq:varTrans}
	\begin{equation*}
	\vPsi: \vx \mapsto (\Psi(x_1),  \ldots, \Psi(x_d)), \quad \Psi:[0,1] \mapsto [0,1].
	\end{equation*}
	This allows us to write $\mu$ in the form of \eqref{eqn:defn_mu} with
	\begin{equation*}
	f(\vx) = g(\vPsi(\vx)) \prod_{\ell = 1}^d \Psi'(x_l).
	\end{equation*}
\end{subequations}
For $r \in \natzero$, if the following hold:
\begin{itemize}
	\item $\Psi \in C^{r+1}[0,1]$,
	\item  $\lim_{x \downarrow 0}x^{-r-1}\Psi'(x) = \lim_{x \uparrow 1} (1-x)^{-r-1}\Psi'(x) = 0$, and 
	\item $g \in C^{(r, \ldots, r)}[0,1]^d$,
\end{itemize}
then $f$ has continuous, periodic mixed partial derivatives of up to order $r$ in each direction. 
%If $\Psi$ is sufficiently smooth, $\lim_{x \downarrow 0}x^{-r}\Psi'(x) = \lim_{x \uparrow 1} (x-1)^{-r}\Psi'(x) = 0$ for $ r \in \natzero$, and $g \in C^{(r, \ldots, r)}[0,1]^d$, then $f$ has continuous, periodic mixed partial derivatives up to order $r$ in each direction.  
Examples of this kind of transform include \cite{Sid08a}:
\begin{align*}
C^0 &: \Psi(x) =  3 x^2 - 2 x^3, \quad   \Psi'(x) = 6x(1-x), \\
C^1 & : \Psi(x) = x^3(10-15x+6x^2),  \\
&\qquad \qquad \qquad   \Psi'(x) = 30x^2(1-x)^2 \\
\text{Sidi's } C^1 & : \Psi(x) = x - \frac{\sin(2\pi x)}{2 \pi}, \\
&\qquad \qquad \qquad   \Psi'(x) = 1 - \cos(2\pi x), \\
\text{Sidi's } C^2 & : \Psi(x) = \frac {8 - 9 \cos(\pi x) + \cos(3 \pi x)}{16} ,  \\
&\qquad \qquad \Psi'(x) = \frac {3 \pi[3 \sin(\pi x) - \sin(3 \pi x)]}{16}.
\end{align*}

%%%%%%%%%%%%%%%%%%%%%%%%%%
% To be Reviewed
%%%%%%%%%%%%%%%%%%%%%%%%%%
These transforms vary in terms of computational complexity and accuracy and shall be chosen to match the covariance kernel and integrand accordingly. Choosing an optimal periodizing is a topic of future research. Baker's transform is the least complex of all which is a tent map in each coordinate. It preserves only continuity but it is easier to compute and it does not include product term up to the length dimension of the integrand, making it more numerically stable. $C^0$ is a polynomial transformation only and ensures periodicity of function. $C^1$ is a polynomial transformation and preserving the first derivative.
Sidi's $C^1$, a transform which uses trigonometric Sine, preserves the first derivative and is, in general, a better option than $C^1$.
Sidi's $C^2$, also a transform which uses trigonometric Sine, preserves up to second derivative. We use this when smoothness of Sidi's $C^1$ is not sufficient and need to preserve up to second derivative.

%Periodizing variable transforms are used for the numerical examples in \secref{sec:NumExp}.  In some cases they can speed the convergence of the Bayesian cubature. 
Periodizing variable transforms are used in the numerical examples in \secref{sec:NumExp}. In some cases, they can speed the convergence of the Bayesian cubature because they allow one to take advantage of smoother covariance kernels. 
However, there is a trade-off.  Smoother periodizing transformations tend to give integrands $f$ with larger inferred $s$ values and thus wider credible intervals.

\Chapter{Sobol' Nets and Walsh Kernels}
\label{sec:sobol_walsh}

The previous section shows an automatic Bayesian cubature algorithm using rank-1 lattice nodes and shift-invariant kernels. 
In this chapter, we demonstrate a second approach to formulate fast Bayesian transform using matching kernel and point sets. 
Scrambled Sobol' nets and Walsh kernels are paired to achieve $\Order(n^{-1 + \epsilon})$ order error convergence where $n$ is the sample size. 
Sobol' nets \cite{Sob67} are low discrepancy points, used extensively in numerical integration, simulation, and optimization. 
%Higher order digital nets with matching smoother kernels could be combined to provide higher order error convergence $\Order(N^{-\alpha + \epsilon})$ for $\alpha \ge 1$ and $\epsilon > 0$ but it is not covered in this work.
The results of this chapter can be summarized as a theorem,

% \Section{Summary}

\begin{theorem}
	Any symmetric, positive definite, digital shift-invariant covariance kernel of the form \eqref{eqn:walsh_kernel} scaled to satisfy \eqref{addAssump}, when matched with digital net data-sites, satisfies assumptions \eqref{fastcompAssump}.  The \emph{fast Walsh-Hadamard transform} (FWHT) can be used to expedite the estimates of $\vtheta$ in \eqref{thetaSimple} and the credible interval widths \eqref{fastStoppingCriterions} in $\Order(n \log n)$ operations. The cubature, $\hmu$, is just the sample mean.
\end{theorem}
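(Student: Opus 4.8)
The plan is to run the argument of Chapter~\ref{sec:shift_invariant_kernel} essentially verbatim, with the additive circle group replaced by the \emph{digital group} in which ``addition'' is the bitwise exclusive-or $\oplus$ of binary digit expansions, and with Fourier characters replaced by Walsh functions. I would record two structural facts that drive everything. (i) The first $n=2^m$ points of a digital net are, after a digital shift, a coset of an $\mathbb{F}_2$-linear subspace of the digit space; in particular the point set is closed under $\oplus$, the digital shift playing exactly the role of the random shift $\vDelta$ in the lattice construction (and serving the same purpose of keeping $\vzero$ out of the design). (ii) A digitally shift-invariant kernel of the form \eqref{eqn:walsh_kernel} satisfies $C_\vtheta(\vx,\vt)=K_\vtheta(\vx\oplus\vt)$, so the Gram-matrix entry $C_\vtheta(\vx_i,\vx_j)$ depends only on $\vx_i\oplus\vx_j$. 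Combining these, once the points are re-indexed by their digit bits---the analogue of the van der Corput reordering $i\mapsto n\phi(i-1)$ used for lattices---the Gram matrix $\mCtheta$ is \emph{dyadic-circulant}: $\mCtheta=\mP\mK_\vtheta\mP^T$ for a permutation matrix $\mP$ and a matrix $\mK_\vtheta$ whose $(i,j)$ entry depends only on $(i-1)\oplus(j-1)$.

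The crux is the eigendecomposition of $\mK_\vtheta$. Walsh functions are precisely the characters of $\mathbb{F}_2^{m}$: they take values $\pm1$, satisfy $\wal_k(\vz\oplus\vz')=\wal_k(\vz)\wal_k(\vz')$, and obey $\sum_{i=1}^n \wal_j(\vx_i)\wal_k(\vx_i)=n\delta_{jk}$ on a linear net. Hence every dyadic-circulant matrix is diagonalized by the Walsh--Hadamard matrix $\mW=\bigl(\wal_{j-1}(\vx_i)\bigr)_{i,j=1}^n$, which is real and symmetric and satisfies $\mW\mW=n\mathsf{I}$, i.e. $\mW^H=n\mW^{-1}$; expanding $K_\vtheta$ in its finite Walsh series exhibits $\mK_\vtheta=\tfrac1n\mW\mLambda_\vtheta\mW^H$ with $\mLambda_\vtheta$ diagonal, which is exactly the circulant step of Chapter~\ref{sec:shift_invariant_kernel} with $\mW$ in place of the DFT matrix. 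Conjugating by $\mP$ gives $\mCtheta=\tfrac1n\mV\mLambda_\vtheta\mV^H$ with $\mV=\mP\mW\mP^T$, matching \eqref{eqn:ftk_factor} and establishing \eqref{fastcompAssumpA}. Assumption \eqref{fastcompAssumpB}, $\vv_1=\vV_1=\vone$, follows because the trivial character $\wal_0\equiv 1$ supplies an all-ones first row and column of $\mW$ and the re-indexing is arranged to fix the first point (digit vector $\vzero$); assumption \eqref{fastcompAssumpC} holds because multiplication by $\mW$, and hence by $\mV$, is the fast Walsh--Hadamard transform, computable in $\Order(n\log n)$ operations by the $\pm1$ radix-$2$ butterfly that assembles the $2n$-point transform from two $n$-point transforms. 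Finally, scaling $K_\vtheta$ so that its mean over $[0,1]^d$ equals $1$ enforces \eqref{addAssump}, hence $c_{0\vtheta}=1$ and $\vc_\vtheta=\vone$.

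With assumptions \eqref{fastcompAssump} and \eqref{addAssump} in place, the theorem is immediate from Theorem~\ref{thm:fastparam}: every hyperparameter estimate and every credible-interval half-width there is expressed purely through the fast transforms $\widetilde{\vy}=\mV^H\vy$ and $\vlambda=\widetilde{\vC}_1=\mV^H\vC_1$, each computable by the FWHT (modulo the fixed reindexing $\mP$) in $\Order(n\log n)$ work, so the estimates of $\vtheta$ and the stopping-criterion half-widths all cost $\Order(n\log n)$; and \eqref{muhatGCV-FB-MLE-Simple} collapses the cubature to $\hmu=\widetilde{y}_1/n=\tfrac1n\sum_{i=1}^n y_i$, the sample mean.

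The main obstacle I expect is bookkeeping rather than depth: pinning down exactly which re-indexing of the Sobol' points turns $\mCtheta$ into a genuine dyadic-circulant matrix with the first point fixed; checking that the randomization used is a digital (XOR) shift rather than a general Owen scramble, since the latter destroys $\mathbb{F}_2$-linearity and hence the coset-of-a-subspace structure that \eqref{fastcompAssump} relies on; and proving carefully that the Walsh functions restricted to the net form a complete orthogonal system of characters, so that $\mW^H\mW=n\mathsf{I}$ and $\mW$ simultaneously diagonalizes every such Gram matrix. Once those are nailed down, the remainder is a mechanical transcription of the lattice argument with the FWHT replacing the FFT.
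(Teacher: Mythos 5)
Your proposal is correct, but it reaches the key eigendecomposition by a different mechanism than the thesis. You treat the Gram matrix as a group circulant over $\mathbb{F}_2^m$: using the digital-net difference property (the paper's Lemma~\ref{lemma:digital_net_prop}, $\vx_i\ominus\vx_j=\vz_{i\ominus j}$), the $(i,j)$ entry depends only on $i\oplus j$, and then the Walsh functions, being the characters of $(\mathbb{Z}/2)^m$, diagonalize any such matrix, giving $\mCtheta=\frac 1n \mV\mLambda_\vtheta\mV^H$ as in \eqref{eqn:ftk_factor} in one character-orthogonality computation. The paper instead proves this by a double induction: Theorem~\ref{thrm:block-toeplitz} shows the Gram matrix is recursively $2\times 2$ block-Toeplitz, and Theorem~\ref{thrm:hadamard_eigenvector} propagates the factorization $\mH^{(m)}\mK^{(m,q)}=\mLambda^{(m,q)}\mH^{(m)}$ up through the Kronecker recursion $\mH^{(m+1)}=\mH^{(1)}\bigotimes\mH^{(m)}$. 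Your route is more conceptual and also explains the iterative doubling of the FWHT for free, while the paper's induction is more elementary and matches the way the transform is actually computed blockwise. Two small remarks on your bookkeeping worries: unlike the lattice case, no van der Corput-style reindexing is needed here --- with the natural indexing $i=0,\dots,2^m-1$ of the digital sequence the XOR structure already holds, so your permutation $\mP$ can be taken to be the identity; and the randomization issue you flag is resolved as in the paper, since a digital shift leaves differences unchanged (Lemma~\ref{lemma:digital_net_prop}) and linear matrix (Matou\v{s}ek-type) scrambling merely replaces the generator matrices, so the scrambled points are again a digital net, whereas a full Owen scramble would indeed fall outside the theorem's hypotheses. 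With \eqref{fastcompAssump} and \eqref{addAssump} established, your appeal to Theorem~\ref{thm:fastparam} for the $\Order(n\log n)$ estimates and the sample-mean cubature is exactly how the paper concludes.
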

We introduce the necessary concepts and prove this theorem in the remaining of this chapter.

\Section{Sobol' Nets}

% Notations
% \ell - dimesnion
% i,j - point sequence index

Nets were developed to provide deterministic sample points for quasi-Monte Carlo rules \cite{Nie05a}. Nets are defined geometrically using elementary intervals, which are subintervals of the unit cube $[0,1)^d$.
The $(t,m, d)$-nets in base $b$, introduced by Niederreiter, 
%are point sets consisting of $n=b^m$ points in $[0, 1)^d$, 
whose quality is governed by $t$. Lower values of $t$ correspond to $(t,m, d)$-nets of higher quality \cite{Bald10a}.

\begin{defn}
	\label{defn:tmd_net}
	Let $\mathcal{A}$ be the set of all elementary intervals $\mathcal{A} \subset [0, 1)^d$ where
	$\mathcal{A} = \prod_{\ell=1}^d [\alpha_\ell b^{-\gamma_\ell} , (\alpha_\ell + 1) b^{-\gamma_\ell})$, 
	% with integers $d \ge 1, b \ge 2, \gamma_\ell \ge 0$,
	with $d,b,\gamma_\ell \in \naturals, b \ge 2$ 
	and $b^{\gamma_\ell}
	> \alpha_\ell \ge 0$. For $m,t \in \naturals, m \ge t \ge 0$, the point set $\mathcal{P}_m \in [0, 1)^d$ with $n = b^m$ points is a $(t, m, d)$ -- net in base $b$ if every $\mathcal{A}$ with volume $b^{t-m}$ contains $b^t$ points of $\mathcal{P}_m$.
\end{defn}

Digital $(t,m, d)$-nets are a special case of $(t,m, d)$-nets, constructed using matrix-vector multiplications over finite fields. Digital sequences are infinite length digital nets, i.e., the first $n=b^m$ points of a digital sequence comprise a digital net for all integer $m \in \naturals_0$.
% \JRNote{Higher order nets details serves no purpose}
% \begin{defn}
%A $(t,m,d)$-net in base $b$ is a set of $z_i, \; i=0,1,\dots,$ of $b^m$ points of $[0,1)^d$  with the property that every elementary interval in base $b$ of volume $b^{t-m}$ contains precisely $b^t$ points from $z_i$. Here $d \ge 1$, $b\ge 2$ and $0 \le t \le m$. 
% \end{defn}

\begin{defn}
For any non-negative integer $i = \dots i_3 i_2 i_1(\textup{base} \, b)$, define the $\infty \times 1$ vector $\vec{\imath}$ as the vector of its digits, that is, $\vec{\imath} = (i_1, i_2, \dots)^T$. 
For any point $z = 0.z_1 z_2 \dots (\textup{base}\, b) \in [0, 1)$, define the $\infty \times 1$ vector of the digits of $z$, that is, $\vec{z} = (z_1, z_2, \dots)^T$. 
Let $ \mathsf{G}_1, \dots , \mathsf{G}_d$ denote predetermined $\infty \times \infty$ generator matrices. 
The digital sequence in \textup{base} $b$ is $\{\vz_0, \vz_1, \vz_2, \dots\}$, where each $\vz_i = ( z_{i1}, \dots , z_{id})^T \in [0, 1)^d$ is defined by
\begin{align*}
\vec{z}_{i\ell} = \mathsf{G}_{\ell} \, \vec{\imath}, \quad \ell = 1, \dots, d, \quad i = 0, 1, \dots \;.
\end{align*}
The value of $t$ as mentioned in Definition \ref{defn:tmd_net} depends on the choice of $\mathsf{G}_{\ell}$.
\end{defn}

% Digital sequences are defined using digitwise operations. 
Digital nets have a group structure under digitwise addition, which is a very useful property exploited in our algorithm, especially to develop a fast Bayesian transform that speedups computations.
Digitwise addition, $\oplus$, and subtraction $\ominus$, are defined in terms of $b$-ary expansions of points in $[0, 1)^d$,
\begin{align*}
\vz \oplus \vy = \left( \sum_{j=1}^\infty [z_{\ell j} + y_{\ell j} \bmod b] b^{-j} \bmod 1 \right)_{\ell=1}^d,
\\
\vz \ominus \vy = \left( \sum_{j=1}^\infty [z_{\ell j} - y_{\ell j} \bmod b] b^{-j} \bmod 1 \right)_{\ell=1}^d,
\end{align*}
where
\begin{align*}
\vz = \left( \sum_{j=1}^{\infty} z_{\ell j}b^{-j}\right)_{\ell=1}^d, \quad
\vy = \left( \sum_{j=1}^{\infty} y_{\ell j}b^{-j}\right)_{\ell=1}^d, \quad
z_{\ell j}, y_{\ell j} \in \{0,\cdots,b-1\}.
\end{align*}

Similarly for integer values in $\naturals_0^d$, the digitwise addition, $\oplus$, and subtraction $\ominus$, are defined in terms of their $b$-ary expansions,
\begin{align*}
\vk \oplus \vl = \left( \sum_{j=0}^\infty [k_{\ell j} + l_{\ell j} \bmod b] b^{j} \bmod 1 \right)_{\ell=1}^d,
\\
\vk \ominus \vl = \left( \sum_{j=0}^\infty [k_{\ell j} - l_{\ell j} \bmod b] b^{j} \bmod 1 \right)_{\ell=1}^d,
\end{align*}
where
\begin{align*}
\vk = \left( \sum_{j=0}^{\infty} k_{\ell j}b^{j}\right)_{\ell=1}^d, \quad
\vl = \left( \sum_{j=0}^{\infty} l_{\ell j}b^{j}\right)_{\ell=1}^d, \quad
\vk_{\ell j}, \vl_{\ell j} \in \{0,\cdots,b-1\}.
\end{align*}

Let $\{\vz_i\}_{i=0}^{b^m-1}$ be a digital net. Then
\begin{align*}
\forall i_1, i_2 \in \{0,\cdots,b^m-1\}, \quad \vz_{j_1} \oplus \vz_{i_2} = \vz_{i_3}, \quad \text{for some} \; i_3 \in \{0,\cdots,b^m-1\}.
\end{align*}

The following very useful result, which will be further used to obtain the fast Bayesian transform, arises from the fundamental property of digital nets.

\begin{lemma}
\label{lemma:digital_net_prop}
Let $\{\vz_i\}_{i=0}^{b^{m}-1}$ be the digital-net and the corresponding digitally shifted net be $\{\vx_i\}_{i=0}^{b^{m}-1}$, i.e.,
\begin{align*}
\vec{x}_{i \ell} = \vec{z}_{i \ell} + \vec{\Delta}_l \bmod 1,
\end{align*}
where $\vec{x}_{i \ell}$ is the $\ell$th component of $i$th digital net and $\vec{\Delta}_{\ell}$ is the digital shift for the $\ell$th component. 
% Let $\vx_i, \vx_j$ be digitally shifted digital nets and $\vz_i, \vz_j$ be the corresponding un-shifted digital nets, 
Then,
\begin{align}
\label{eqn:digital_shift_prop}
\vx_i \ominus \vx_j = \vz_i \ominus \vz_j = \vz_{i \ominus j}, \quad \forall i,j \in \naturals_0. 
\end{align}
 Also the digital subtraction is symmetric,
\begin{align}
\label{eqn:digital_net_symmetric_prop}
\vx_i \ominus \vx_i = \boldsymbol{ 0}, \qquad 
\vx_i \ominus \vx_j = \vx_j \ominus \vx_i, \quad \forall i,j \in \naturals_0.
\end{align}
\end{lemma}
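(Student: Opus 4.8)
The plan is to reduce both displayed identities to carry-free arithmetic on digit vectors over $\mathbb{Z}_b$, where the digital-net construction is linear. I would organize the argument in three short steps: (i) show the digital shift cancels under $\ominus$; (ii) use the generator-matrix definition to get $\vz_i\ominus\vz_j=\vz_{i\ominus j}$; (iii) deduce the symmetry relations. For step (i), fix a coordinate $\ell$ and a digit position; let $a$ be that digit of $\vec{z}_{i\ell}$, let $a'$ be the matching digit of $\vec{z}_{j\ell}$, and let $c$ be the corresponding digit of $\vec{\Delta}_\ell$. By the definition of the digital shift the matching digits of $\vec{x}_{i\ell}$ and $\vec{x}_{j\ell}$ are $(a+c)\bmod b$ and $(a'+c)\bmod b$, so by the definition of $\ominus$ the matching digit of $\vx_i\ominus\vx_j$ is $\big[((a+c)\bmod b)-((a'+c)\bmod b)\big]\bmod b=(a-a')\bmod b$, because reduction modulo $b$ commutes with subtraction. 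This is exactly the matching digit of $\vz_i\ominus\vz_j$, and since the coordinate and digit position were arbitrary, $\vx_i\ominus\vx_j=\vz_i\ominus\vz_j$.

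For step (ii) I would invoke the defining relation $\vec{z}_{i\ell}=\mathsf{G}_\ell\,\vec{\imath}$, a matrix--vector product over $\mathbb{Z}_b$. Digitwise subtraction of $\vec{z}_{i\ell}$ and $\vec{z}_{j\ell}$ is componentwise subtraction in $\mathbb{Z}_b$, and $\vec{v}\mapsto\mathsf{G}_\ell\vec{v}$ is $\mathbb{Z}_b$-linear, so $\vec{z}_{i\ell}\ominus\vec{z}_{j\ell}=\mathsf{G}_\ell(\vec{\imath}-\vec{\jmath})$ with arithmetic in $\mathbb{Z}_b$. But $\vec{\imath}-\vec{\jmath}$, reduced componentwise mod $b$, is precisely the digit vector of $i\ominus j$ by the definition of digitwise integer subtraction (no borrows, so $i\ominus j$ is again a well-defined non-negative integer); hence $\vec{z}_{i\ell}\ominus\vec{z}_{j\ell}=\mathsf{G}_\ell\,\overrightarrow{i\ominus j}=\vec{z}_{(i\ominus j)\ell}$. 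Collecting over $\ell$ gives $\vz_i\ominus\vz_j=\vz_{i\ominus j}$, which combined with step (i) yields \eqref{eqn:digital_shift_prop}.

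For step (iii), setting $j=i$ in \eqref{eqn:digital_shift_prop}: every digit of $i\ominus i$ is $(a-a)\bmod b=0$, so $i\ominus i=0$; moreover $\vec{z}_{0\ell}=\mathsf{G}_\ell\vec{0}=\vec{0}$, so $\vz_0=\boldsymbol{0}$ and therefore $\vx_i\ominus\vx_i=\vz_{i\ominus i}=\boldsymbol{0}$. For the commutativity $\vx_i\ominus\vx_j=\vx_j\ominus\vx_i$ I would specialize to the base $b=2$ setting relevant to Sobol' nets (as throughout this chapter): there $-1\equiv 1\pmod 2$, so digitwise subtraction coincides with digitwise addition (bitwise XOR) and is manifestly symmetric, giving $i\ominus j=j\ominus i$ and hence $\vx_i\ominus\vx_j=\vz_{i\ominus j}=\vz_{j\ominus i}=\vx_j\ominus\vx_i$.

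The only genuinely delicate point is this last one: for a general base $b>2$, digitwise subtraction is not symmetric, so the commutativity half of \eqref{eqn:digital_net_symmetric_prop} truly relies on $b=2$ (or must be read with that restriction); the rest of the lemma holds for arbitrary $b$. The other place needing a moment of care is making the "carries are irrelevant" reduction in step (i) fully rigorous, which is exactly the one-line modular computation $\big[((a+c)\bmod b)-((a'+c)\bmod b)\big]\bmod b=(a-a')\bmod b$ shown above.
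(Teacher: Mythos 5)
Your proposal is correct and rests on the same key step as the paper's own proof, namely the $\mathbb{Z}_b$-linearity of the generator-matrix map giving $\vz_i\ominus\vz_j=\mathsf{G}_\ell(\vec{\imath}-\vec{\jmath})\bmod b=\vz_{i\ominus j}$. The paper dismisses the shift cancellation and the symmetry claims as ``obvious from the definition,'' so your explicit digit-level computation $\bigl[((a+c)\bmod b)-((a'+c)\bmod b)\bigr]\bmod b=(a-a')\bmod b$ and your observation that the commutativity $\vx_i\ominus\vx_j=\vx_j\ominus\vx_i$ genuinely requires $b=2$ (the Sobol' case used throughout the chapter) are welcome refinements rather than deviations.
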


\begin{proof}

The proof can be obtained from the definition of digital nets which stated that the digital nets are obtained using generator matrices, $\vec{z}_{i \ell} = \mG_{\ell} \, \vec{\imath} \bmod b$. Rewriting the subtraction using the generating matrix provides the result,
\begin{align*}
\vec{z}_{i \ell} - \vec{z}_{j\ell} \bmod b & = (\mG_{\ell} \vec{\imath} \bmod b) - (\mG_{\ell} \vec{\jmath} \bmod b) \\
& = (\mG_{\ell} \vec{\imath} - \mG_{\ell} \vec{\jmath} ) \bmod b \\
& = \mG_{\ell} (\vec{\imath} - \vec{\jmath} ) \bmod b \\
& = \mG_{\ell} (\overrightarrow{i \ominus j} ) \bmod b \\
& = \vec{z}_{i \ominus j \; {\ell}}.
\end{align*}
The rest of the lemma is obvious from the definition of digital nets.
\end{proof}

% \JRNote{Define and explain scrambling}
We chose digitally shifted and scrambled nets \cite{HicYue00} for our Bayesian cubature algorithm. Digital shifts help to avoid having nodes at the origin, similar to the random shift used with lattice nodes.
\iffalse
Let $\{\vx_0, \vx_1, \vx_2, ...\}$ denote the randomly scrambled version of the original sequence $\{\vz_0, \vz_1, \vz_2, \dots \}$. Let $x_{i{\ell}j}$ denote the $j$\textup{th} digit of the $\ell$\textup{th} component of $\vx_i$, and similarly for $z_{i{\ell}j}$. Then
\begin{align*}
x_{i{\ell}1} = \pi_{\ell}( z_{i{\ell}1}), \quad x_{i{\ell}2} = \pi_{z_{i{\ell}1}} ( z_{i{\ell}2}), \quad x_{i{\ell}3} = \pi_{z_{i{\ell}1}, z_{i{\ell}2}} ( z_{i{\ell}3}), \quad \dots , \\
x_{i{\ell}j} = \pi_{z_{i{\ell} 1}, z_{i{\ell}2}, \dots, z_{i{\ell}j-1}} ( z_{i{\ell}j}), \quad \dots , \qquad \qquad \qquad
\end{align*}
where the $\pi_{a_1a_2 \dots}$ are random permutations of the elements in $\{0,\dots,b-1\}$ chosen uniformly and mutually independent. 
\fi
Scrambling helps to eliminate bias while retaining the low-discrepancy properties.
A proof that a scrambled net preserves the property of $(t, m, d)$-net almost surely can be found in Owen \cite{Owe95}. The scrambling method proposed by Matou\v{s}ek \cite{Mat98} is preferred since it is more efficient than the Owen's scrambling.

Sobol' nets \cite{Sob76} are a special case of $(t,m, d)$-nets when base $b=2$. 
An example of $64$ Sobol' nets in $d=2$ is given in \figref{fig:sobol-fig}.  The even coverage of the unit cube is ensured by a well chosen generating matrix.  The choice of generating vector is typically done offline by computer search.  See \cite{KuoNuyens2016} and \cite{NuySoft} for more on generating matrices. We use randomly scrambled and digitally shifted Sobol' sequences in this research \cite{HonHic00a}. 

\begin{figure}[htp]
	\label{fig:sobol-fig}
	\centering
	\includegraphics[width=0.8\linewidth]{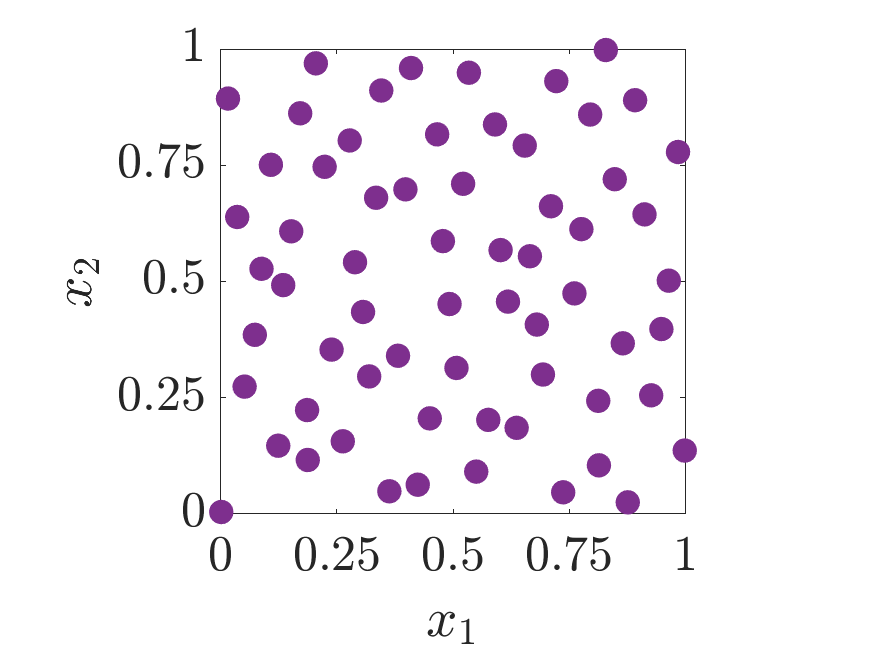}
	\caption{Example of a scrambled Sobol' node set  in $d=2$.  This plot can be reproduced using \code{PlotPoints.m}. \label{sobolfig} }
\end{figure}

\Section{Walsh Kernels}

Walsh kernels are product kernels based on the Walsh functions. We introduce the necessary concepts in this section.

\Subsection{Walsh functions}
Like the Fourier transform used with lattice points (Section~\ref{sec:shift_invar_kern}), the Walsh-Hadamard transform, which we will simply call Walsh transform, is used for the digital nets. The Walsh transform is defined using Walsh functions. Recall $\naturals_0 := \lbrace 0,1,2,\cdots \rbrace$.
The one-dimensional Walsh functions in base $b$ are defined as
\begin{align}
\label{eqn:walsh_func}
\textup{wal}_{b,k}(x) := e^{2\pi \sqrt{-1} (x_1 k_0 + x_2 k_1 + \cdots)/b} 
=
e^{2\pi \sqrt{-1} {\vec{k}}^T{\vec{x}}/b},
\end{align}
for $x \in [0,1)$ and $k \in \naturals_0$ and the unique base $b$ expansions 
$x = \sum_{j \ge 1} x_j b^{-i} = (0.x_1 x_2 \cdots)_b$, $\vec{x} =  (x_1,x_2,\cdots )^T$
$k = \sum_{j \ge 0} k_j b^{j} = ( \cdots k_1 k_0)_b$, $\vec{k} =  (k_0,k_1,\cdots )^T$, and ${\vec{k}}^T{\vec{x}} = x_1 k_0 + x_2 k_1 + \cdots$
where the number of digits used in \eqref{eqn:walsh_func} are limited to the length required to represent $x$ or $k$, i.e., $\max \left( {\ceil{ -\log_b{x}}, \ceil{\log_b{k}}  } \right)$.
% with $n$ at least as large as the number of digits to represent $x$ or $k$.
Multivariate Walsh functions are defined as the product of the one-dimensional Walsh functions,
\begin{align*}
\textup{wal}_{b,\vk} (\vx) := \prod_{\ell=1}^d \textup{wal}_{b,k_\ell} (x_\ell
)
\end{align*}
As shown in \eqref{eqn:walsh_func}, for the case of $b=2$, the Walsh functions only take the values in $\{1, -1\}$, i.e., $\textup{wal}_{b,\vk} : [0,1)^d \to {\{-1, 1\}} , \; k \in \naturals_0^d$. Walsh functions form an orthonormal basis of the Hilbert space $L^2[0,1)^d$,
\begin{align*}
\int_{[0,1)^d}
\textup{wal}_{b,\vl} (\vx) \textup{wal}_{b,\vk}(\vx) \dx = \delta_{\vl, \vk}, \quad \forall \vl, \vk \in \naturals_0^d
\end{align*}
Digital nets are designed to integrate certain Walsh functions without error.
Thus our Bayesian cubature algorithm integrates linear combinations of % these ideal integrands
certain Walsh functions without error. Functions that are well approximated by such linear combinations are then integrated with small errors.

% \JRNote{separate paragraph}
In this research we use Sobol' nodes which are digital nets with base $b=2$. So here afterwards base $b=2$ is assumed. % if not specified or the notation is dropped.  
In this case, the Walsh function is simply $$\textup{wal}_{2,\vk} (\vx) = (-1)^{\vec{\vk}^T \vec{\vx}}.$$

\Subsection{Walsh kernels}
Consider the covariance kernels of the form,
\begin{align}
\label{eqn:digital_shift_in_kernel}
C_{\vtheta}(\vx, \vt) = K_{\vtheta} (\vx \ominus \vt) 
\end{align}
where $\ominus$ is bitwise subtraction.
This is called a \emph{digitally shift invariant kernel} because shifting both arguments of the covariance function by the same amount leaves the value unchanged. By a proper scaling of the function $K_{\vtheta}$, it follows that assumption \eqref{addAssump} is satisfied. The function $K_{\vtheta}$ must be of the form that ensures that $C_{\vtheta}$ is symmetric and positive definite, as assumed in \eqref{FJH:eq:CondPosDef}. We drop the ${\vtheta}$ sometimes to make the notation simpler.
The Walsh kernels are of the form,
\begin{align}
\label{eqn:walsh_kernel}
K_{\vtheta} (\vx \ominus \vt) =  
\prod_{\ell=1}^d  1 + \eta_\ell \omega_{r} (x_\ell \ominus t_\ell), \quad \veta = (\eta_1, \cdots, \eta_d), \quad \vtheta = (r, \veta)
\end{align}
where $r$ is the kernel order, $\veta$ is the kernel shape parameter, and
\begin{align*}
\omega_r(x) = \sum_{k=1}^\infty 
\frac{\textup{wal}_{2,k}(x) }{2^{2r \lfloor \log_2 k \rfloor}}.
\end{align*}
Explicit expression is available for $\omega_{r}$ in the case of order $r=1$ \cite{Nuyens2013}, % and Hilbert space setup,
\begin{align}
\label{eqn:omega1}
\omega_1(x) 
% &= \prod_{l=1}^d \sum_{k=1}^\infty 
% \frac{\textup{wal}_{b,k}(x_l) }{b^{2 \lfloor \log_b k \rfloor}} 
 = 6\left( \frac 16 - 2^{\lfloor \log_2 x \rfloor -1 }\right).
\end{align}

\begin{figure}
	\centering
	\includegraphics[width=0.9\linewidth]{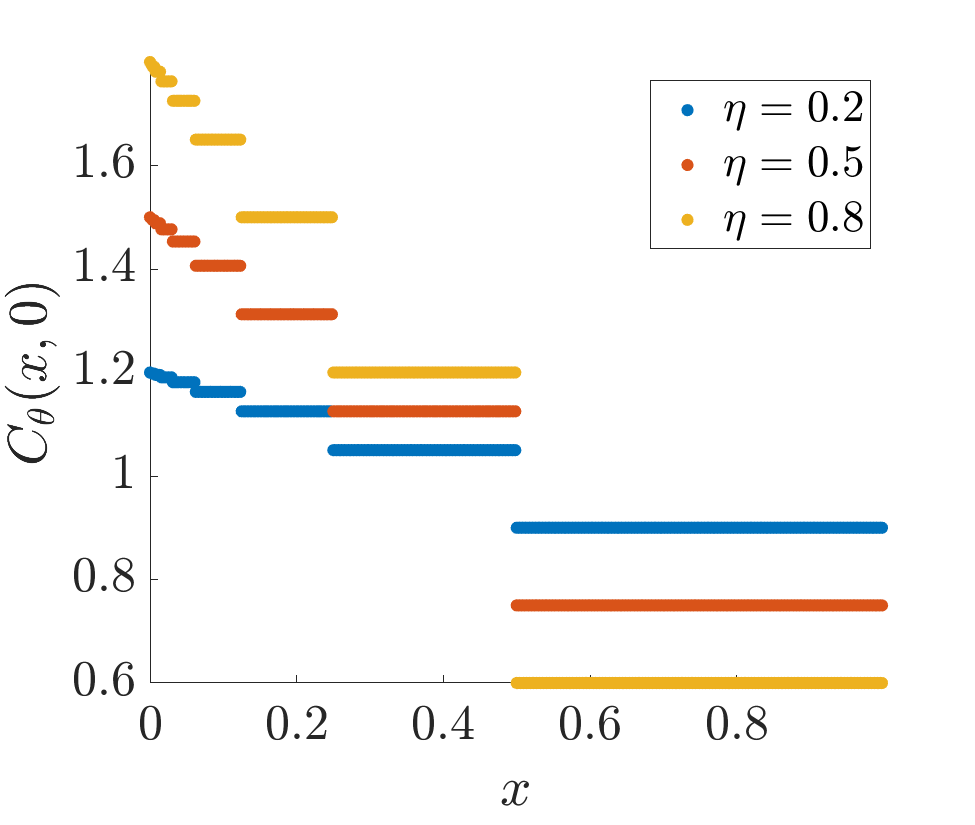}
	\caption[Walsh kernel]{Walsh kernel of order $r=1$ in dimension $d=1$. This figure can be reproduced using \code{plot\_walsh\_kernel.m}. %\JRNote{remove $r=1$ in the figure}
	}
	\label{fig:walshkernel-dim1}
\end{figure}

The \figref{fig:walshkernel-dim1} shows the Walsh kernel \eqref{eqn:walsh_kernel} of order $r=1$ in the interval $[0,1)$. Unlike the shift-invariant kernels used with lattice nodes, low order Walsh kernels are discontinuous and are only piecewise constant. Smaller $\eta_\ell$ implies lesser variation in the amplitude of the kernel. Also, the Walsh kernels are digitally shift invariant but not periodic.

\Section{Eigenvectors}

We show the eigenvectors $\mV$ in \eqref{eqn:ftk_factor} of the Gram matrix formed by the covariance kernel \eqref{eqn:walsh_kernel} and Sobol' nets are the columns of the Walsh-Hadamard matrix. First we introduce the necessary concepts.

\Subsection{Walsh transform}
% \JRNote{add subsection for walsh transform, define and explain.}
The Walsh-Hadamard transform (WHT) is a generalized class of discrete Fourier transform (DFT) and is much simpler to compute than the DFT. The WHT matrices are comprised of only $\pm 1$ values, so the computation usually involves only ordinary additions and subtractions. Hence, the WHT is also sometimes called the integer transform. In comparison, the DFT that was used with lattice nodes,  uses complex exponential functions and the computation involves complex, non-integer multiplications. 

The WHT involves multiplications by $2^m \times 2^m$ Walsh-Hadamard matrices, which is constructed recursively, starting with $\mH^{(0)} = 1$,
\begin{align}
\nonumber
\arraycolsep=1.4pt\def\arraystretch{0.9}
\mH^{(1)} &=
\begin{pmatrix}
1 & 1 \\ 1 & -1
\end{pmatrix}, \\
\nonumber
\mH^{(2)} &= 
\begin{pmatrix}
1 & 1 & 1 & 1 \\ 
1 & -1 & 1 & -1 \\
1 & 1 & -1 & -1 \\ 
1 & -1 & -1 & 1 \\
\end{pmatrix}, \\
\nonumber
& \qquad \vdots
\\
\label{eqn:hadamard_matrix}
\mH^{(m)} &= 
\begin{pmatrix}
\mH^{(m-1)} & \mH^{(m-1)} \\ \mH^{(m-1)} & -\mH^{(m-1)}
\end{pmatrix} 
= \underbrace{\mH^{(1)} \bigotimes \cdots \bigotimes \mH^{(1)}}_{m \ \text{times}} 
= \mH^{(1)} \bigotimes \mH^{(m-1)}
\end{align}
where $\bigotimes$ is Kronecker product. Alternatively for base $b=2$, these matrices can be  directly obtained by,
\begin{align*}
\mH^{(m)} % = \bigg(\exp(\pi \sqrt{-1} \vec{\imath}_m^T\vec {\jmath}_m) \bigg)_{i,j=0}^{n-1}  
= \bigg((-1)^{(\vec{\imath}^T \vec{\jmath})} \bigg)_{i,j=0}^{2^m-1},
\end{align*}
where the notation $\vec{\imath}^T \vec{\jmath}$ indicates the bitwise dot product. 

\iffalse
An example of Walsh matrix of length $n=8$ is given in Table~\ref{tab:hadamard_matrix}. 
\begin{table} % 
% \centering
\arraycolsep=1pt\def\arraystretch{0.8}
\[
%\arraycolsep=1.4pt\def\arraystretch{0.9}
\begin{array}{c|@{\quad}r@{\quad}r@{\quad}r@{\quad}r@{\quad}r@{\quad}r@{\quad}r@{\quad}r@{\quad}r}
\hhline{=========}
%\vspace{-1.5ex}
\text{Zero crossings} & \multicolumn{8}{l}{\text{Walsh function values}} \\
\hline
0&	1&  1&  1&  1&  1&  1&  1&  1 \\
4&	1& -1& -1&  1&  1& -1& -1&  1 \\
6&	1& -1&  1& -1& -1&  1& -1&  1 \\
2&	1&  1& -1& -1& -1& -1&  1&  1 \\
3&	1&  1& -1& -1&  1&  1& -1& -1 \\
7&	1& -1&  1& -1&  1& -1&  1& -1 \\
5&	1& -1& -1&  1& -1&  1&  1& -1 \\
1&	1&  1&  1&  1& -1& -1& -1& -1
\end{array}
\]
\vspace{-5ex}
\caption{Walsh transform matrix of for $n=8$, in Hadamard order  \label{tab:hadamard_matrix}}	   
\end{table}
\fi

\Subsection{Eigenvectors of $\mC$ are columns of Walsh-Hadamard matrix}
\label{sec:eigenvector_hadamard}
The Gram matrix $\mCtheta$ formed by Walsh kernels and Sobol' nodes have a special structure called  block-Toeplitz, which can be used to construct the fast Bayesian transform. 
A Toeplitz matrix is a diagonal-constant matrix in which each descending diagonal from left to right is constant. A block Toeplitz matrix is a special block matrix, which contains blocks that are repeated down the diagonals of the matrix.
%, as a Toeplitz matrix has elements repeated down the diagonal. 
We prove that the eigenvectors of $\mCtheta$ are columns of a Walsh-Hadamard matrix in two theorems. 

\begin{theorem}
\label{thrm:block-toeplitz}
Let $\left(\vx_i\right)_{i=0}^{n-1}$ be digitally shifted Sobol' nodes and $K$ be any function,
% positive definite kernel function such as \eqref{eqn:omega1} which matches Sobol' nodes.
then the Gram matrix,
\begin{align*}
\mCtheta = \bigl(C(\vx_i, \vx_j)\bigr)_{i,j=0}^{n-1} &= \bigl(K(\vx_i \ominus \vx_j)\bigr)_{i,j=0}^{n-1},   \\ & \text{where} \quad \quad n=2^m, \quad C(\vx, \vt) = K(\vx \ominus \vt), \quad  \vx, \vt \in [0,1)^d, \qquad
\end{align*}
is a $2\times 2$ block-Toeplitz matrix and all the sub-blocks and their sub-sub-blocks, etc. are also $2\times 2$ block-Toeplitz. 
\end{theorem}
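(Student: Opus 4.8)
The plan is to eliminate the kernel, the digital shift, and the dimension from the picture with one application of Lemma~\ref{lemma:digital_net_prop}, reducing the claim to a purely combinatorial statement about ``XOR\nobreakdash-indexed'' matrices, and then to verify that statement by a short induction on $m$. First I would use \eqref{eqn:digital_shift_prop}: since $(\vx_i)$ is a digital shift of the Sobol' net $(\vz_i)$, we have $\vx_i \ominus \vx_j = \vz_{i\ominus j}$ for all $i,j$, where $i\ominus j$ is the digitwise (base\nobreakdash-$2$) combination of the indices. Setting $g(k) := K(\vz_k)$ for $k\in\{0,\dots,n-1\}$, the Gram matrix becomes $\mCtheta = \bigl(g(i\ominus j)\bigr)_{i,j=0}^{n-1}$. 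So it suffices to prove: for every $g\colon\{0,\dots,2^m-1\}\to\mathbb{R}$, the matrix $M_m(g) := \bigl(g(i\ominus j)\bigr)_{i,j=0}^{2^m-1}$ is $2\times 2$ block\nobreakdash-Toeplitz and all its sub\nobreakdash-blocks, sub\nobreakdash-sub\nobreakdash-blocks, etc., are again $2\times 2$ block\nobreakdash-Toeplitz.

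The key observation is a bit\nobreakdash-splitting identity. Write $i = a\,2^{m-1} + i'$ and $j = b\,2^{m-1} + j'$ with $a,b\in\{0,1\}$ and $0\le i',j'<2^{m-1}$; since the top bit occupies a position disjoint from the lower bits, digitwise subtraction gives $i\ominus j = (a\ominus b)\,2^{m-1} + (i'\ominus j')$. Partitioning $M_m(g)$ into four $2^{m-1}\times 2^{m-1}$ blocks according to $(a,b)$, the $(a,b)$ block is $\bigl(g\bigl((a\ominus b)2^{m-1} + (i'\ominus j')\bigr)\bigr)_{i',j'=0}^{2^{m-1}-1}$. When $a=b$ this equals $M_{m-1}(g_0)$ with $g_0(k):=g(k)$, and when $a\ne b$ it equals $M_{m-1}(g_1)$ with $g_1(k):=g(2^{m-1}+k)$. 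Hence the two diagonal blocks coincide (both $M_{m-1}(g_0)$) and the two off\nobreakdash-diagonal blocks coincide (both $M_{m-1}(g_1)$), so $M_m(g)=\begin{pmatrix} M_{m-1}(g_0) & M_{m-1}(g_1)\\ M_{m-1}(g_1) & M_{m-1}(g_0)\end{pmatrix}$, which is $2\times 2$ block\nobreakdash-Toeplitz (indeed block\nobreakdash-symmetric, using the symmetry \eqref{eqn:digital_net_symmetric_prop}).

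Finally I would close the argument by induction on $m$. The base case $m=1$ is the $2\times 2$ matrix $\begin{pmatrix} g(0) & g(1)\\ g(1) & g(0)\end{pmatrix}$, which is $2\times 2$ block\nobreakdash-Toeplitz with $1\times 1$ blocks (a $1\times 1$ matrix being vacuously block\nobreakdash-Toeplitz, so the recursion terminates cleanly). For the inductive step, both distinct blocks $M_{m-1}(g_0)$ and $M_{m-1}(g_1)$ are themselves of the form $M_{m-1}(\cdot)$, so the inductive hypothesis applies to each, showing that all their sub\nobreakdash-blocks down to the $1\times1$ level are $2\times 2$ block\nobreakdash-Toeplitz; together with the top\nobreakdash-level structure just exhibited, this gives the claim for $M_m(g)$, and hence for $\mCtheta$. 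I do not expect a real obstacle here: once Lemma~\ref{lemma:digital_net_prop} is invoked the statement is elementary. The only points requiring care are the bit\nobreakdash-splitting identity and the bookkeeping that the off\nobreakdash-diagonal blocks genuinely coincide and are again of ``XOR type'' (so that the recursion is legitimate).
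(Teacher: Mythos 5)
Your proposal is correct and follows essentially the same route as the paper: the paper's parameterized family $\mK^{(m,q)} = \bigl(K(\vz_{i\ominus j + q2^m})\bigr)_{i,j}$ is exactly your family $M_{m}(g)$ with the index-shifted functions $g(\cdot + q2^m)$, and both arguments reduce the Gram matrix via Lemma~\ref{lemma:digital_net_prop} and then induct on $m$ using the fact that the top index bit XORs independently of the lower bits, so each block is again a member of the same family.
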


%\Subsubsection
\begin{proof}

%\textbf{Proof of Theorem \ref{thrm:block-toeplitz}}: 
We prove this theorem by induction. Let $\mC_{\vtheta}^{(m)}$ denote the Gram matrix of size $2^m \times 2^m$.
The relation between sub-block matrices can be deciphered using the properties of digital nets.
To help with the proof of block-Toeplitz structure, consider the digital net properties \eqref{eqn:digital_shift_prop},  \eqref{eqn:digital_net_symmetric_prop},
%\begin{align*}
%\bigl(K(\vz_i \ominus \vz_j)\bigr)_{i,j=0}^{n-1}.
%\end{align*}
% Now we can prove the block-Toeplitz structure using these properties and facts \JRNote{add refs to properties}.
and notations,
\begin{align*}
\mK^{(m)} &:= 
\begin{pmatrix}
K({\vz_{i} \ominus \vz_{j}})
\end{pmatrix}_{i,j=0}^{2^{m}-1}
=
\begin{pmatrix}
K({\vz_{i \ominus j}})
\end{pmatrix}_{i,j=0}^{2^{m}-1}, \quad m=1,2,\cdots,
\\
\mK^{(m,q)} &:= 
\begin{pmatrix}
K({\vz_{i \ominus j + q 2^m}})
\end{pmatrix}_{i,j=0}^{2^{m}-1},
\quad 
 q = 0,1,\cdots .
\end{align*} 
These two notations are related by $\mK^{(m)} = \mK^{(m,0)}$. 
Please note that $\mC_{\vtheta}^{(m)} = \mK^{(m,0)}$.
% Since the digitally shift invariant kernel are used in this research. %, it is sufficient to prove $\mK^{(m,q)}$ as a $2\times 2$ block-Toeplitz. 
We will prove $\mK^{(m,q)}$ is a $2\times 2$ block-toeplitz matrix for all $m \in \naturals, q \in \naturals$.

% This notation is convenient for the proof as shown below.

\iffalse
As the first step, we verify the property holds for $m=0$,
\begin{align*}
\mK^{(0,q)} &= \begin{pmatrix}
K(\vz_{0 \ominus 0 + q 2^0})  
\end{pmatrix} = 
\begin{pmatrix}
K(\vz_{q})
\end{pmatrix}, \quad \text{by \eqref{eqn:digital_shift_prop}},
\end{align*} thus by definition $\mK^{(0,q)}$ is a block-Toeplitz.
\fi 

\iftrue
As the first step, we verify the property holds for $m=1$,
\begin{align*}
\mK^{(1, q)} &= \begin{pmatrix}
K(\vz_{0 \ominus 0 + q 2^1}) & K(\vz_{1 \ominus 0 + q 2^1})  \\
K(\vz_{0 \ominus 1 + q 2^1}) & K(\vz_{1 \ominus 1 + q 2^1})  
\end{pmatrix} = 
\begin{pmatrix}
K(\vz_{2q}) & K(\vz_{1 + 2q}) \\ K(\vz_{1 + 2q}) & K(\vz_{2q})
\end{pmatrix}, \quad \text{by \eqref{eqn:digital_shift_prop}}
\end{align*} has diagonal elements repeated. Thus by definition, it is a $2\times 2$ block-Toeplitz.
\fi 

\iffalse
Similarly, the property can be verified for $\mC^{(2)}$, 
\begin{align*}
\mC^{(2)} &= 
\begin{pmatrix}
K_{z_{(i \ominus j )}}
\end{pmatrix}_{i,j=0}^{2^2-1}
=
\begin{pmatrix}
K(\vz_{0 \ominus 0}) & K(\vz_{1 \ominus 0}) & K(\vz_{2 \ominus 0}) & K(\vz_{3 \ominus 0}) \\
K(\vz_{0 \ominus 1}) & K(\vz_{1 \ominus 1}) & K(\vz_{2 \ominus 1}) & K(\vz_{3 \ominus 1}) \\
K(\vz_{0 \ominus 2}) & K(\vz_{1 \ominus 2}) & K(\vz_{2 \ominus 2}) & K(\vz_{3 \ominus 2}) \\
K(\vz_{0 \ominus 3}) & K(\vz_{1 \ominus 3}) & K(\vz_{2 \ominus 3}) & K(\vz_{3 \ominus 3}) 
\end{pmatrix} \\
& = 
\begin{pmatrix}
\begin{pmatrix}
K(\vz_{0}) & K(\vz_{1}) \\
K(\vz_{1}) & K(\vz_{0})
\end{pmatrix} &
\begin{pmatrix}
K(\vz_{2}) & K(\vz_{3}) \\
K(\vz_{3}) & K(\vz_{2}) 
\end{pmatrix} \\
\begin{pmatrix}
K(\vz_{2}) & K(\vz_{3}) \\
K(\vz_{3}) & K(\vz_{2}) 
\end{pmatrix} &
\begin{pmatrix}
K(\vz_{0}) & K(\vz_{1}) \\
K(\vz_{1}) & K(\vz_{0})
\end{pmatrix} 
\end{pmatrix}, \quad \text{by \eqref{eqn:digital_shift_prop}, \eqref{eqn:digital_net_symmetric_prop}}
\\
& =
\begin{pmatrix}
\mC^{(1)} & \mC^{(1,1)} \\
\mC^{(1,1)} & \mC^{(1)}
\end{pmatrix},
\end{align*}
is a block-Toeplitz as it is obvious from the block structure depicted. 
\fi
% Here we used the notation $\mC^{(1,1)}$ to indicate the additional operation involved in the matrix.
% Where the matrix $\mC^{(1,1)}$ is obtained using the following shift operation.

Now assume that $\mK^{(m,q)}$ is block-Toeplitz.
% Please note if $\mK^{(m,0)}$ is block-Toeplitz then $\mK^{(m,q)}$ is also a block-Toeplitz for all $q \in \naturals_0$. 
% This is due to the fact that $i \oplus 2^m \ominus j = i \ominus j \ominus 2^m = i \ominus j \oplus 2^m $ for $i,j=0,\cdots,2^{m-1}$ since $i \ominus j < 2^{m-1}$.
% Assuming $\mK^{(m, q)}$ is a block-Toeplitz, 
We need to prove $\mK^{(m+1, q)}$ is also a $2\times 2$ block-Toeplitz. Let $n=2^m$,
\begin{align*}
\mK^{(m+1)} &= 
\begin{pmatrix}
K(\vz_{0    \ominus 0}) & \hdots & K(\vz_{0    \ominus n-1}) & K(\vz_{0    \ominus n}) & \hdots & K(\vz_{0    \ominus 2n-1}) \\
\vdots             & \vdots &             \vdots          &           \vdots      & \vdots &             \vdots         \\
K(\vz_{n-1  \ominus 0}) & \hdots & K(\vz_{n-1  \ominus n-1}) & K(\vz_{n-1  \ominus n}) & \hdots & K(\vz_{n-1  \ominus 2n-1}) \\
K(\vz_{n    \ominus 0}) & \hdots & K(\vz_{n    \ominus n-1}) & K(\vz_{n    \ominus n}) & \hdots & K(\vz_{n    \ominus 2n-1}) \\
\vdots      & \vdots &             \vdots        &             \vdots      & \vdots &             \vdots         \\
K(\vz_{2n-1 \ominus 0}) & \hdots & K(\vz_{2n-1 \ominus n-1}) & K(\vz_{2n-1 \ominus n}) & \hdots & K(\vz_{2n-1 \ominus 2n-1}) 
\end{pmatrix} 
\\
& = 
\begin{pmatrix}
\begin{pmatrix}
K(\vz_{  0   }) & \hdots & K(\vz_{ n-1}) \\
\vdots          & \vdots &    \vdots     \\
K(\vz_{ n-1  }) & \hdots & K(\vz_{ 0 })
\end{pmatrix}
& 
\begin{pmatrix}
K(\vz_{ n})     & \hdots & K(\vz_{ 2n-1}) \\
\vdots          & \vdots &     \vdots     \\
K(\vz_{ 2n-1 }) & \hdots & K(\vz_{ n })   
\end{pmatrix}
\\
\begin{pmatrix}
K(\vz_{ n})     & \hdots & K(\vz_{ 2n-1}) \\
\vdots          & \vdots &     \vdots     \\
K(\vz_{ 2n-1 }) & \hdots & K(\vz_{ n })   
\end{pmatrix}
&
\begin{pmatrix}
K(\vz_{  0   }) & \hdots & K(\vz_{ n-1}) \\
\vdots          & \vdots &    \vdots     \\
K(\vz_{ n-1  }) & \hdots & K(\vz_{ 0 })
\end{pmatrix}
\end{pmatrix} 
\\
& =
\begin{pmatrix}
\mK^{(m)} & \mK^{(m,1)} \\ \mK^{(m,1)} & \mK^{(m)}
\end{pmatrix}
\end{align*}
is a $2\times 2$ block-Toeplitz, where we used the properties  \eqref{eqn:digital_shift_prop}, \eqref{eqn:digital_net_symmetric_prop} and facts $2n-1 \ominus n = n-1$, $2n-1 \ominus n-1 = n$, and $n \ominus n-1 = 2n-1$. 
Thus $\mK^{(m+1)}$ is a $2\times2$ block-Toeplitz. 
Similarly 
\begin{align*}
\mK^{(m+1,q)} 
& =
\begin{pmatrix}
\mK^{(m,q)} & \mK^{(m,q+1)} \\ \mK^{(m,q+1)} & \mK^{(m,q)}
\end{pmatrix}
\end{align*}
 is a $2\times2$ block-Toeplitz. 
%\end{itemize}
Thus $\mC_{\vtheta}^{(m)}$ of size $2^m\times 2^m$, for $m \in \naturals$, is a $2\times2$ block-Toeplitz and every block and it's sub-blocks of size $2^p, \; p \in \naturals, \; p \le m$ are also $2\times2$ block-Toeplitz.
\end{proof}

\begin{theorem}
\label{thrm:hadamard_eigenvector}
The Walsh-Hadamard matrix $\mH^{(m)}$ factorizes $\mC_{\vtheta}^{(m)}$, so that the columns of Walsh-Hadamard matrix are the eigenvectors of $\mC_{\vtheta}^{(m)}$, i.e.,
\begin{align*}
\mH^{(m)} \mC_{\vtheta}^{(m)}  = \mLambda^{(m)} \mH^{(m)}, \quad m \in \naturals. 
\end{align*}
\end{theorem}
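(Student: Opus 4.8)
The plan is to prove this by induction on $m$, exploiting the recursive $2\times 2$ block-Toeplitz structure of $\mCtheta^{(m)}$ established in Theorem~\ref{thrm:block-toeplitz} together with the recursive definition \eqref{eqn:hadamard_matrix} of the Walsh--Hadamard matrix. To make the induction close it is convenient to prove the slightly stronger statement that \emph{for every offset} $q \in \natzero$ the auxiliary matrix $\mK^{(m,q)} = \bigl(K(\vz_{i\ominus j + q2^m})\bigr)_{i,j=0}^{2^m-1}$ from the proof of Theorem~\ref{thrm:block-toeplitz} is diagonalized by $\mH^{(m)}$, i.e. $\mH^{(m)}\mK^{(m,q)} = \mLambda^{(m,q)}\mH^{(m)}$ for some real diagonal matrix $\mLambda^{(m,q)}$; the theorem is the case $q=0$, since $\mCtheta^{(m)} = \mK^{(m,0)}$.

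The base case $m=1$ is the $2\times 2$ computation already carried out in the proof of Theorem~\ref{thrm:block-toeplitz}: $\mK^{(1,q)}$ is the symmetric circulant matrix with first row $\bigl(K(\vz_{2q}),\,K(\vz_{2q+1})\bigr)$, so $\mH^{(1)} = \left(\begin{smallmatrix}1&1\\1&-1\end{smallmatrix}\right)$ diagonalizes it and $\mLambda^{(1,q)} = \diag\bigl(K(\vz_{2q})+K(\vz_{2q+1}),\,K(\vz_{2q})-K(\vz_{2q+1})\bigr)$.

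For the inductive step I would use the two block identities from the proof of Theorem~\ref{thrm:block-toeplitz}, namely $\mK^{(m+1,q)} = \left(\begin{smallmatrix}\mK^{(m,q)} & \mK^{(m,q+1)}\\ \mK^{(m,q+1)} & \mK^{(m,q)}\end{smallmatrix}\right)$ and $\mH^{(m+1)} = \left(\begin{smallmatrix}\mH^{(m)} & \mH^{(m)}\\ \mH^{(m)} & -\mH^{(m)}\end{smallmatrix}\right)$. Multiplying these out in block form and inserting the inductive hypotheses $\mH^{(m)}\mK^{(m,q)} = \mLambda^{(m,q)}\mH^{(m)}$ and $\mH^{(m)}\mK^{(m,q+1)} = \mLambda^{(m,q+1)}\mH^{(m)}$, the two identical off-diagonal blocks make the products collapse, and, writing $\mLambda_{\pm} := \mLambda^{(m,q)}\pm\mLambda^{(m,q+1)}$,
\[
\mH^{(m+1)}\mK^{(m+1,q)}
= \begin{pmatrix} \mLambda_+\mH^{(m)} & \mLambda_+\mH^{(m)} \\ \mLambda_-\mH^{(m)} & -\mLambda_-\mH^{(m)} \end{pmatrix}
= \begin{pmatrix} \mLambda_+ & 0 \\ 0 & \mLambda_- \end{pmatrix}\mH^{(m+1)} .
\]
Since $\mLambda_{\pm}$ are diagonal, $\mLambda^{(m+1,q)} := \diag(\mLambda_+,\mLambda_-)$ is diagonal, which closes the induction; taking $q=0$ gives the theorem, and as a byproduct one reads off the eigenvalue recursion $\mLambda^{(m+1)} = \diag\bigl(\mLambda^{(m,0)}+\mLambda^{(m,1)},\,\mLambda^{(m,0)}-\mLambda^{(m,1)}\bigr)$, consistent with \eqref{eqn:fast_transform_to_eigvalues}. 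Because $\mH^{(m)}$ is symmetric and $(\mH^{(m)})^2 = 2^m\mathsf{I}$, this is exactly the factorization \eqref{eqn:ftk_factor} with $\mV = \mH^{(m)}$, so the columns of $\mH^{(m)}$ are genuine eigenvectors of $\mCtheta^{(m)}$.

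The main obstacle I anticipate is bookkeeping rather than a conceptual gap: one has to carry the offset $q$ through the induction (the $q=0$ step already invokes $\mK^{(m,1)}$), and to justify the block identity for $\mK^{(m+1,q)}$ at general $q$ one must check the index arithmetic for $i\ominus j$ as $i,j$ range over the four quadrants of $\{0,\dots,2^{m+1}-1\}$, using the digital-net identities \eqref{eqn:digital_shift_prop}, \eqref{eqn:digital_net_symmetric_prop} and the fact that adding $q2^m$ in base $2$ to an index below $2^m$ introduces no carries. Checking that each $\mLambda^{(m,q)}$ stays diagonal and real then follows for free, the reality coming from symmetry of $\mK^{(m,q)}$ (which holds because $i\ominus j = j\ominus i$ in base $2$).
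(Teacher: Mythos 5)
Your proposal is correct and follows essentially the same route as the paper: both prove the strengthened statement that $\mH^{(m)}$ diagonalizes $\mK^{(m,q)}$ for every offset $q$ by induction on $m$, with the $2\times 2$ case as base, the block identities $\mK^{(m+1,q)} = \bigl(\begin{smallmatrix}\mK^{(m,q)} & \mK^{(m,q+1)}\\ \mK^{(m,q+1)} & \mK^{(m,q)}\end{smallmatrix}\bigr)$ and $\mH^{(m+1)} = \bigl(\begin{smallmatrix}\mH^{(m)} & \mH^{(m)}\\ \mH^{(m)} & -\mH^{(m)}\end{smallmatrix}\bigr)$ driving the inductive step, and the same eigenvalue recursion $\mLambda^{(m+1,q)} = \diag\bigl(\mLambda^{(m,q)}+\mLambda^{(m,q+1)},\,\mLambda^{(m,q)}-\mLambda^{(m,q+1)}\bigr)$. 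The only difference is cosmetic: you make explicit the index bookkeeping for general $q$ and the link back to \eqref{eqn:ftk_factor}, which the paper leaves implicit via Theorem~\ref{thrm:block-toeplitz}.
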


\begin{proof}
	
Again, we use the proof-by-induction technique to show that the Walsh-Hadamard matrix factorizes $\mK^{(m,q)}$.
\iffalse
We can easily see the Hadamard matrix $\mH^{(0)}$ diagonalizes $\mC^{(0)}$,
\begin{align*}
\mH^{(0)} \mC^{(0)} &= 
\begin{pmatrix}
1 
\end{pmatrix}
\begin{pmatrix}
K(\vz_{0}) 
\end{pmatrix}
\\
% & = \begin{pmatrix} K(\vz_{0})+K(\vz_{1}) & K(\vz_{0})+K(\vz_{1}) \\ K(\vz_{0})-K(\vz_{1}) & K(\vz_{1})-K(\vz_{0}) \end{pmatrix} \\ 
& = 
\begin{pmatrix}
K(\vz_{0})
\end{pmatrix}
\begin{pmatrix}
1 
\end{pmatrix}
\\ 
% & = \begin{pmatrix} K(\vz_{0})+K(\vz_{1}) & 0 \\ 0 & K(\vz_{0})-K(\vz_{1}) \end{pmatrix} \mH^{(1)} \\
&= \mLambda^{(0)} \mH^{(0)},
\end{align*}
where $\mLambda^{(0)}$ is a diagonal matrix, thus $\mH^{(0)}$ diagonalizes $\mC^{(0)}$.
\fi
We can easily see the Hadamard matrix $\mH^{(1)}$ diagonalizes $\mK^{(1,q)}$,
\begin{align*}
\mH^{(1)} \mK^{(1,q)} &= 
\begin{pmatrix}
1 &  1 \\ 1 & -1
\end{pmatrix}
\begin{pmatrix}
K(\vz_{0 + q2^1}) & K(\vz_{1 + q2^1}) \\ K(\vz_{1 + q2^1}) & K(\vz_{0 + q2^1})
\end{pmatrix}, \quad \text{by Theorem \ref{thrm:block-toeplitz}} 
\\
& = \begin{pmatrix} K(\vz_{2q})+K(\vz_{2q+1}) & K(\vz_{2q})+K(\vz_{2q+1}) \\ K(\vz_{2q})-K(\vz_{2q+1}) & K(\vz_{2q+1})-K(\vz_{2q}) \end{pmatrix} \\ 
& = \begin{pmatrix} K(\vz_{2q})+K(\vz_{2q+1}) & 0 \\ 0 & K(\vz_{2q})-K(\vz_{2q+1}) \end{pmatrix} 
\begin{pmatrix}
1 &  1 \\ 1 & -1
\end{pmatrix} \\
&= \mLambda^{(1,q)} \mH^{(1)},
\end{align*}
where $\mLambda^{(1,q)}$ is a diagonal matrix, thus $\mH^{(1)}$ factorizes $\mK^{(1,q)}$.

Now assume $\mH^{(m)}$ factorizes $\mK^{(m,q)}$, so $\mH^{(m)} \mK^{(m,q)} = \mLambda^{(m,q)} \mH^{(m)}$ where $\mLambda^{(m,q)}$ is diagonal. We need to prove $\mH^{(m+1)}$ factorizes $\mK^{(m+1,q)}$,
\begin{align*}
\mH^{(m+1)} \mK^{(m+1,q)} &= 
\begin{pmatrix}
\mH^{(m)} & \mH^{(m)} \\ \mH^{(m)} & -\mH^{(m)}
\end{pmatrix}
\begin{pmatrix}
\mK^{(m,q)} & \mK^{(m,q+1)} \\ \mK^{(m,q+1)} & \mK^{(m,q)}
\end{pmatrix}, \quad \text{by Theorem \ref{thrm:block-toeplitz}}
\\
& = \begin{pmatrix}
\mH^{(m)} (\mK^{(m,q)}   + \mK^{(m,q+1)}) & 
\mH^{(m)} (\mK^{(m,q)}   + \mK^{(m,q+1)}) \\ 
\mH^{(m)} (\mK^{(m,q)}   - \mK^{(m,q+1)}) & 
\mH^{(m)} (\mK^{(m,q+1)} - \mK^{(m,q)}) 
\end{pmatrix} \\ 
& = \begin{pmatrix}
(\mLambda^{(m,q)}  + \mLambda^{(m,q+1)})  \mH^{(m)} & 
(\mLambda^{(m,q)}   + \mLambda^{(m,q+1)}) \mH^{(m)} \\ 
(\mLambda^{(m,q)}   - \mLambda^{(m,q+1)}) \mH^{(m)} & 
(\mLambda^{(m,q+1)} - \mLambda^{(m,q)})   \mH^{(m)}
\end{pmatrix} \\ 
& = 
\begin{pmatrix}
\mLambda^{(m,q)} + \mLambda^{(m,q+1)} & 0 \\ 0 & \mLambda^{(m,q)} - \mLambda^{(m,q+1)}
\end{pmatrix}
\begin{pmatrix}
\mH^{(m)} & \mH^{(m)} \\ \mH^{(m)} & -\mH^{(m)}
\end{pmatrix}
\\ 
% & = \begin{pmatrix} \mC^{(m)} + \mC^{(m,m)} & 0 \\ 0 & \mC^{(m)} - \mC^{(m,m)} \end{pmatrix} \mH^{(m+1)} \\
& = \mLambda^{(m+1,q)} \mH^{(m+1)} .
\end{align*}
Thus, $\mH^{(m+1)}$ factorizes $\mK^{(m+1,q)}$ to a diagonal matrix $\mLambda^{(m+1,q)}$. This implies $\mH^{(p)}$ factorizes $\mC_\vtheta^{(p)}$ for $p \in \naturals$. Please recall $\mC_\vtheta^{(p)} = \mK^{(p,0)}$.  Here we used the fact that both $\mH$ and $\mK$ are symmetric positive definite. 
\end{proof}

\iffalse
Next, we show the eigenvectors of a block-Toeplitz matrix can be obtained by Kronecker product of its sub-block matrices. If $\mA, \mB$ share same eigenvalues $\mA \mV = \mLambda \mV, \mB \mV = \mV \mSigma$ then
	\begin{align*}
	\begin{pmatrix}
	\mA & \mB \\ \mB & \mA
	\end{pmatrix}
	\begin{pmatrix}
	\mV & \mV \\ \mV & -\mV
	\end{pmatrix}
	 = 
	\begin{pmatrix}
	\mV & \mV \\ \mV & -\mV
	\end{pmatrix}
	\begin{pmatrix}
	\mLambda + \mSigma & 0 \\ 0 & \mLambda - \mSigma 
	\end{pmatrix}
	\end{align*}
\fi

\Subsection{Fast Bayesian transform}
We can easily show that the Walsh-Hadamard matrices satisfy the assumptions of fast Bayesian transform \eqref{fastcompAssump}. As shown in Section~\ref{sec:eigenvector_hadamard} the columns of $\mH^{({m})}$ are the eigenvectors. Since the Gram matrix $\mC$ is symmetric, the columns/rows of Walsh-Hadamard matrices are mutually orthogonal. Thus the Gram matrix can be written as 
\begin{align}
\label{eqn:hadamard_fwht}
\mC^{(m)} = \frac{1}{n} \mH^{(m)} \mLambda^{(m)} \mH^{(m)}, \quad \text{where} \quad \mH^{({m})} = \underbrace{ \mH^{(1)} \bigotimes \cdots \bigotimes \mH^{(1)} }_{m \; \text{times}}.
\end{align}
% \JRNote{prove eigenvalues for Sobol case}
Assumption \eqref{fastcompAssumpB} follows automatically by the fact that Walsh-Hadamard matrices can be constructed analytically. Assumption \eqref{fastcompAssumpA} can also be verified as the first row/column are one vectors. Finally, assumption \eqref{fastcompAssumpC} is satisfied due to the fact that fast Walsh transform can be computed in $\Order({n \log n})$ operations using fast Walsh-Hadamard transform.
Thus the Walsh-Hadamard transform is a fast Bayesian transform, $\mV := \mH$, as per \eqref{fastcompAssump}.

We have implemented a fast adaptive Bayesian cubature algorithm using the kernel \eqref{eqn:walsh_kernel} with $r=1$ and Sobol' points \cite{BraFox88} in MATLAB as part of the Guaranteed Adaptive Integration Library (GAIL) \cite{ChoEtal17b} as \allowbreak \code{cubBayesNet\_g}. The Sobol' points used in this algorithm are generated using MATLAB's builtin function \code{sobolset} and scrambled using MATLAB function \code{scramble} \cite{HonHic00a}. The fast Walsh-Hadamard transform \eqref{eqn:hadamard_fwht} is computed using MATLAB's builtin function \code{fwht} with \emph{hadamard} ordering. 

\Subsection{Iterative Computation of Walsh Transform}
In every iteration of our algorithm, we double the number of function values. Using the technique described here, we have to only compute the Walsh transform for the newly added function values.
Similar to the lattice points, Sobol' points are extensible by definition. This property is used in our algorithm to improve the integration accuracy till the required error tolerance is met. Sobol' nodes can be combined with Hadamard matrices as demonstrated here for iterative computation. 
% Using the same notations as in \secref{sec:iter_fft}, l
Let $\widetilde{\vy} = \mH^{(m+1)} {\vy}$ for some arbitrary $\vy \in \reals^{2n}$, $n = 2^m$. Define, 
\begin{gather*}
\vy = \begin{pmatrix}[1.1] y_1 \\ \vdots \\ y_{2n} \end{pmatrix}, \quad 
\vy^{(1)} = \begin{pmatrix}[1.1] y_1 \\ \vdots \\ y_{n} \end{pmatrix}, \quad 
\vy^{(2)}  = \begin{pmatrix}[1.1] y_{n+1} \\ \vdots \\ y_{2n} \end{pmatrix}, \\ 
\widetilde{\vy}^{(1)} = \mH^{(m)} \vy^{(1)} = 
\begin{pmatrix}[1.0] \widetilde{y}^{(1)}_1 \\ \widetilde{y}^{(1)}_2 \\ \vdots \\ \widetilde{y}^{(1)}_{n} \end{pmatrix}, \quad 
\widetilde{\vy}^{(2)}  =  \mH^{(m)} \vy^{(2)} =
\begin{pmatrix}[1.0] \widetilde{y}^{(2)}_{1} \\  \widetilde{y}^{(2)}_{2} \\ \vdots \\ \widetilde{y}^{(2)}_{n} \end{pmatrix}. 
\end{gather*}
Then,
\begin{align*}
\widetilde{\vy} &= \mH^{({m+1})} {\vy} \\
& = \begin{pmatrix}
\mH^{(m)} & \mH^{(m)} \\ \mH^{(m)} & - \mH^{(m)}
\end{pmatrix} 
\begin{pmatrix}
\vy^{(1)} \\ \vy^{(2)}
\end{pmatrix}, \qquad \text{by \eqref{eqn:hadamard_matrix}} \\
&= 
\begin{pmatrix}
\mH^{(m)} \vy^{(1)} + \mH^{(m)} \vy^{(2)} \\ 
\mH^{(m)} \vy^{(1)} - \mH^{(m)} \vy^{(2)}
\end{pmatrix}\\
&= 
\begin{pmatrix}
\widetilde{\vy}^{(1)} + \widetilde {\vy}^{(2)} \\ 
\widetilde {\vy}^{(1)} - \widetilde {\vy}^{(2)}
\end{pmatrix} =: \widetilde{\vy} \quad.
\end{align*}
As before with the lattice nodes, the computational cost to compute $\mV^{(m+1)H} \vy$ is 
twice the cost of computing $\mV^{(m)H} \vy^{(1)}$ plus $2n$ additions, where $n=2^m$. An inductive argument shows that for any $m \in \naturals$, $\mV^{(m)H}\vy$ requires only $\Order(n \log n)$ operations. Usually the multiplications in $\mV^{(m)H} \vy^{(1)}$ are multiplications by $-1$ which are simply accomplished using sign change or negation, requiring no multiplications at all.

\Section{Higher Order Nets}

Higher order digital nets are an extension of $(t,m,d)$-nets, introduced in \cite{Dic08a}. They can be used to numerically integrate smoother functions which are not necessarily periodic, but have square integrable mixed partial derivatives of order $\alpha$, at a rate of $\Order(n^{-\alpha})$ multiplied by a power of a $\log n$ factor using rules corresponding to the modified $(t,m, d)$-nets.
We want to emphasize that quasi-Monte Carlo rules based on these point sets can achieve convergence rates faster than $\Order(n^{-1})$. 
Higher order digital nets are constructed using matrix-vector multiplications over finite
fields. 

One could develop matching digitally shift invariant kernels to formulate the fast Bayesian cubature. Bayesian cubatures using higher order digital nets are a topic for future research.

\clearpage

\Chapter{Numerical Implementation} 
\label{sec:NumImpl}

\Section{Overcoming Cancellation Error}
\label{sec:overcome_cancel_error}

We now refer back to general setting for the fast automatic Bayesian cubature in Section~\ref{sec:fast_BC}.
For the covariance kernels used in our computation, it often happens that $n/\lambda_1$ is close to $1$, especially for larger $n$.  Thus, the term $1-n/\lambda_1$, which appears in the credible interval widths, $\err_{\MLE}$, $\err_{\textup{full}}$, and $\err_{\textup{GCV}}$  \eqref{errSimple}, may suffer from cancellation error.  We can avoid this cancellation error by modifying how we compute the Gram matrix and its eigenvalues.

Any shift-invariant or digital shift-invariant covariance kernel satisfying \eqref{addAssump} can be written as $C_\vtheta = 1 + \rC_\vtheta$, where $\rC_\vtheta$ is also symmetric and positive definite.
The associated Gram matrix for $\rC_\vtheta$ is then $\rmC_\vtheta = \mC_\vtheta - \vone \vone^T$, and the eigenvalues of $\rmC_\vtheta$ are $\rlambda_1 = \lambda_1 - n, \lambda_2, \ldots, \lambda_n$, which follows because $\vone$ is the first eigenvector of both $\mC_\vtheta$ and $\rmC_\vtheta$. Note that $\rC_\vtheta$ inherits the shift-invariant properties of $C_\vtheta$. Then,
\begin{equation*}
1 - \frac{n}{\lambda_1}  = \frac{\lambda_1 - n}{\lambda_1} = \frac{\rlambda_1}{\rlambda_1 +n},
\end{equation*}
where now the right hand side is free of cancellation error.

We show how to compute $\rC_\vtheta$ without introducing round-off error.  The covariance functions that we use in both Chapter~\ref{sec:shift_invariant_kernel} and \ref{sec:sobol_walsh} are of product form, namely,
\begin{equation*}
C_\vtheta(\vt, \vx) = \prod_{\ell=1}^d \left[1 + \rC_{_\vtheta,\ell}(t_\ell,x_\ell) \right], \qquad  \rC_{_\vtheta,\ell}:[0,1] \times [0,1] \to \reals.
\end{equation*}
Direct computation of $\rC_\vtheta (\vt,\vx) = C_\vtheta(\vt,\vx) -1$ introduces cancellation error if the $ \rC_\ell$ are small.  So, we employ the iteration,
\begin{align*}
\rC_\vtheta^{(1)}(\vt,\vx) &= \rC_{_\vtheta,1}(t_1,x_1),  \\
\rC_\vtheta^{(\ell)}(\vt,\vx) &  = \rC_\vtheta^{(\ell-1)}[1 + \rC_{\vtheta,\ell}(t_\ell,x_\ell)] + \rC_{\vtheta,\ell}(t_\ell,x_\ell),  \hspace{1cm} \ell = 2, \ldots, d, \\
\rC_\vtheta(\vt,\vx)  & = \rC_\vtheta^{(d)}(\vt,\vx).
\end{align*}
In this way, the Gram matrix $\rmC_\vtheta$, whose $i,j$-element is $\rC_\vtheta(\vx_i,\vx_j)$ can be constructed with minimal round-off error because we avoid subtraction.

Computing the eigenvalues of $\rmC_\vtheta$ via the procedure given in \eqref{eqn:fast_transform_to_eigvalues} yields $\rlambda_1 = \lambda_1 - n, \lambda_2, \ldots, \lambda_n$. The estimates of $\vtheta$ are computed in terms of the eigenvalues of $\rmC_\vtheta$.  So \eqref{eqn_MLE_loss_func_optimized_2} and \eqref{thetaGCV} become
\begin{subequations}
	\label{thetaSimple}
	\begin{align}
	\vtheta_\MLE
	&= 
	\argmin_{\vtheta}
	\left[
	\log\left(
	\sum_{i=2}^n \frac{\abs{\widetilde{y}_i}^2}{\lambda_i}
	\right) 
	%\label{thetaMLEsimple}
	+ 
	\frac{1}{n}\sum_{i=1}^n \log(\lambda_i)
	\right], \\
	\vtheta_{\GCV} 
	&= \argmin_\vtheta \left[ \log \left ( \sum_{i=2}^n \frac{\abs{\widetilde{y}_i}^2}{\lambda_i^2} 
	\right)  -2\log\left( \sum_{i=1}^n \frac{1}{\lambda_i} \right)
	\right],
	\end{align}
\end{subequations}
where $\lambda_1 = n + \rlambda_1$.  The widths of the credible intervals in \eqref{eq:errMLEAllAsump}, \eqref{FJH:eq:errFullSimple}, and   \eqref{errGCVSimple} become,
\begin{subequations}
	\label{fastStoppingCriterions}
	\begin{align}
	\label{fastStoppingCriterionMLE}
	\err_\MLE  &
	=
	\frac{2.58}{n}\sqrt{
		\frac{\rlambda_1}{\lambda_1}
		\sum_{i=2}^{n} \frac{\abs{\widetilde{y}_i}^2}{\lambda_i}  
	}, 
	\\
	\label{fastStoppingCriterionFull}
	\err_{\textup{full}} 
	& = \frac{t_{n-1,0.995}}{n} \sqrt{
		\frac{\rlambda_1}{n-1} \sum_{i=2}^n \frac{\abs{\widetilde{y}_i}^2}{\lambda_i}
	}, \\
	\label{fastStoppingCriterionGCV}
	\err_{\textup{GCV}} & =
	\frac{2.58}{n}\sqrt{	\frac{\rlambda_1}{\lambda_1} \sum_{i=2}^n \frac{\abs{\widetilde{y}_i}^2}{\lambda_i^2}  \left [ \frac 1n \sum_{i=1}^n \frac{1}{\lambda_i} \right]^{-1}} .
	\end{align}
\end{subequations}
Since $\rlambda_1 = \lambda_1 - n$ and $\lambda_1 \sim n$ it follows $\rlambda_1/\lambda_1 \approx \rlambda_1/(n-1)$ and is small for  large $n$.  Moreover, for large $n$, the credible intervals via empirical Bayes and full Bayes are similar, since $t_{n-1,0.995}$ is approximately $2.58$.

\iffalse 
Whereas to compare $\err_\GCV$, need little more insight due to the term $\lambda_i^2$ in the denominator. Rewriting it:
\begin{align*}
\err_\GCV^2 & =
\frac{2.58^2}{n^2}	\frac{\rlambda_1}{n + \rlambda_1} \frac{ \sum_{i=2}^n \left( \frac{\abs{\widetilde{y}_i}^2}{\lambda_i} \frac{1}{\lambda_i} \right)}{ \frac 1n \sum_{i=1}^n \frac{1}{\lambda_i} } 
\end{align*}
If the eigenvalues are ordered with $n$ then,
\[
\frac{1}{\lambda_n} \geq \frac{1}{\lambda_{n-1}} ... \geq \frac{1}{\lambda_1} 
\]
Note that we assumed the kernel Gram matrix is positive definite, so all the eigen values are positive
\begin{align*}
\frac{ \sum_{i=2}^n \left( \frac{\abs{\widetilde{y}_i}^2}{\lambda_i} \frac{1}{\lambda_i} \right)}{ \frac 1n \sum_{i=1}^n \frac{1}{\lambda_i} } 
\leq 
\frac{ \frac{1}{\lambda_n} \sum_{i=2}^n \frac{\abs{\widetilde{y}_i}^2}{\lambda_i}  }{ \frac 1n n \frac{1}{\lambda_1} } 
\leq 
\frac{\lambda_1}{\lambda_n} \sum_{i=2}^n  \frac{\abs{\widetilde{y}_i}^2}{\lambda_i}  
\end{align*}
Thus we can write
\begin{align*}
\err^2_\GCV \leq \frac{\lambda_1}{\lambda_n} \err^2_\MLE
\end{align*}
This shows the $\err_\GCV$ is more aggressive stopping criterion than $\err_\MLE$. If the chosen kernel is very smooth then eigen values decay very quickly with increasing $n$. 
\fi

\begin{algorithm}
	\caption{Fast Automatic Bayesian Cubature}\label{algorithm2}
	\begin{algorithmic}[1]
		\Require 
		A choice of generator for the point set $\vx_1, \vx_2, \ldots$ and a matching kernel $C_\vtheta$ from, 
		1) rank-1 Lattice points and a matching shift-invariant  kernel, 2) Sobol' sequence and a matching digital shift-invariant kernel;
		a black-box function, $f$; 
		an absolute error tolerance,
		$\varepsilon>0$; the positive initial sample size, $n_0$, that is a power of $2$;
		the maximum sample size $n_{\textup{max}}$
		
		\State $n \gets n_0, \; n' \gets 0, \; \err_{\CI} \gets \infty$
		
		\While{$\err_{\CI} > \varepsilon$ and $n \le n_{\textup{max}}$}
		
		\State\label{LoopStartA2}Generate $\{ \vx_i\}_{i=n' + 1}^{n}$ and sample $\{f(\vx_i)\}_{i=n'+1}^{n}$
		\State Compute $\vtheta$ by \eqref{eqn_MLE_loss_func_optimized_2} or \eqref{thetaGCV} by using the techniques from Chapter~\ref{sec:shift_invariant_kernel} or \ref{sec:sobol_walsh}
		\State Compute $\err_{\CI}$  according to \eqref{fastStoppingCriterionMLE}, \eqref{fastStoppingCriterionFull}, or \eqref{fastStoppingCriterionGCV}  by using the techniques from Chapter~\ref{sec:shift_invariant_kernel} or \ref{sec:sobol_walsh}
		
		\State	$n' \gets n, \; n \gets 2n'$
		
		\EndWhile
		
		\State Update sample size to compute $\hmu$, $n \gets n'$
		\State Compute $\hmu$, the approximate integral,   according to \eqref{muhatGCV-FB-MLE-Simple}
		\State \Return $\hmu, \; n$  and $\err_{\CI}$
	\end{algorithmic}
\end{algorithm}

The computational steps for the improved, faster, automatic Bayesian cubature are detailed in Algorithm \ref{algorithm2}.
In comparison to Algorithm \ref{algorithm1}, the second and third components of the computational cost of Algorithm  \ref{algorithm2} are substantially reduced.
The Algorithm \ref{algorithm2} has a computational cost which is the sum of the following:
% so that the computational cost is the sum of the following:
\begin{itemize}
	\item $\Order\bigl(n\$(f) \bigr)$ for the integrand data, where $\$(f)$ is the computational cost of a single $f(\vx)$
	
	\item $\Order\bigl(N_{\opt} n \$(C_\vtheta) \bigr)$ for the evaluations of the vector $\vC_{1}$, where $N_\opt$ is the number of optimization steps required, and  $\$(C_\vtheta)$ is the computational cost of a single $C_\vtheta(\vt,\vx)$
	
	\item $\Order\bigl(N_{\opt} n \log(n) \bigr)$ for the FFT calculations; there is no $d$ dependence in these calculations
\end{itemize}

\Section{Kernel Hyperparameters Search}
\label{sec:kernel_param_search}

\JRNote{Explain the transformation used to make the search range positive, $> 0$ , etc.}

The various hyperparameters introduced and used by our algorithms need to be optimally chosen. % We have not discussed how that will be done. 
The parameter search can be done in two major ways. 
Bounded minima search, if the search interval is known, else unbounded search.  Most of the scenarios, the search interval is unknown. So the natural choice is to use unbounded search over the unbound domain such as \code{fminsearch} provided by MATLAB.
% line in -- that is semi-bounded.
However hyperparameters need to live in a domain that is bounded or semi-bounded. 
There are some simple domain transformations available to achieve this.

\Subsection{Positive kernel shape parameter}
The following parameter map is used to ensure that the shape parameter values are positive real numbers. For $\eta > 0$ as introduced in Section \ref{sec:deriv_of_kernel}, let
% We need to have $\eta > 0$ and $ 0 < b < 1$ while searching for the optimum value, so we use the following transformations
\begin{align*}
\eta{(t_1)} = & \quad e^{t_1}, & \eta : (-\infty, \infty) \to (0, \infty).
\end{align*}
Instead of searching for $\eta \in (0, \infty)$, we may search for the optimal $t_1 = \log(\eta)$ over the whole real line $\reals$. 
% After the successful completion the minima 
The optimal value $t_{1, \textup{opt}}$ can be transformed back to the $(0, \infty)$ interval using 
% the inverse of the transform $\log(t_0 )$ where $t_0$ is the minima.
\begin{align*}
\eta_{\textup{opt}} = e^{t_{1,\textup{opt}}}.
\end{align*}

\Subsection{Kernel order $1 < r < \infty$}
The following map is used to ensure that the kernel order values are positive real number and greater than one, i.e., in the $(1, \infty)$ interval as required in \secref{sec:trunc_series_kernel},
\begin{align*}
r(t_2) = & \quad {1 + e^{t_2}}, & r:  (-\infty, \infty) \to (1,\infty).
\end{align*}
So one may search for the optimal $t_2 = -\log(r-1)$ in the whole real line $\reals$.
The optimal value $t_{2, \textup{opt}}$ can be transformed back to the desired interval $(0,1)$ using 
\begin{align*}
r_{\textup{opt}} = 1+ e^{t_{2, \textup{opt}}}.
\end{align*}

\Subsection{Kernel order $0 < q < 1$}
The following multivariate map is used to ensure that the kernel order values are positive real and less than one, i.e., in the $(0,1)$ interval to use with exponentially decaying kernel, as introduced in \secref{sec:exp_decay_kernel},
\begin{align*}
q(t_3) = & \quad \frac{1}{1 + e^{t_3}}, & q: (-\infty, \infty) \to (0, 1).
\end{align*}
So one may search for the optimal $t_3 = \log(q^{-1}-1)$ in the whole real line $\reals$.
The optimal value $t_{3, \textup{opt}}$ can be transformed back to the desired interval $(0,1)$ by using
\begin{align*}
q_{\textup{opt}} = \frac{1}{1 + e^{t_{3, \textup{opt}}}}.
\end{align*}

\Subsection{Combined searching of kernel order $r$ and shape parameter $\veta$}
Instead of searching $\veta$ and $r$ separately one would prefer to search them together so that the most optimal values can be obtained, where $\veta = (\eta_1, \cdots, \eta_d)$ such that $\eta_l \ne \eta_k$,  for $l \ne k$.
We can combine the parameter maps used above to ensure that the kernel order values in $(1, \infty)$ and shape parameter $\veta$ in $(0,\infty)^d$ as required in \secref{sec:trunc_series_kernel},
\begin{align*}
\vtheta(\vt) = 
\begin{pmatrix}[1]
r(t_{1}) \\ \eta(t_2) \\ \vdots \\ \eta(t_{d+1}) 
\end{pmatrix} =
\begin{pmatrix}[1]
1 + e^{t_1} \\ e^{t_2} \\ \vdots \\ e^{t_{d+1}}
\end{pmatrix}, 
\quad
\vtheta: \reals^{d+1} \to (1,\infty) \times (0,\infty)^d .
\end{align*}
So instead of searching for $\vtheta_{\textup{opt}}$ in $(1,\infty) \times (0,\infty)^d$, one may search for the optimal 
$$
\vt = \vt^{-1} (\vtheta) = 
\begin{pmatrix}[1]
\log(r-1) \\ \log(\eta_1) \\ \vdots \\ \log(\eta_d)
\end{pmatrix}
$$ in the whole real line $\reals^{d+1}$.
The optimal value $\vt_{\textup{opt}}$ can be transformed back to the desired interval $(1,\infty) \times (0,\infty)^d$ using 
\begin{align*}
\vtheta_{\textup{opt}} = 
\begin{pmatrix}[1]
1 + e^{t_{1, \textup{opt}}} \\
e^{t_{2, \textup{opt}}} \\
\vdots \\
e^{t_{d+1, \textup{opt}}} \\
\end{pmatrix}.
\end{align*}
Similarly one can map the kernel order $q \in (0,1)$ \secref{sec:exp_decay_kernel}, and $\veta$ in to a multivariate hyperparameter search.

\clearpage

\Chapter{Numerical Results and Observations} 
\label{sec:NumExp}

\JRNote{use uniformly randomly chosen $\varepsilon$ instead 4 fixed}

Fast Bayesian cubature algorithms developed in this research are demonstrated using three commonly used integration examples.
These integrals were evaluated using both the algorithms \code{cubBayesLattice\_g} and \code{cubBayesNet\_g}. The first example shows evaluating a multivariate Gaussian probability given the interval. The second example shows integrating the Keister's function, and the final example shows computing an Asian arithmetic option pricing.  

% \Section{Test Results and Observations for \code{cubBayesLattice\_g}}
\Section{Testing Methodology}
\label{sec:numerical_experiments_cubBayesLattice}

% Three integrals were evaluated using our GAIL algorithms:  a multivariate Gaussian probability, the Keister's example, and an option pricing example. 
Four hundred different error tolerances, $\varepsilon$, were randomly chosen from a fixed interval  for each example. 
The intervals for error tolerance were chosen depending on the difficulty of the problem.
The nodes used in \allowbreak \code{cubBayesLattice\_g} were the randomly shifted lattice points supplied by GAIL, whereas the nodes used in \code{cubBayesNet\_g} were the randomly scrambled and shifted Sobol' points supplied by MATLAB's Sobol' sequence generator. 

For each integral example, and each stopping criteria---empirical Bayes, full Bayes, and generalized cross-validation---our algorithm is run with each randomly chosen error tolerance as mentioned above.  For each test, the execution time is plotted against $\abs{\mu - \hmu}/\varepsilon$.  We expect $\abs{\mu - \hmu}/\varepsilon$ to be no greater than one, but hope that it is not too much smaller than one, which would indicate a stopping criterion that is too conservative.
%Unlike the examples in previous \secref{sec:numerical_experiments_cubBayesLattice}, periodization variable transform is not required.

Periodization variable transforms are used in the examples with \\ \allowbreak \code{cubBayesLattice\_g}, which assumes the integrands to be periodic in $[0,1]^d$. But the \allowbreak \code{cubBayesNet\_g} does not need this additional requirement, so the integrands are used directly.

% \Section{Test Results and Observations for \code{cubBayesNet\_g}}
% \label{sec:numerical_experiments_cubBayesNet}

%Similar to the previous examples, we used four different error tolerances, $\varepsilon$,  set for each example, with the tolerances chosen depending on the difficulty of the problem.  
%For each integral,  each tolerance, and each of our stopping criteria---empirical Bayes, full Bayes, and generalized cross-validation---our algorithm is run  $100$ times.  For each test, the execution time is plotted against $\abs{\mu - \hmu}/\varepsilon$.  
% We expect $\abs{\mu - \hmu}/\varepsilon$ to be no greater than one, but hope that it is not too much smaller than one, which would indicate a stopping criterion that is too conservative. 

\Section{Multivariate Gaussian Probability}

This example is introduced in Section~\ref{MVN_example}, where we use the Mat\'ern covariance kernel.  We reuse $f_{\textup{Genz}}$ \eqref{eqn:fGenzdef} and apply a periodization transform to obtain $f_{\textup{GenzP}}$ when required.

\Subsection{Using \code{cubBayesLattice\_g}}
As required by the algorithm, we apply Sidi's $C^2$  periodization to $ f_{\textup{Genz}}$ \eqref{eqn:fGenzdef}, and chose $d=3$ and $r=2$. The simulation results for this example integrand  are summarized in Figures \ref{fig:mvn-guaranteed-MLE}, \ref{fig:mvn-guaranteed-FB}, and \ref{fig:mvn-guaranteed-GCV}.  In all cases, $\code{cubBayesLattice\_g}$ returns an approximation within the prescribed error tolerance. We used the same setting as before with generic slow Bayesian cubature in \secref{MVN_example} for comparision. For error threshold $\varepsilon=10^{-5}$ with empirical stopping criterion, our fast algorithm takes 0.001 seconds as shown in \figref{fig:mvn-guaranteed-MLE} whereas the basic algorithm takes 30 seconds as shown in \figref{fig:MVN_Metern_d2b2}. 
Amongst the three stopping criteria, GCV achieved the results faster than others but it is less conservative. 
One can also observe from the figures that the credible intervals are wider, causing true error much smaller than requested.
This could be due to the periodization transformed integrand, $f_\textup{GenzP}$, being smoother than the $r=2$ kernel approximation. Using a kernel of matching smoothness could produce right credible intervals.

\begin{figure}
	\centering
	%d=3 problem transformed into d=2
	\includegraphics[width=0.95\linewidth]{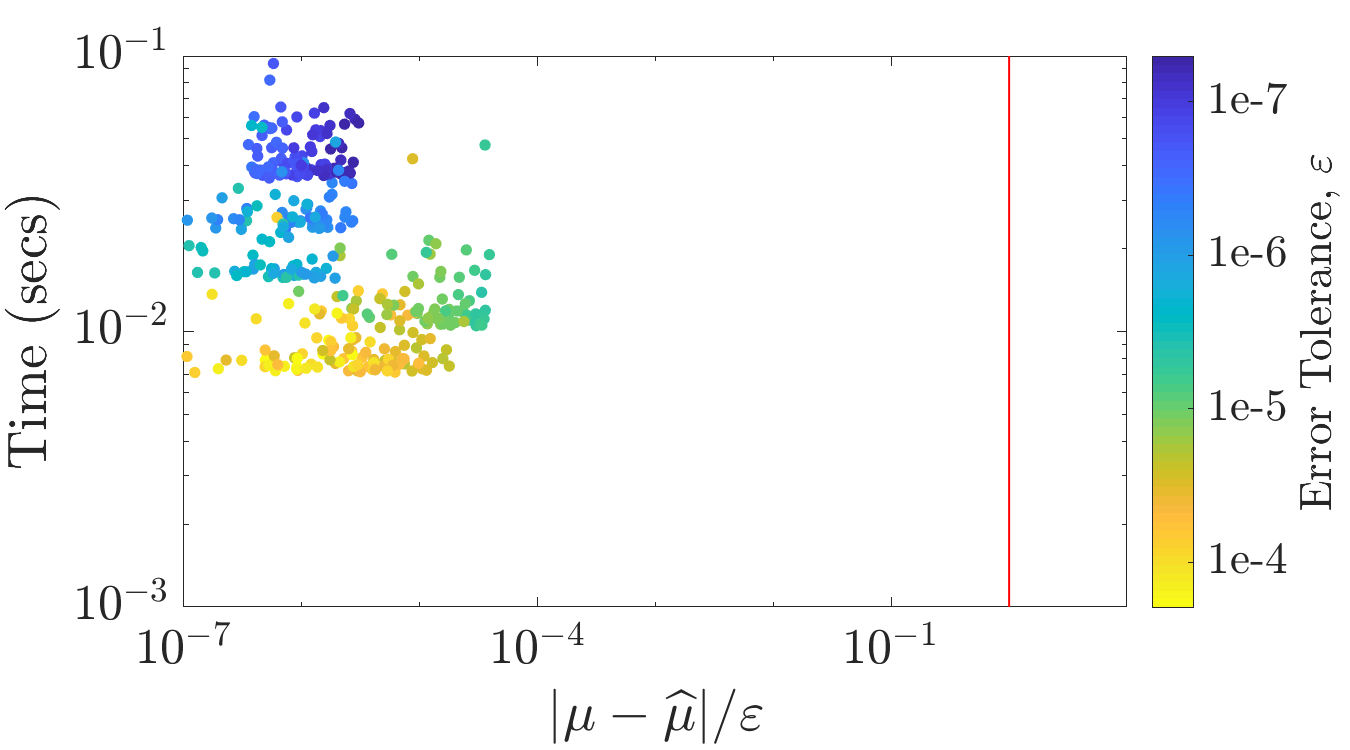}
	\caption[Lattice: MVN guaranteed: MLE]{\code{cubBayesLattice\_g}: Multivariate normal probability example using the empirical Bayes stopping criterion.}
	\label{fig:mvn-guaranteed-MLE}
\end{figure}
\begin{figure}
	\centering
	\includegraphics[width=0.95\linewidth]{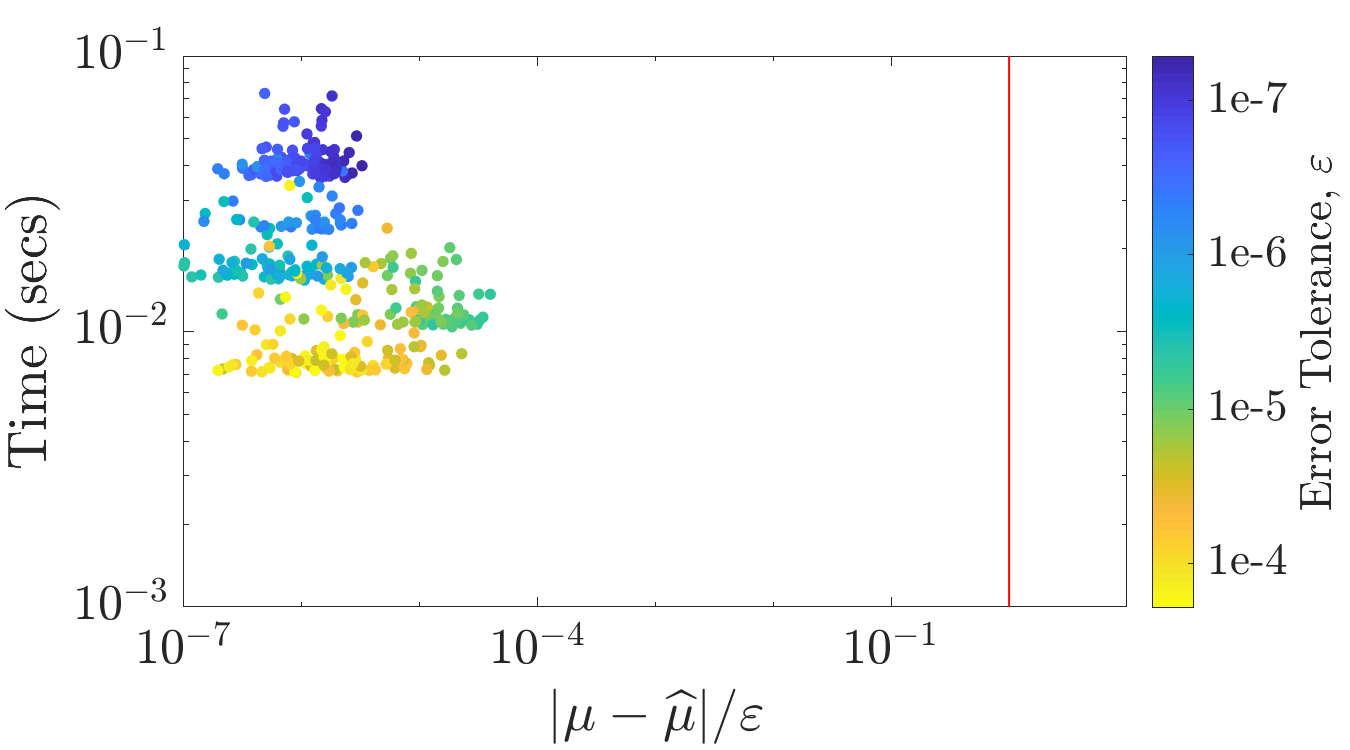}
	\caption[Lattice: MVN guaranteed: Full Bayes]{\code{cubBayesLattice\_g}: Multivariate normal probability example using the full Bayes stopping criterion.}
	\label{fig:mvn-guaranteed-FB}
\end{figure}
\begin{figure}
	\centering
	\includegraphics[width=0.95\linewidth]{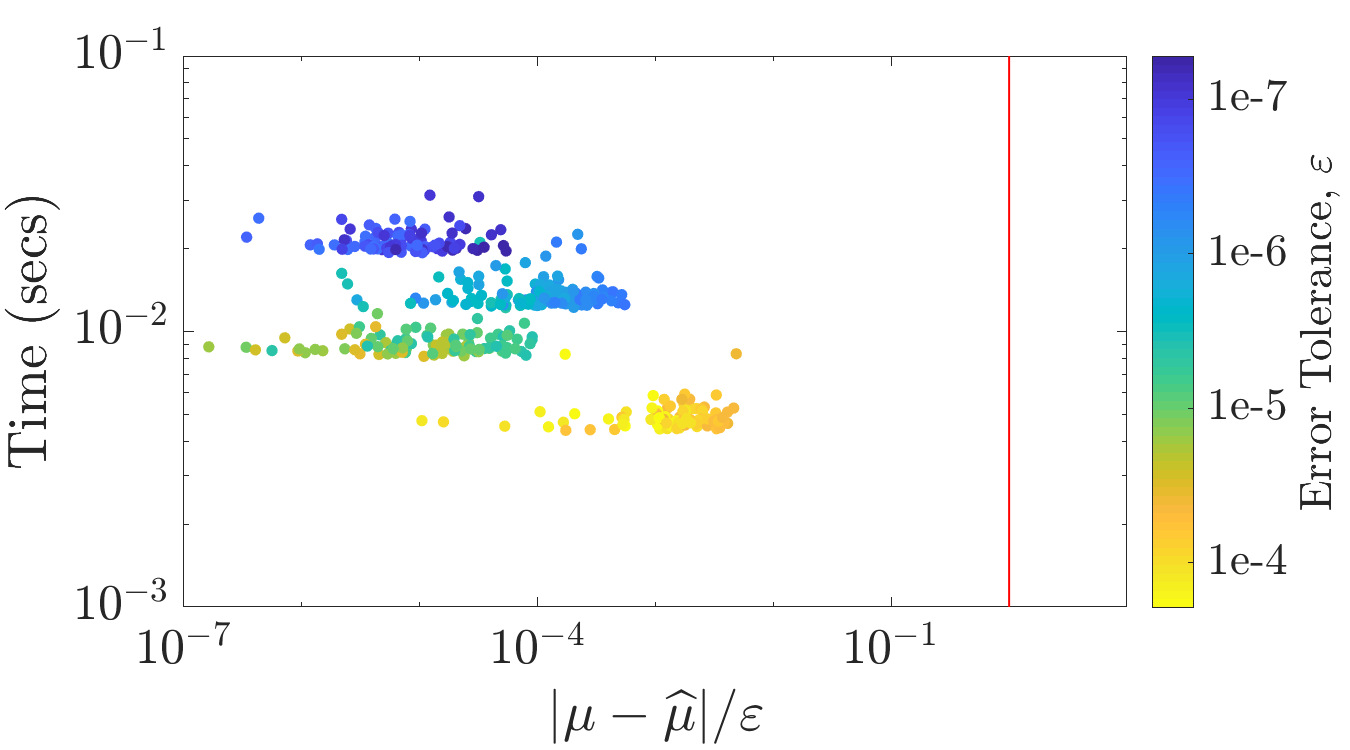}
	\caption[Lattice: MVN guaranteed: GCV]{\code{cubBayesLattice\_g}: Multivariate normal probability example using the GCV stopping criterion.}
	\label{fig:mvn-guaranteed-GCV}
\end{figure}

\Subsection{Using \code{cubBayesNet\_g}}
Here we use $ f_{\textup{Genz}}$ \eqref{eqn:fGenzdef} without any periodization, and chose $d=3$ and $r=1$. The simulation results for this example integrand are summarized in Figures \ref{fig:Sobol-mvn-guaranteed-MLE}, \ref{fig:Sobol-mvn-guaranteed-FB}, and \ref{fig:Sobol-mvn-guaranteed-GCV}.  In all cases, {\code{cubBayesNet\_g} returns an approximation within the prescribed error tolerance. We used the same setting as before with generic slow Bayesian cubature in \secref{MVN_example} for comparision. For error threshold $\varepsilon=10^{-5}$ with empirical stopping criterion, our fast algorithm takes about 2 seconds as shown in \figref{fig:mvn-guaranteed-MLE} whereas the basic algorithm takes 30 seconds as shown in \figref{fig:MVN_Metern_d2b2}. 
{\code{cubBayesNet\_g} uses fast Walsh transform which is slower in MATLAB due to the way it was implemented. This is reason it takes more longer the {\code{cubBayesLattice\_g}. 
But comparing the number of samples, $n$, used for integration provides more insight which directly relates to alogrithm's computational cost. The {\code{cubBayesLattice\_g} used $n=16384$ samples whereas {\code{cubBayesNet\_g} used $n=32768$ samples even with $r=1$ order kernel.

Amongst the three stopping criteria, GCV achieved the results faster than others but it is less conservative. 
One can also observe from the figures that the credible intervals are narrower than in \figref{fig:mvn-guaranteed-MLE}.
This shows that {\code{cubBayesNet\_g} with $r=1$ kernel more accurately approximates the integrand.

\begin{figure}
\centering
%d=3 problem transformed into d=2
\includegraphics[width=0.95\linewidth]{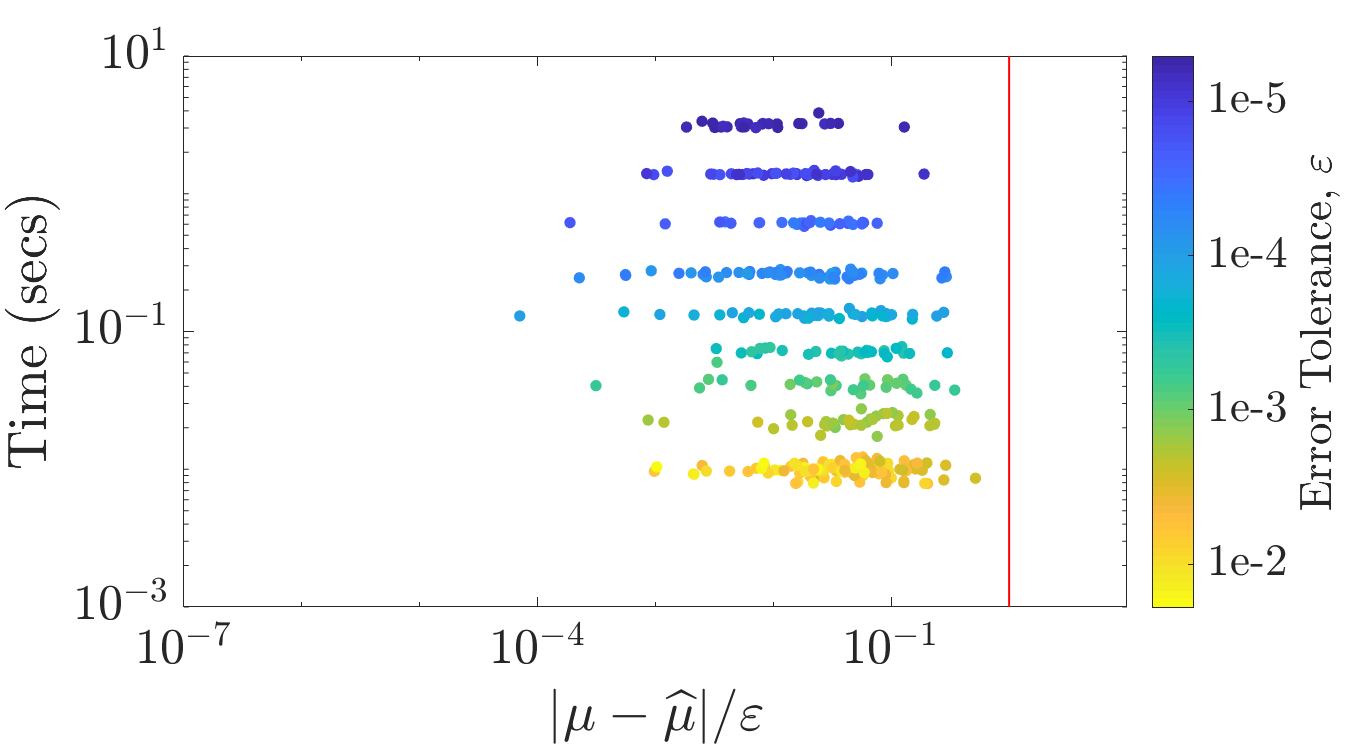}
\caption[Sobol: MVN guaranteed: MLE]{\code{cubBayesNet\_g}: Multivariate normal probability example with empirical Bayes stopping criterion.}
\label{fig:Sobol-mvn-guaranteed-MLE}
\end{figure}
\begin{figure}
\centering
%d=3 problem transformed into d=2
\includegraphics[width=0.95\linewidth]{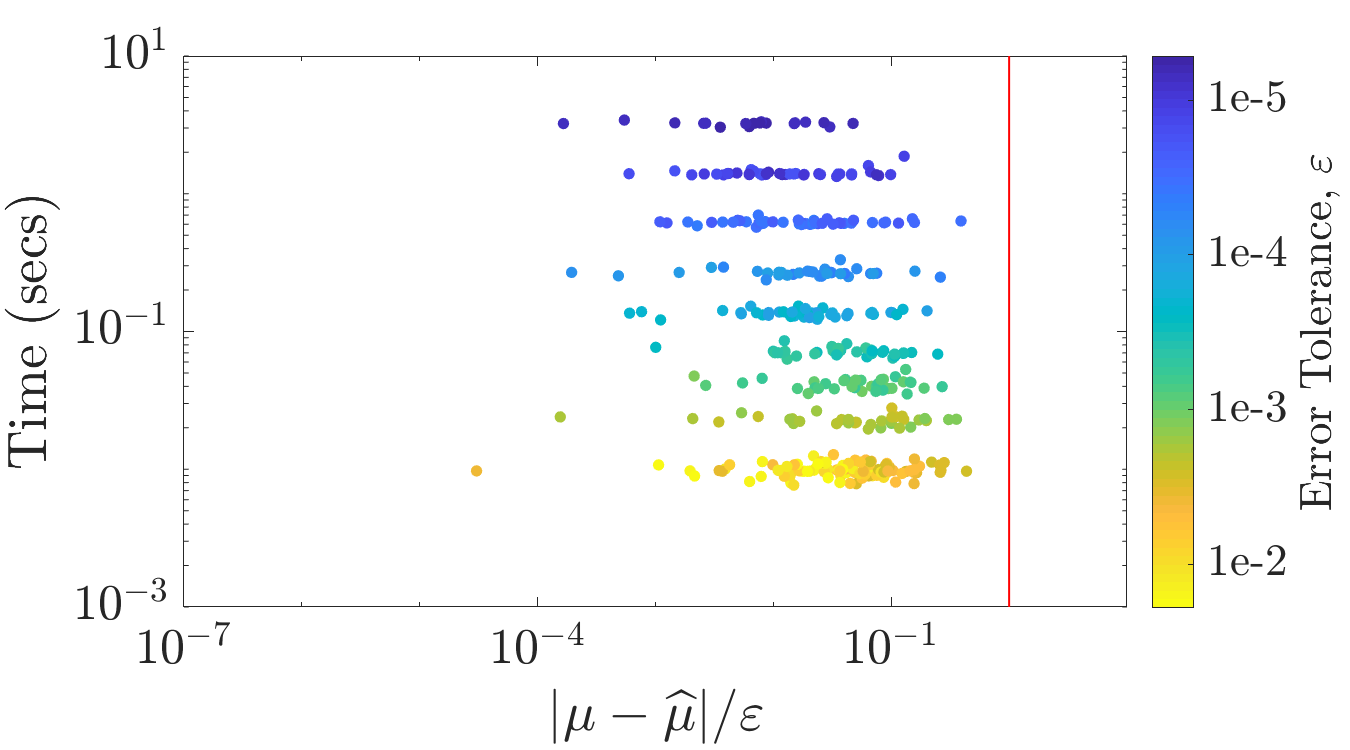}
\caption[Sobol: MVN guaranteed: Full Bayes]{\code{cubBayesNet\_g}: Multivariate normal probability example with the full-Bayes stopping criterion.}
\label{fig:Sobol-mvn-guaranteed-FB}
\end{figure}
\begin{figure}
\centering
%d=3 problem transformed into d=2
\includegraphics[width=0.95\linewidth]{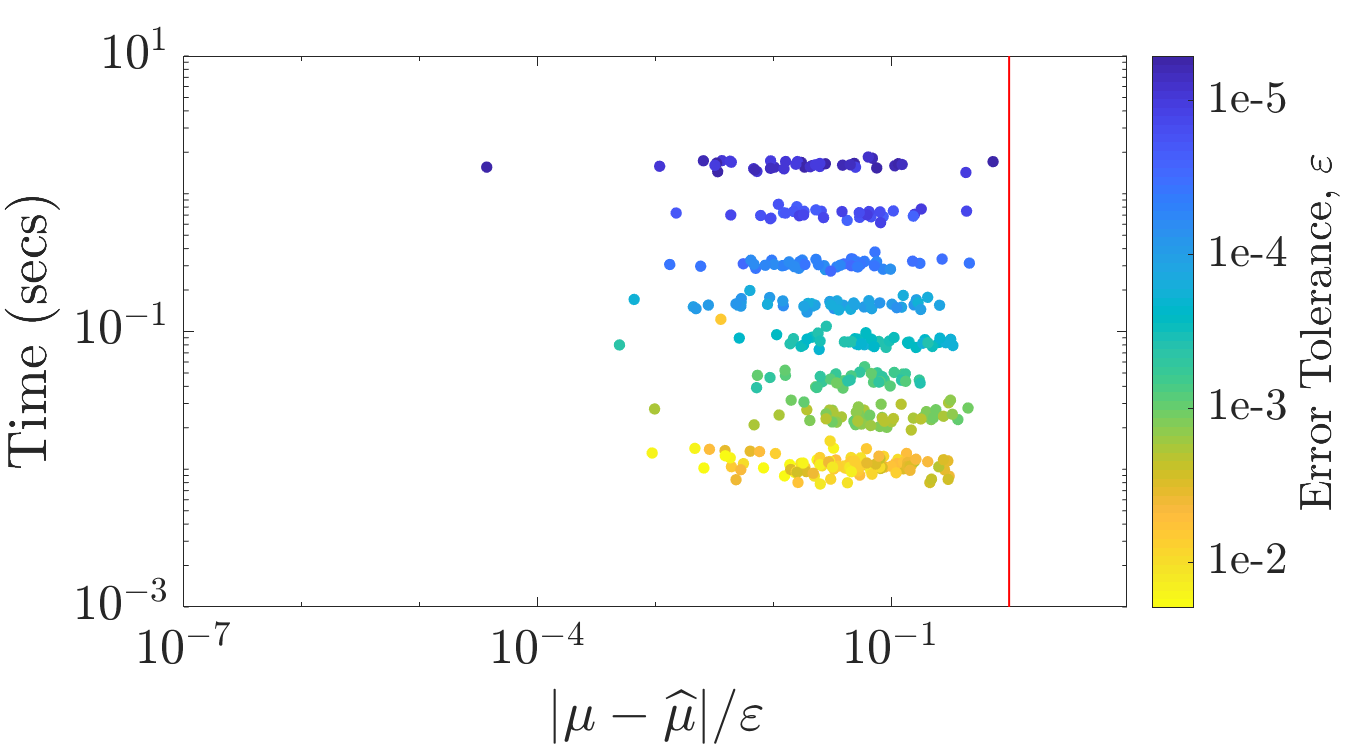}
\caption[Sobol: MVN guaranteed: GCV]{\code{cubBayesNet\_g}: Multivariate normal probability example with the GCV stopping criterion.}
\label{fig:Sobol-mvn-guaranteed-GCV}
\end{figure}

\Section{Keister's Example}

This multidimensional integral function comes from \cite{Kei96} and is inspired by a physics application:
\begin{align}
\label{eqn:keister_integral}
\mu & =  \int_{\reals^d} \cos(\norm{ \vt}) \exp(-\norm{ \vt }^2) \, \dvt \\
\nonumber
%&  = 
%\int_{\reals^d} \cos(a\norm{ \vt}) \exp(-a^2\norm{ \vt }^2)  a^d \, \dvt \\
& = \int_{[0,1]^d} f_{\textup{Keister}}(\vx) \, \dvx,\\
\intertext{where }
\nonumber
f_\textup{Keister}(\vx) &= \pi^{d/2} \cos\left(\norm{ \Phi^{-1}(\vx)/2}\right)  ,
\end{align}
and $\Phi$ is the standard normal distribution.
%\begin{figure}
%	\captionsetup[subfigure]{labelformat=empty}
%	\begin{subfigure}[h]{0.48\linewidth}
%		\includegraphics[width=1.1\linewidth]{Keister_wholeR_1D}
%	\end{subfigure}
%	\begin{subfigure}[h]{0.48\linewidth}
%		\includegraphics[width=1.1\linewidth]{Keister_cube_1D}
%	\end{subfigure}
%	\caption{Keister function in d=2 and its transformed version in $[0,1]^2$ with various values of $a$.}
%	\label{fig:keister}
%\end{figure}
The true value of $\mu$ can be calculated iteratively in terms of a quadrature as follows:  
\begin{equation*}
\mu = \frac{2 \pi^{d/2} I_c(d)}{\Gamma(d/2)}, \quad d=1,2, \ldots
\end{equation*}
where $\Gamma$ denotes the gamma function, and
\begin{align*}
I_c(1) &= \frac{\sqrt{\pi}}{2 \exp(1/4)}, 
\\
I_s(1) &= \int_{x=0}^\infty \exp(-\vx^T\vx)\sin(\vx) \, \dvx 
\\
& =  0.4244363835020225,
\\
I_c(2) &= \frac{1-I_s(1)}{2}, \qquad
I_s(2) = \frac{I_c(1)}{2}
\\
I_c(j) &= \frac{(j-2)I_c(j-2)-I_s(j-1)}{2},
\qquad j =3,4,\ldots
\\
I_s(j) &= \frac{(j-2)I_s(j-2)-I_c(j-1)}{2},
\qquad j =3,4,\ldots.
% ref: https://www.mathworks.com/help/matlab/ref/gamma.html
\end{align*}

\Subsection{Using \code{cubBayesLattice\_g}}
\label{sec:cubBayesLattice_keister_example}
\JRNote{Discuss the big gap between 1e-5 and 1e-4}
\begin{figure}
	\centering
	\includegraphics[width=0.95\linewidth]{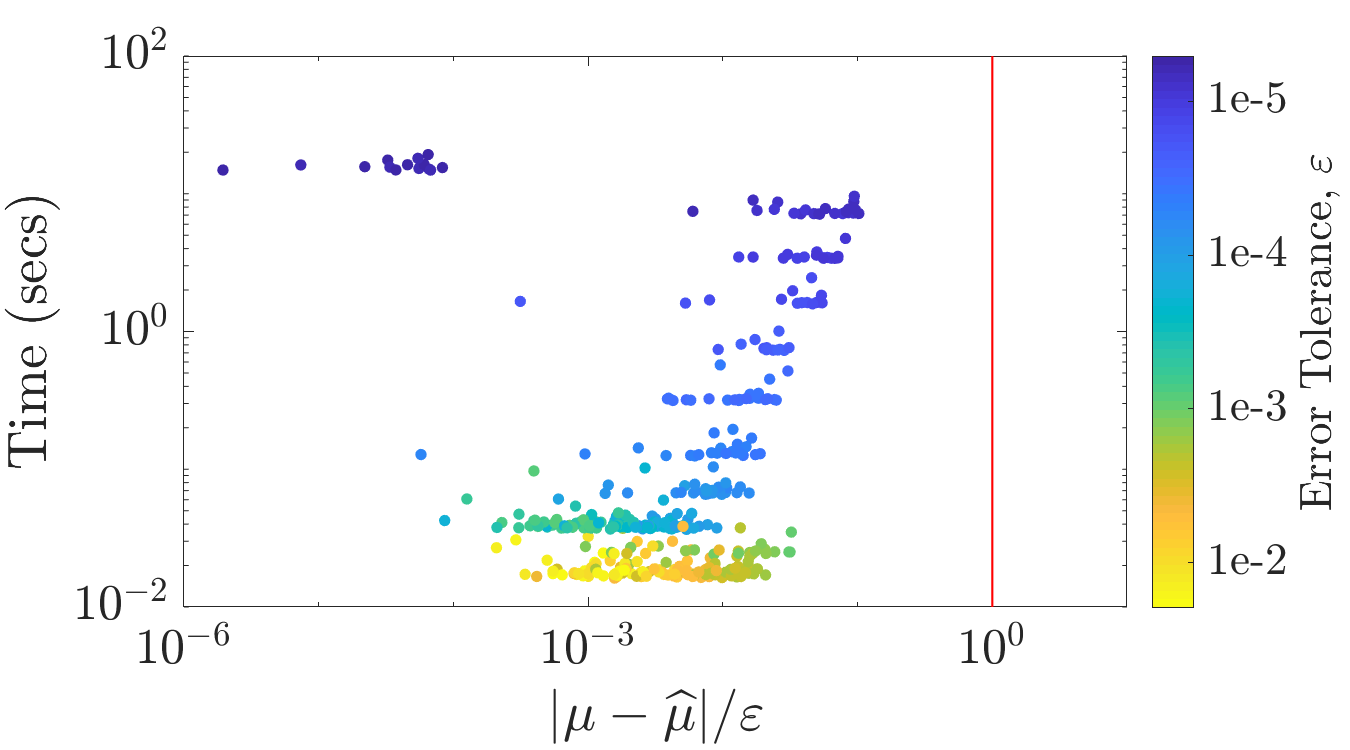}
	\caption[Lattice: Keister guaranteed: MLE]{\code{cubBayesLattice\_g}: Keister example using the empirical Bayes stopping criterion.}
	\label{fig:keister-guaranteed-MLE}
\end{figure}
\begin{figure}
	\centering
	\includegraphics[width=0.95\linewidth]{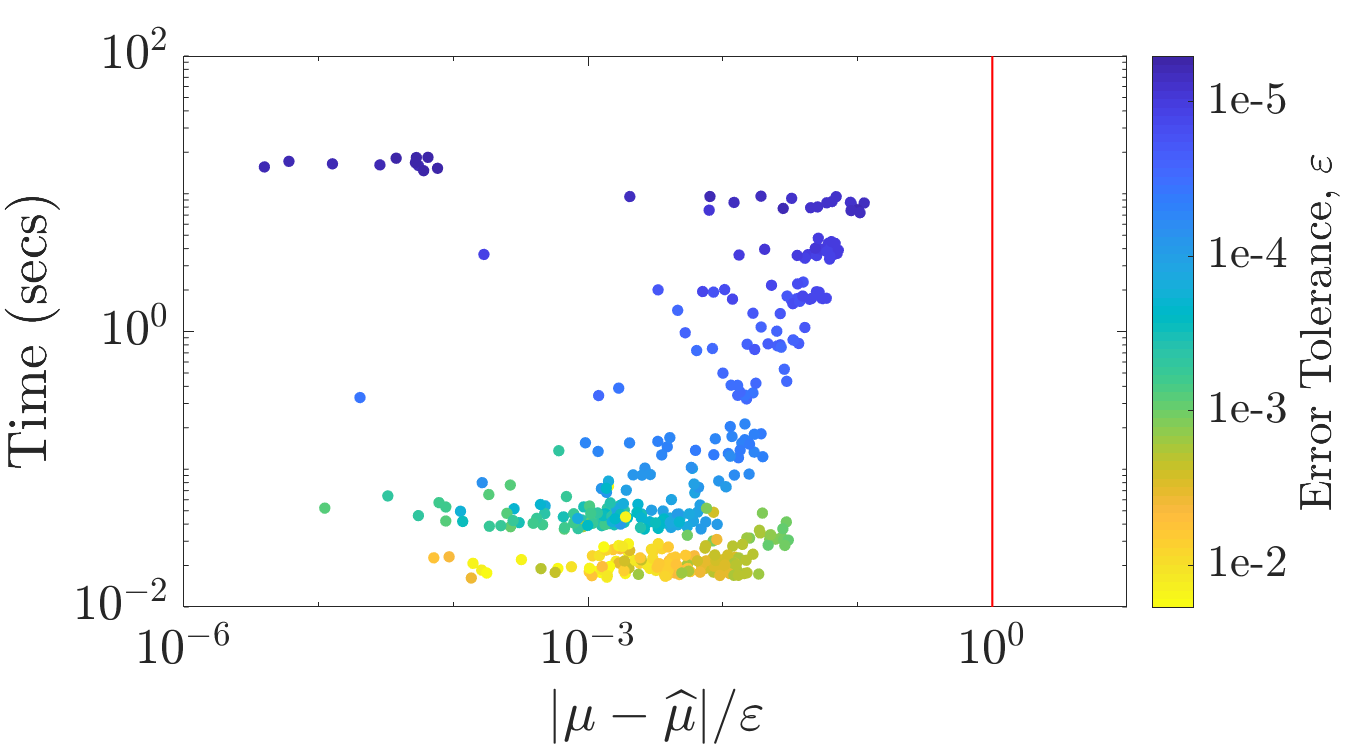}
	\caption[Lattice: Keister guaranteed: Full Bayes]{\code{cubBayesLattice\_g}: Keister example using the full Bayes stopping criterion.}
	\label{fig:keister-guaranteed-FB}
\end{figure}
\begin{figure}
	\centering
	\includegraphics[width=0.95\linewidth]{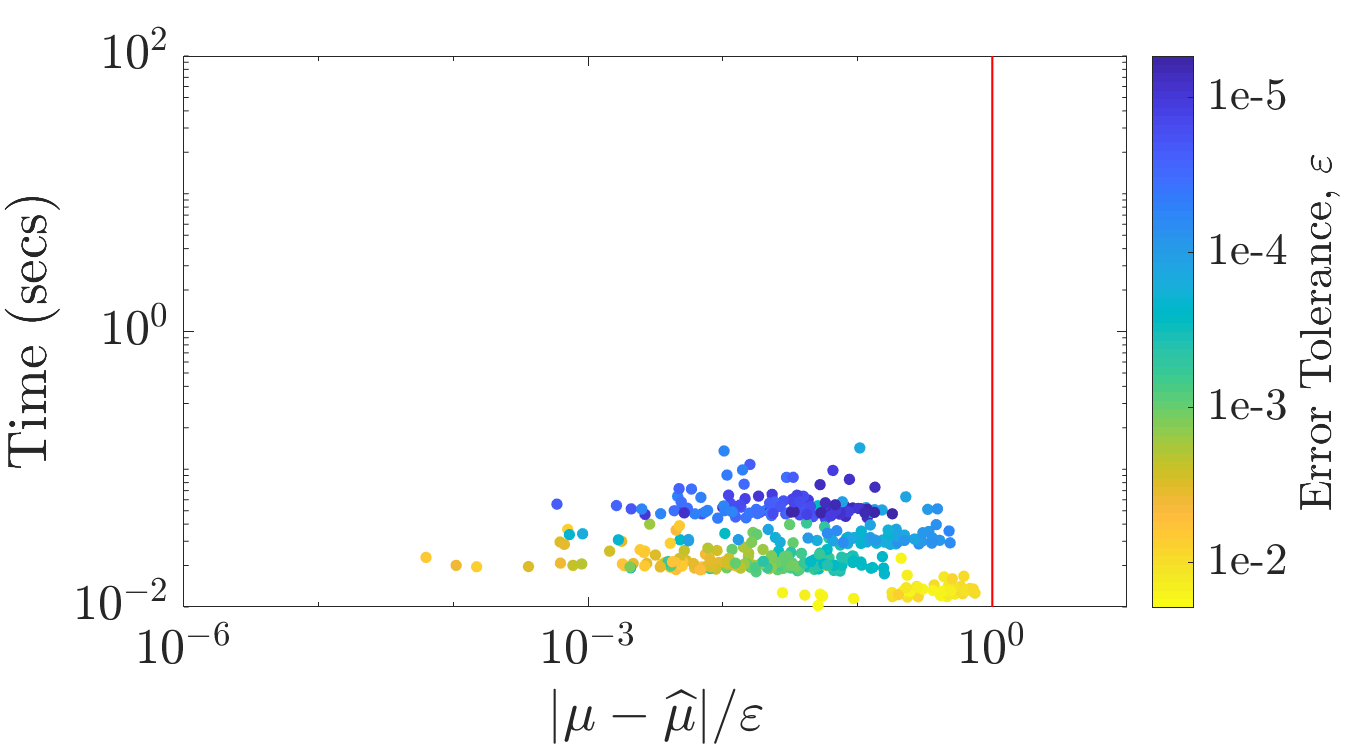}
	\caption[Lattice: Keister guaranteed: GCV]{\code{cubBayesLattice\_g}: Keister example using the GCV stopping criterion.}
	\label{fig:keister-guaranteed-GCV}
\end{figure}
Figures \ref{fig:keister-guaranteed-MLE}, \ref{fig:keister-guaranteed-FB} and \ref{fig:keister-guaranteed-GCV} summarize the numerical tests for this integral.  We used the Sidi's $C^1$ periodization, dimension $d=4$, and $r=2$. 
As we can see the GCV stopping criterion achieved the results faster than the others but it is less conservative similar to the multivariate Gaussian case.

\Subsection{Using \code{cubBayesNet\_g}}
Figures \ref{fig:Sobol-keister-guaranteed-MLE}, \ref{fig:Sobol-keister-guaranteed-FB} and \ref{fig:Sobol-keister-guaranteed-GCV} summarize the numerical tests for this case. We used  dimension $d=4$, and $r=1$.  No periodization transform was used as the integrand need not be periodic. 
In this example, we use $r=1$ order kernel whereas in \secref{sec:cubBayesLattice_keister_example}, $r=2$ kernel was used. This necessitates \code{cubBayesNet\_g} to use more samples for integration.
As observed from the figures, the GCV stopping criterion achieved the results faster than the others but it is less conservative which is also the case with the multivariate Gaussian example.

\begin{figure}
	\centering
	\includegraphics[width=0.95\linewidth]{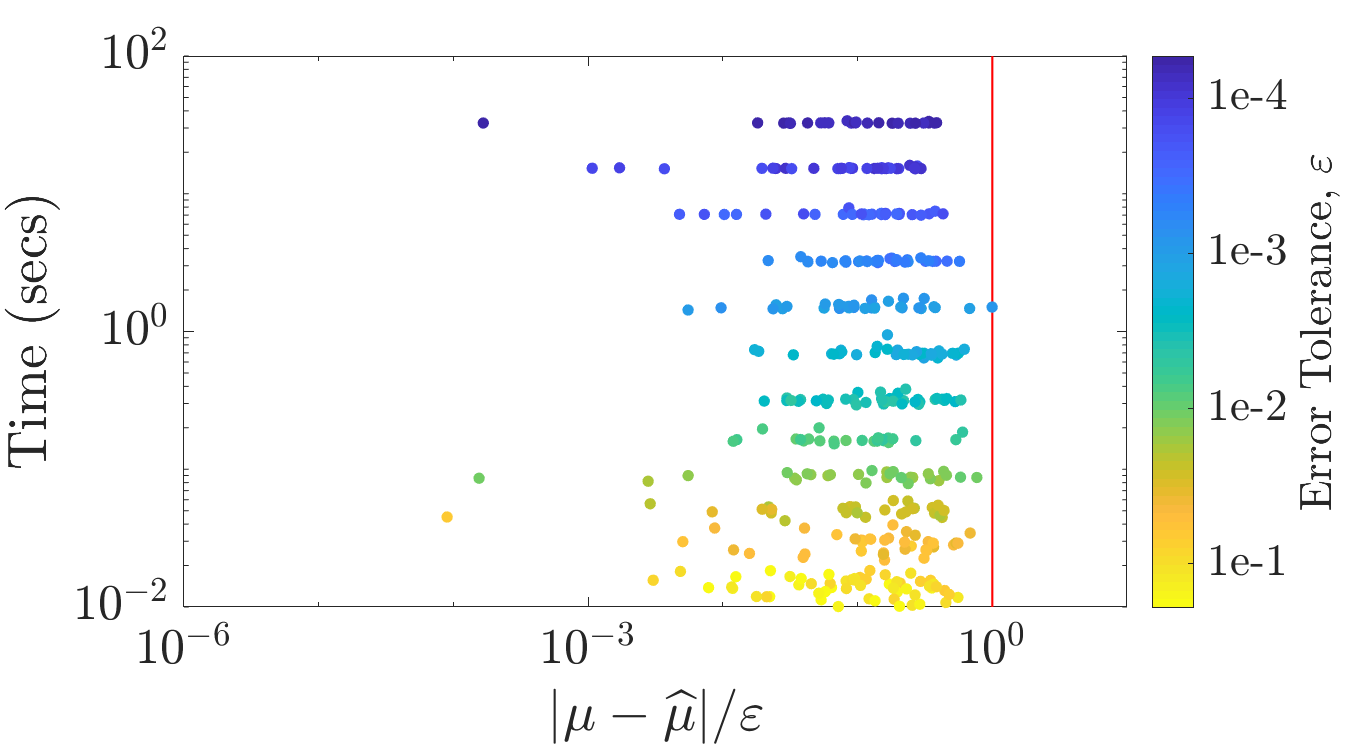}
	\caption[Sobol: Keister guaranteed: MLE]{\code{cubBayesNet\_g}: Keister example using the empirical Bayes stopping criterion.}
	\label{fig:Sobol-keister-guaranteed-MLE}
\end{figure}
\begin{figure}
	\centering
	\includegraphics[width=0.95\linewidth]{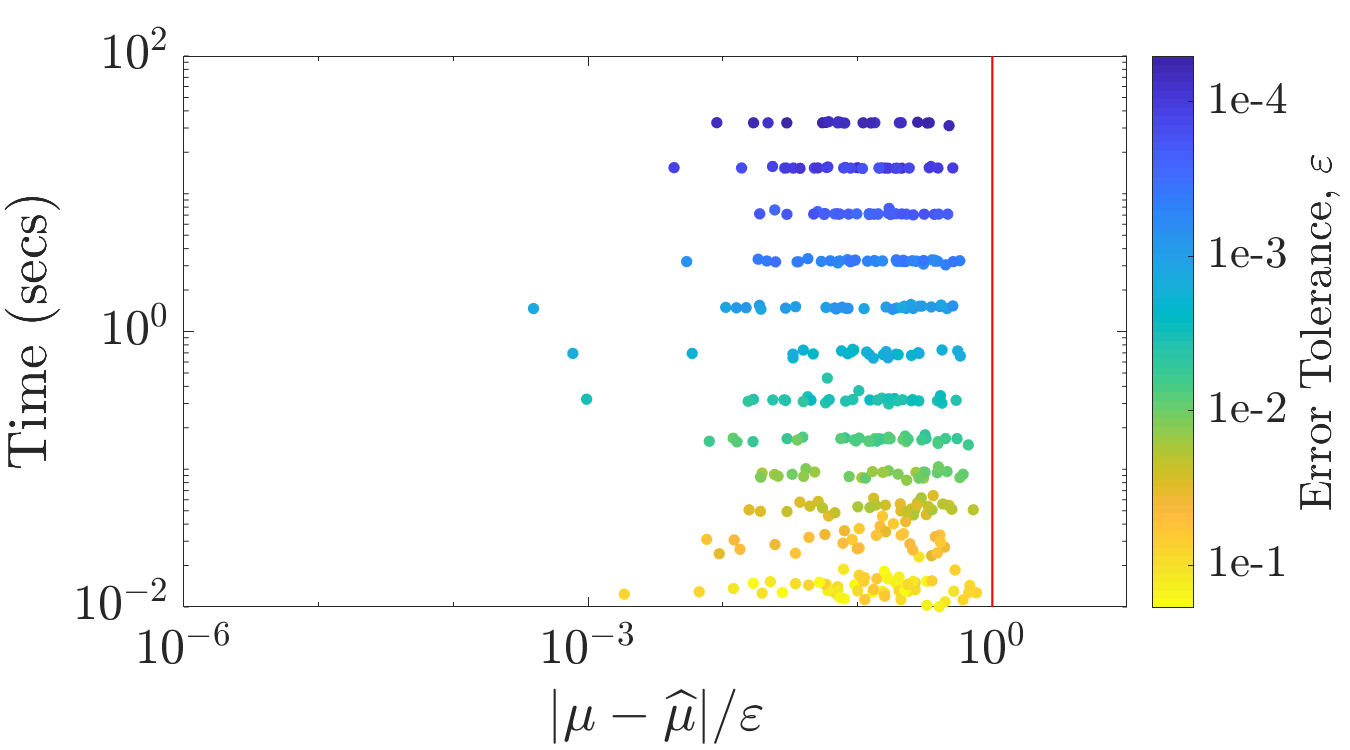}
	\caption[Sobol: Keister guaranteed: Full Bayes]{\code{cubBayesNet\_g}: Keister example using the full-Bayes stopping criterion.}
	\label{fig:Sobol-keister-guaranteed-FB}
\end{figure}
\begin{figure}
	\centering
	\includegraphics[width=0.95\linewidth]{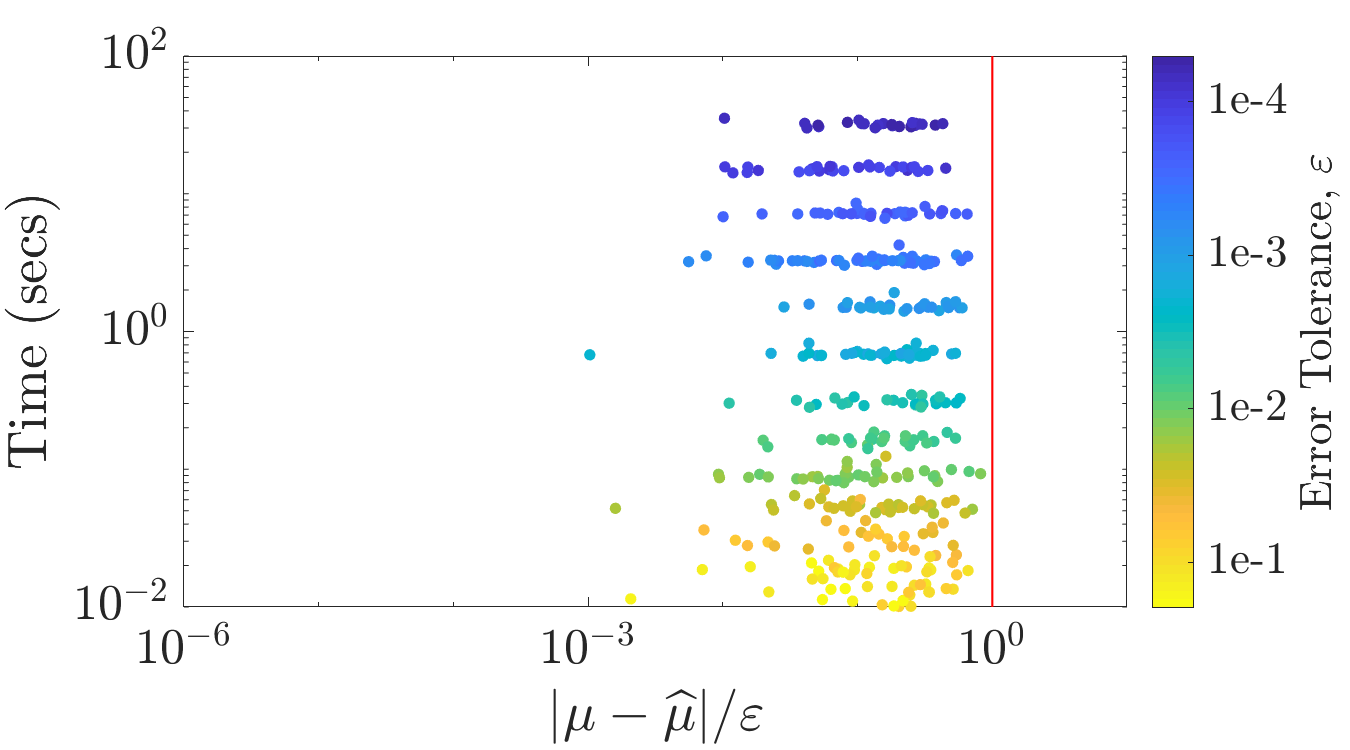}
	\caption[Sobol: Keister guaranteed: GCV]{\code{cubBayesNet\_g}: Keister example using the GCV stopping criterion.}
	\label{fig:Sobol-keister-guaranteed-GCV}
\end{figure}

\Section{Option Pricing}

The price of financial derivatives can often be modeled by high dimensional integrals. If the underlying asset is described in terms of a discretized geometric Brownian motion, then the fair price of the option is:
\begin{equation*}
\mu = \int_{\reals^d} \text{payoff}(\vz) \frac{\exp(\frac 12 \vz^T\mSigma^{-1}\vz)}{\sqrt{(2\pi)^d \det(\mSigma)}} \, \dvz = \int_{[0,1]^d} f(\vx) \, \dvx,
\end{equation*} 
where {payoff($\cdot$)} defines the discounted payoff of the option,
\begin{align*}
\mSigma &= (T/d) \bigl(\min(j,k) \bigr)_{j,k=1}^d = \mL \mL^T,\\
f(\vx) &= \text{payoff} \left(\mL 
\begin{pmatrix}
\Phi^{-1}(x_1) \\ \vdots \\ \Phi^{-1}(x_d)
\end{pmatrix} \right).
\end{align*}
The Asian arithmetic mean call option has a payoff of the form
\begin{align*}
\text{payoff}(\vz) &= \max\left( \frac 1d  \sum_{j=1}^d S_j(\vz) - K, 0 \right) \me^{-r T}, \\
S_j(\vz) &= S_0 \exp\bigl((r-\sigma^2/2)jT/d + \sigma \sqrt{T/d} z_j \bigr).
\end{align*}
Here, $T$ denotes the time to maturity of the option, $d$ the number of time steps, $S_0$ the initial price of the stock, $r$ the interest rate, $\sigma$ the volatility, and $K$ the strike price.  

\Subsection{Using \code{cubBayesLattice\_g}}
The Figures \ref{fig:optprice-guaranteed-MLE}, \ref{fig:optprice-guaranteed-FB} and 
\ref{fig:optprice-guaranteed-GCV} summarize the numerical results for this example using
$
T = 1/4, \ \ d = 13, \ \ S_0 = 100, \ \ r =  0.05, \ \ \sigma = 0.5, \ \ K = 100.
$
Moreover, $\mL$ is chosen to be the matrix of eigenvectors of $\mSigma$ times the square root of the diagonal matrix of eigenvalues of $\mSigma$.
Because the integrand has a kink caused by the $\max$ function, it does not help to use a periodizing transform that is very smooth.  We chose the baker's transform \eqref{eq:bakerTrans} and $r = 1$.

\label{sec:cubBayesLattice_option_pricing_example}
\begin{figure}
	\centering
	\includegraphics[width=0.95\linewidth]{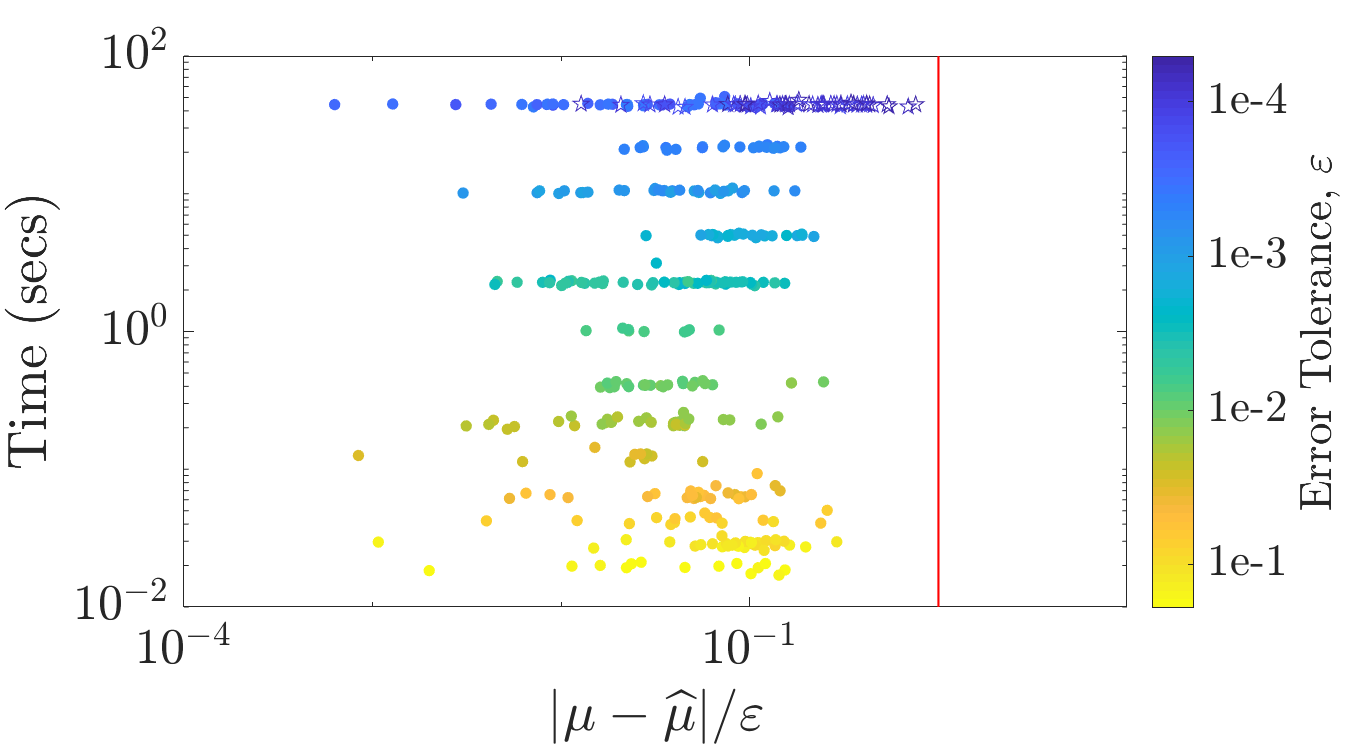}
	\caption[Lattice: Option pricing guaranteed: MLE]{\code{cubBayesLattice\_g}: Option pricing using the empirical Bayes stopping criterion. The hollow stars indicate the algorithm has not met the error threshold $\epsilon$ even with using maximum $n$. }
	\label{fig:optprice-guaranteed-MLE}
\end{figure}
\begin{figure}
	\centering
	\includegraphics[width=0.95\linewidth]{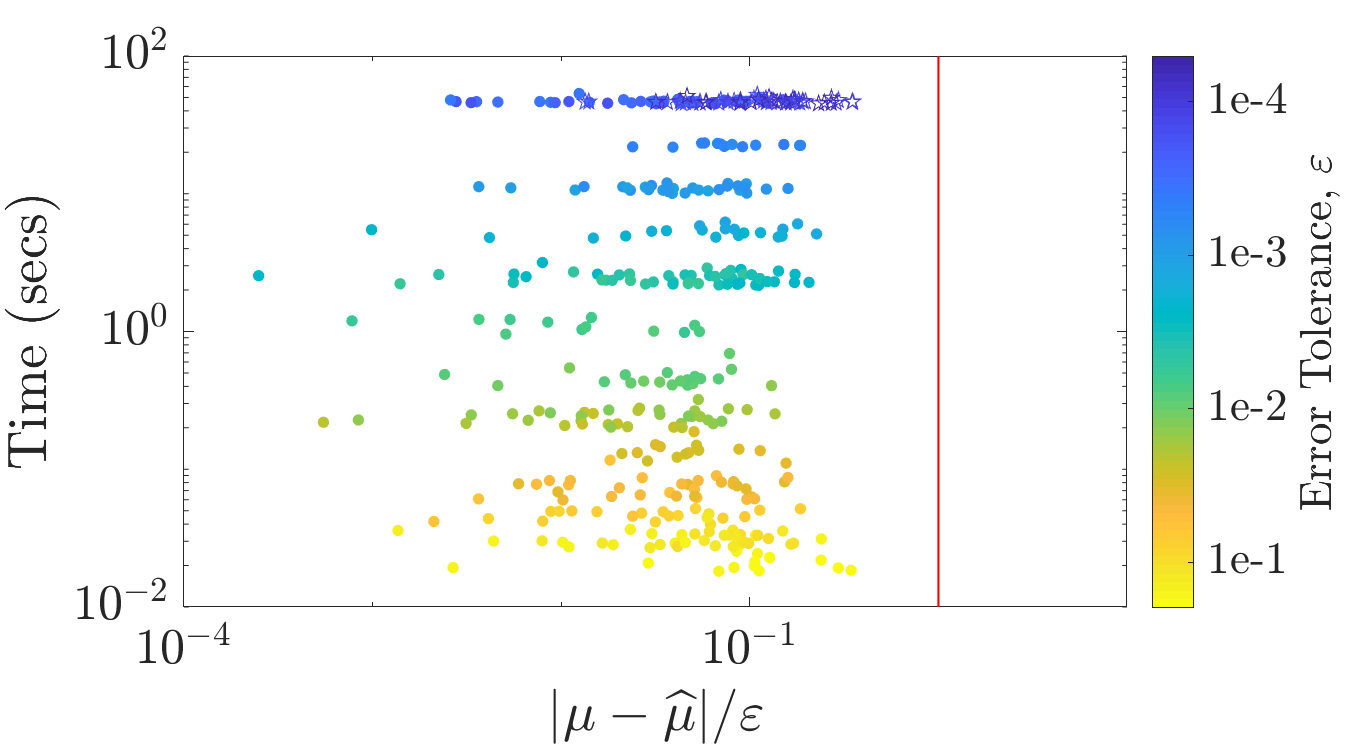}
	\caption[Lattice: Option pricing guaranteed: Full Bayes]{\code{cubBayesLattice\_g}: Option pricing using the full Bayes stopping criterion. The hollow stars indicate the algorithm has not met the error threshold $\epsilon$ even with using maximum $n$.}
	\label{fig:optprice-guaranteed-FB}
\end{figure}
\begin{figure}
	\centering
	\includegraphics[width=0.95\linewidth]{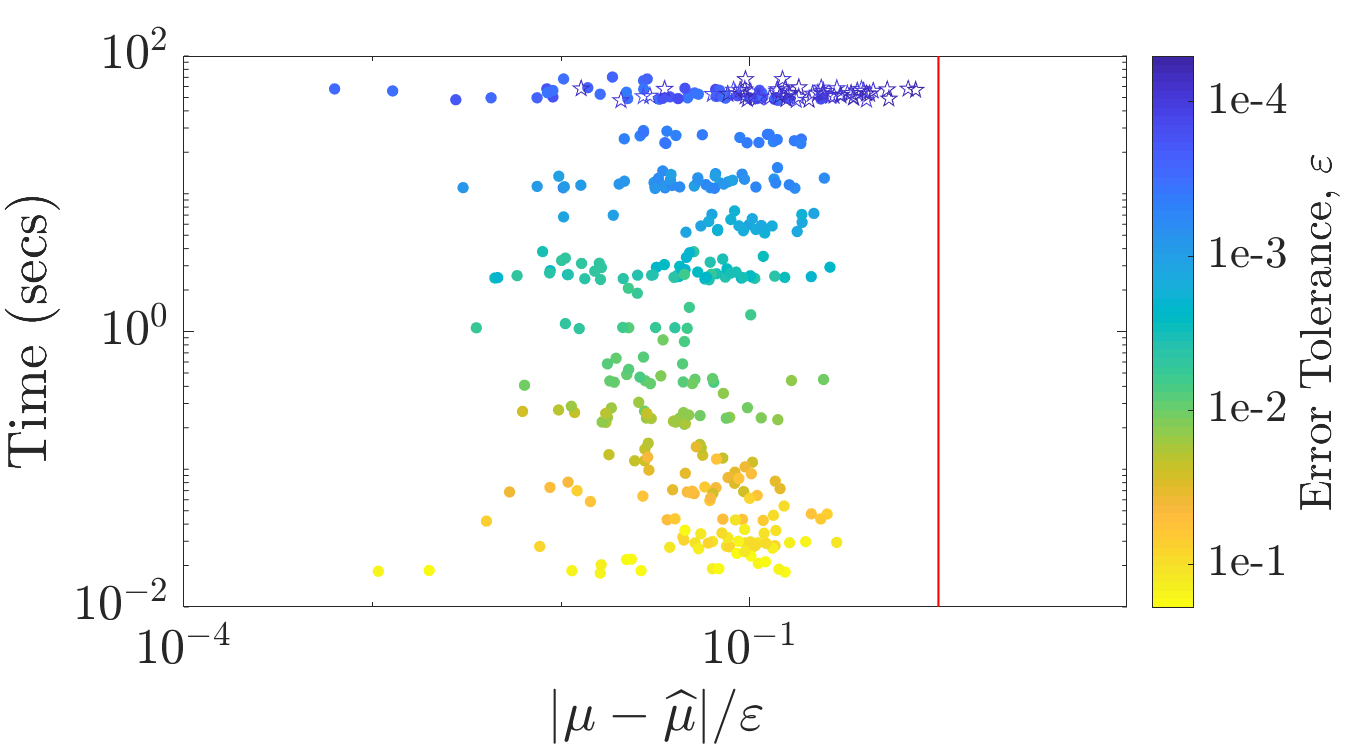}
	\caption[Lattice: Option pricing guaranteed: GCV]{\code{cubBayesLattice\_g}: Option pricing using the GCV stopping criterion. The hollow stars indicate the algorithm has not met the error threshold $\epsilon$ even with using maximum $n$.} 
	\label{fig:optprice-guaranteed-GCV}
\end{figure}

\Subsection{Using \code{cubBayesNet\_g}}
%\Subsection{Option Pricing}
The Figures \ref{fig:Sobol-optprice-guaranteed-MLE}, \ref{fig:Sobol-optprice-guaranteed-FB} and 
\ref{fig:Sobol-optprice-guaranteed-GCV} summarize the numerical results for the option pricing example using the same values for,
$
%T = 1/4, \ \ d = 13$ \ \ S_0 = 100, \ \ r =  0.05, \ \ \sigma = 0.5, \ \ K = 100.
T, \ \ d, \ \ S_0$, $\ \ r, \ \ \sigma, \ \ K
$, as in \secref{sec:cubBayesLattice_option_pricing_example}.
As mentioned before, this integrand has a kink caused by the $\max$ function, so, \code{cubBayesNet\_g} could be more efficient than \code{cubBayesLattice\_g}, as no periodization transform is required. This can be observed from the number of samples used for intgration to meet the same error threshold. For the error tolerance, $\varepsilon=10^{-3}$,  \code{cubBayesLattice\_g} used $n=2^{20}$ samples, whereas \code{cubBayesNet\_g} used $n=2^{17}$ samples.

\begin{figure}
	\centering
	\includegraphics[width=0.95\linewidth]{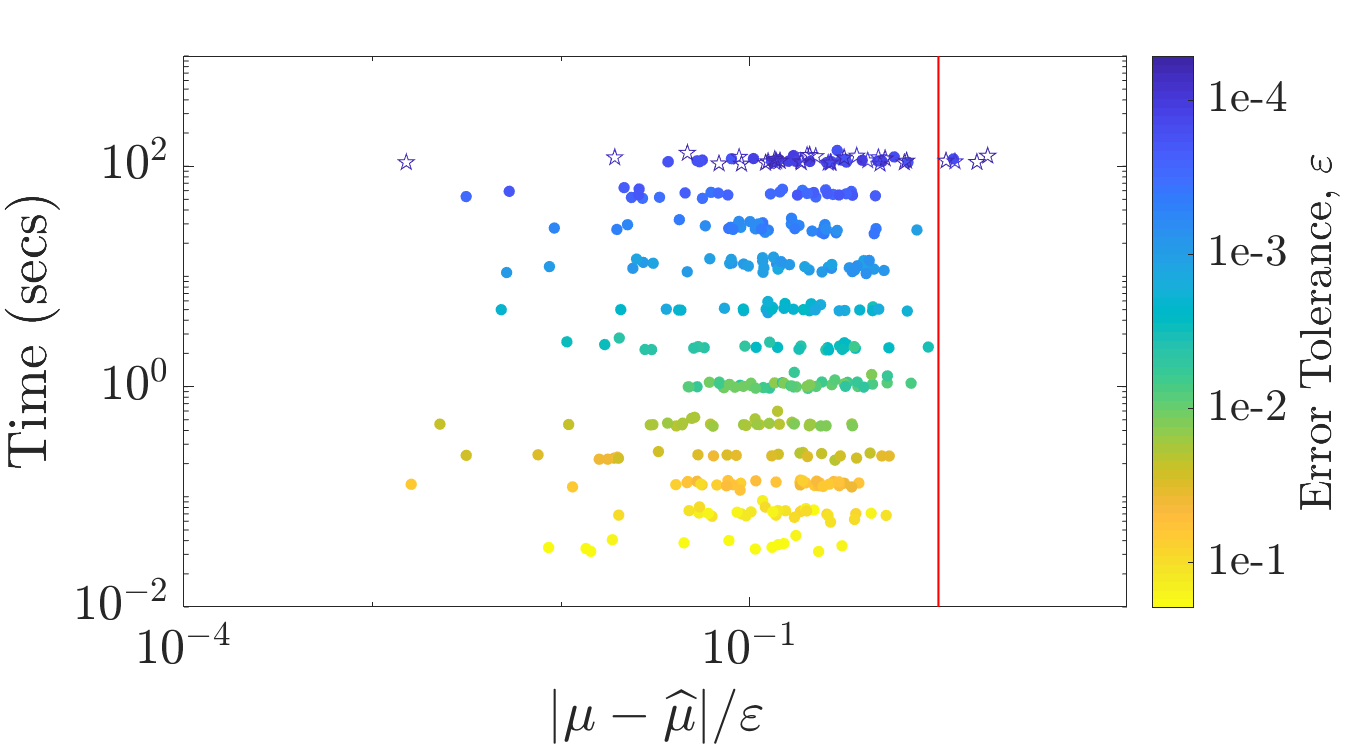}
	\caption[Sobol: Option pricing guaranteed: MLE]{\code{cubBayesNet\_g}: Option pricing using the empirical Bayes stopping criterion. The hollow stars indicate the algorithm has not met the error threshold $\epsilon$ even with using maximum $n$.}
	\label{fig:Sobol-optprice-guaranteed-MLE}
\end{figure}
\begin{figure}
	\centering
	\includegraphics[width=0.95\linewidth]{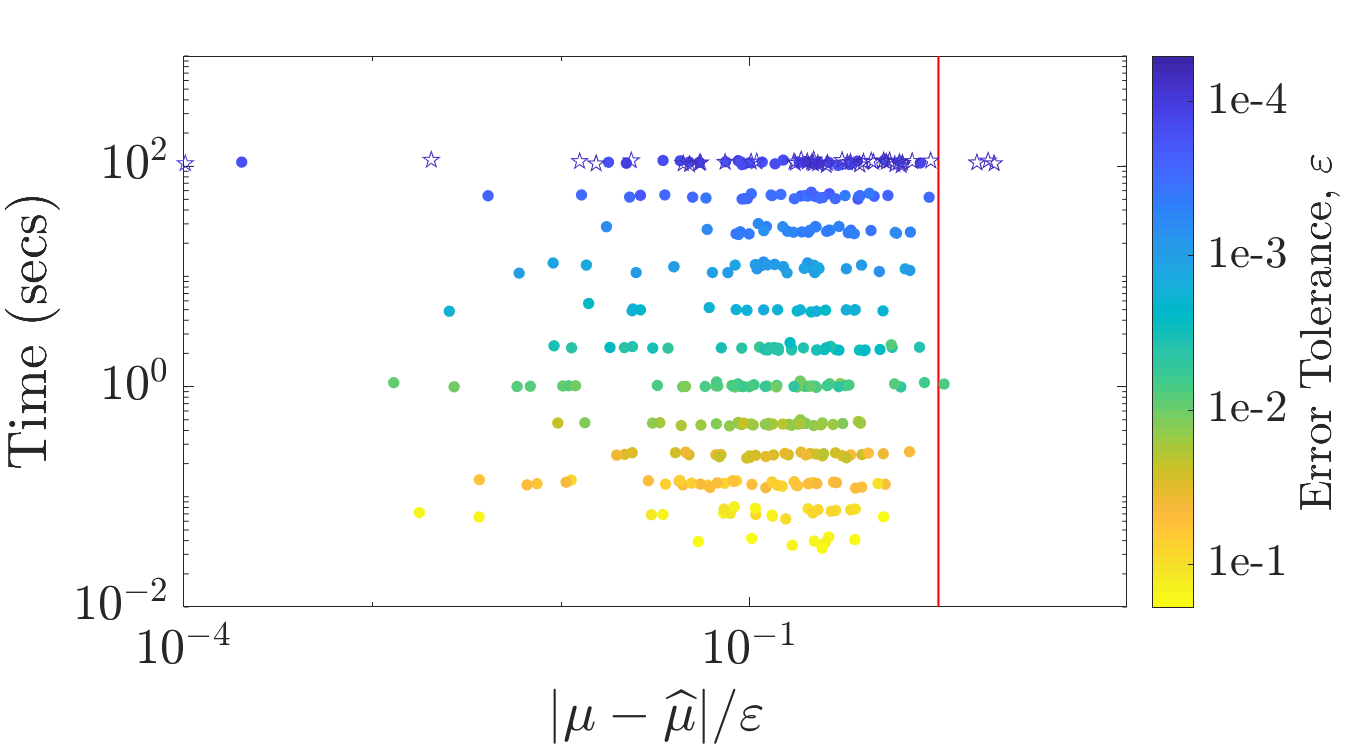}
	\caption[Sobol: Option pricing guaranteed: Full Bayes]{\code{cubBayesNet\_g}: Option pricing using the full-Bayes stopping criterion. The hollow stars indicate the algorithm has not met the error threshold $\epsilon$ even with using maximum $n$.}
	\label{fig:Sobol-optprice-guaranteed-FB}
\end{figure}
\begin{figure}
	\centering
	\includegraphics[width=0.95\linewidth]{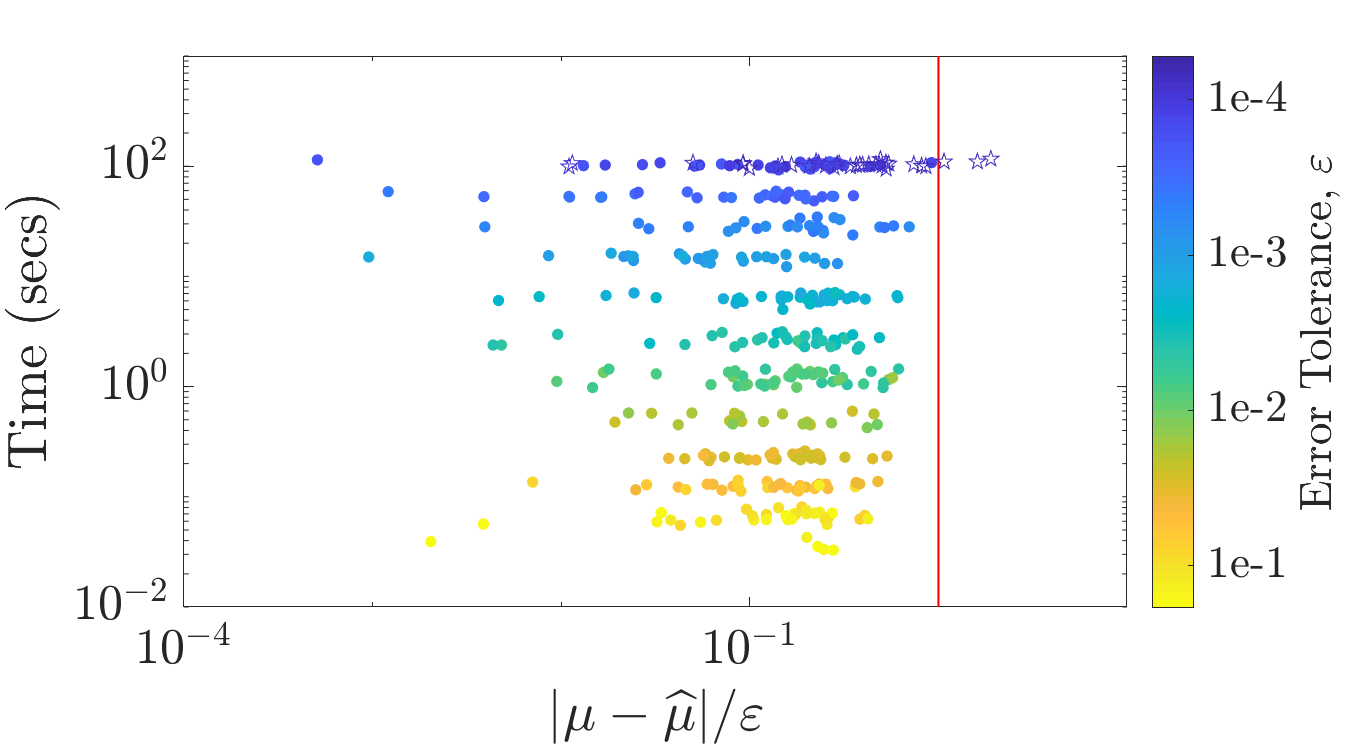}
	\caption[Sobol: Option pricing guaranteed: GCV]{\code{cubBayesNet\_g}: Option pricing using the GCV stopping criterion. The hollow stars indicate the algorithm has not met the error threshold $\epsilon$ even with using maximum $n$.}
	\label{fig:Sobol-optprice-guaranteed-GCV}
\end{figure}

\Section{Discussion}
\JRNote{Move this to end of chapter}

As shown in Figures \ref{fig:mvn-guaranteed-MLE} to \ref{fig:Sobol-optprice-guaranteed-GCV}, both the algorithms computed the integral within user specified threshold most of the time except on a few occasions. This is especially the case with option pricing example due to the complexity and high dimension of the integrand. 
Also notice that the \code{cubBayesLattice\_g} algorithm finished within 10 seconds for Keister and multivariate Gaussian. Option pricing took closer to 70 seconds due to the complexity of the integrand.

Another noticeable aspect from the plots of \code{cubBayesLattice\_g} is how much the error bounds differ from the true error. For option pricing example, the error bound is not as conservative as it is for the multivariate Gaussian and Keister examples. A possible reason is that the latter integrands are significantly smoother than the covariance kernel.  This is a matter for further investigation.

% As shown in Figures \ref{fig:Sobol-mvn-guaranteed-MLE} to \ref{fig:Sobol-optprice-guaranteed-GCV}, the \code{cubBayesNet\_g} algorithm computes the integral within user specified threshold most of the times except on a few occasions. This is especially the case with option pricing example due to the complexity and high dimension of the integrand as mentioned in the previous example. 

Most noticeable aspect from the plots of \code{cubBayesNet\_g} is how closer the error bounds are to the true error. 
This shows that the \code{cubBayesNet\_g}'s estimation of expected error in the stopping criterion is very accurate. 
Similar to \code{cubBayesLattice\_g}, it missed meeting the given error threshold for the option pricing example, as marked by the hollow stars, for $\varepsilon=10^{-4}$. The algorithm reached max allowed number of samples, $n=2^{20}$ due to the complexity of the integrand.

\Section{Comparison with \code{cubMC\_g}, \code{cubLattice\_g} and \code{cubSobol\_g}}

GAIL library provides variety of numerical integration algorithms based on different theoretical foundations, We would like to compare how our algorithms perform relatively to these. We consider three GAIL algorithms 1) \code{cubMC\_g}, a simple Monte-Carlo method for multi-dimensional integration, 2) \code{cubLattice\_g}, a quasi-Monte-Carlo method using Lattice points, and 3) \code{cubSobol\_g}, a quasi-Monte-Carlo method using Sobol points.

\Subsection{Keister integral}
The Table~\ref{tab2} summarizes the performance of the methods MC, Lattice, Sobol,
BayesLat, and BayesSob---which refer to the GAIL cubatures, \texttt{cubMC\_g},
\code{cubLattice\_g}, \code{cubSobol\_g},  \code{cubBayesLattice\_g},  \code{cubBayesNet\_g},
respectively for estimating Keister integral defined in \eqref{eqn:keister_integral}.
We conducted two simulations with $d=3$ and $8$. In the case of $d=3$, all five methods succeeded completely, meaning, the absolute error is less than given tolerance, i.e., $|\mu - \hat{\mu}| \le
\varepsilon$, where $\hat{\mu}$ is a cubature's approximated value. The
fastest method was \code{cubBayesLattice\_g}.
In the case of $d=8$,   \code{cubSobol\_g} achieved 100\% success rate
and was the fastest. But \code{cubBayesLattice\_g}  was competitive and
had the smallest average absolute error. \code{cubBayesNet\_g} used lowest number of samples but was slower than \code{cubSobol\_g}.

\JRNote{avoid Exp notation for 2 decimals}
\begin{table}[ht] % MATLAB Driver: KeisterCubatureExampleBayes.m
	% \arrayrulecolor[HTML]{DB5800}  % Red color stripe
\centering
\caption{Comparison of average performance of cubatures for estimating the Keister integral \eqref{eqn:keister_integral} for $1000$ independent runs. 
These results can be conditionally reproduced with the
script, \allowdisplaybreaks \code{KeisterCubatureExampleBayes.m}, in GAIL. 
\label{tab2}}	   
%\vspace{-2ex}
\begin{tabular}{c}
$
\arraycolsep=1.4pt\def\arraystretch{0.9}
\begin{array}{l@{\quad}r@{\quad}r@{\quad}r@{\quad}r@{\quad}r@{\quad}r}
\hhline{======}
& \multicolumn{4}{c}{d =   3,\ \varepsilon =  0.005} \\ 
 \hline 
 \text{Method} & \text{MC} & \text{Lattice} & \text{Sobol} & \text{BayesLat} & \text{BayesSobol}  \\ 
 \text{Absolute Error} & \num{0.001100} & \num{0.000510} & \num{0.000520}  & \num{0.000430}  & \num{0.000560}  \\ 
 \text{Tolerance Met} & \num{  100} \% & \num{  100} \% & \num{  100} \% & \num{  100} \% & \num{  100} \%  \\
 n & \num{ 2500000} & \num{    4100} & \num{    3900} & \num{    1000} & \num{    1900} \\
 \text{Time (seconds)} & \num{ 0.1800} & \num{ 0.0069} & \num{ 0.0054} & \num{ 0.0029} & \num{ 0.0700} \\ 
 \\ 
& \multicolumn{4}{c}{d =   8,\ \varepsilon =  0.050} \\ 
 \hline 
 \text{Method} & \text{MC} & \text{Lattice} & \text{Sobol} & \text{BayesLat} & \text{BayesSobol}  \\ 
 \text{Absolute Error} & \num{0.012000} & \num{0.015000} & \num{0.007300}  & \num{0.001800}  & \num{0.008300}  \\ 
 \text{Tolerance Met} & \num{  100} \% & \num{   99} \% & \num{  100} \% & \num{  100} \% & \num{  100} \%  \\
 n & \num{ 7400000} & \num{   15000} & \num{   16000} & \num{   66000} & \num{    8200} \\
 \text{Time (seconds)} & \num{ 1.2000} & \num{ 0.0220} & \num{ 0.0160} & \num{ 0.2100} & \num{ 0.3500} \\ 
 \\ 
 \hline 
 
\end{array}
$
\end{tabular}
\end{table}

\Subsection{Multivariate Gaussian}
The Table~\ref{tab3} summarizes the performance of the methods MC, Lattice, Sobol,
BayesLat, and BayesSob for estimating the multi-dimensional Gaussian probability $\bf{X}\sim N(\bf{\mu},\Sigma)$. This experiment demonstrates our algorithm's ability to handle  high-dimensional integral.

We conducted two simulations with different $\Sigma$ and estimation intervals $(\bf{a}, \bf{b})$ but fixed $\mu=0$ and required error threshold, $\varepsilon=10^{-3}$. In the first case, all five methods succeeded completely. 
The fastest method was \code{cubBayesLattice\_g} but \code{cubBayesNet\_g} used the lowest number of samples.
In the second case also, all five methods succeeded,  but \code{cubLattice\_g} was the fastest. 
The \code{cubBayesNet\_g}  was competitive and had the smallest average absolute error using lowest number of samples. The \code{cubBayesLattice\_g} achieved the next lowest average error but was slower than \code{cubSobol\_g}.
%\label{eqn:GaussDef}fGenzdef

\begin{table}[ht] % MATLAB Driver: MVNCubatureExampleBayes.m
	% \arrayrulecolor[HTML]{DB5800} % Red color 
\centering
\caption{Comparison of average performance of cubatures for estimating the $d=20$ Multivariate Normal \eqref{eqn:fGenzdef} for $1000$ independent runs with $\varepsilon=10^{-3}$. These results can be conditionally reproduced with the script, \code{MVNCubatureExampleBayes.m}, in GAIL. 
\label{tab3}}	   
%\vspace{-2ex}
$
\arraycolsep=1.4pt\def\arraystretch{0.9}
\begin{array}{l@{\quad}r@{\quad}r@{\quad}r@{\quad}r@{\quad}r@{\quad}r}
\hhline{======}
& \multicolumn{6}{c}{\Sigma = \mathsf{I}_d, \ \bm{b}=-\bm{a}=(3.5,\cdots,3.5) } \\ 
\hline 
 \text{Method} & \text{MC} & \text{Lattice} & \text{Sobol} & \text{BayesLat} & \text{BayesSobol}  \\
 \text{Absolute Error} & \num{2.20E-16} & \num{2.70e-14} & \num{2.70e-14}  & \num{2.20e-16}  & \num{2.20e-16}  \\
 \text{Tolerance Met} & \num{  100} \% & \num{  100} \% & \num{  100} \% & \num{  100} \% & \num{  100} \%  \\
 n & \num{   10000} & \num{    1000} & \num{    1000} & \num{    1000} & \num{     260} \\
 \text{Time (seconds)} & \num{ 0.0410} & \num{ 0.0820} & \num{ 0.0710} & \num{ 0.0650} & \num{ 0.0790}  
 \\ \\
& \multicolumn{6}{c}{\Sigma = 0.4 \ \mathsf{I}_{d} + \text{0.6} \ \bm{1} \bm{1}^T , \ \bm{a}=(-\infty,\cdots,-\infty), \ \bm{b} = \sqrt{d} (U_1,\cdots,U_d) } \\ \hline 
 \text{Method} & \text{MC} & \text{Lattice} & \text{Sobol} & \text{BayesLat} & \text{BayesSobol}  \\
 \text{Absolute Error} & \num{2.30e-04} & \num{2.10e-04} & \num{4.40e-04}  & \num{1.00e-04}  & \num{4.80e-05}  \\
 \text{Tolerance Met} & \num{  100} \% & \num{  100} \% & \num{  100} \% & \num{  100} \% & \num{  100} \%  \\
 n & \num{   10000} & \num{    1000} & \num{    1000} & \num{    1000} & \num{     260} \\
 \text{Time (seconds)} & \num{ 0.0350} & \num{ 0.0120} & \num{ 0.0140} & \num{ 0.0150} & \num{ 0.0300}  \\
\hline 
 
\end{array}
$
\end{table}

\clearpage

\Section{Shape Parameter Fine-tuning}

\JRNote{ Numerical examples for the case of shape parameter per dimension }

Allowing the kernel shape parameter to vary for each dimension could improve the accuracy of numerical integration when the integrand under consideration has only very low effective dimension  as in the Option Pricing example we demonstrated. We demonstrate this advantage by integrating a function that is not symmetric across dimensions,
\begin{align}
\label{eqn:fresnels}
f(\vx) = \sum_{j=1}^d \upsilon_j \sin(2 \pi x_j^2)
\end{align} 
which has known integral
\begin{align*}
\int_{[0,1)^d} f(\vx)  = \frac{1}{2} \; \code{fresnels}(d) \sum_{j=1}^{d} \upsilon_j
\end{align*}
where \code{fresnels} is the Fresnel Sine integral,
\begin{align*}
\code{fresnels}(z) = \int_{0}^{z} \sin \left( \frac{\pi t^2}{2} \right) \dt.
\end{align*}

\begin{table}[ht] % MATLAB Driver: MVNCubatureExampleBayes.m
	% \arrayrulecolor[HTML]{DB5800}  % Red color stripe
	\centering
	\caption{Comparison of average performance of Bayesian Cubature with common shape parameter vs dimension specific shape parameter
		 for estimating the $d=3$ Fresnel Sine integral. These results can be conditionally reproduced with the script, \code{demoMultiTheta.m}, in GAIL. 
		\label{tabMultiTheta}}	   
	%\vspace{-2ex}
	$
	\arraycolsep=1.4pt\def\arraystretch{0.9}
\begin{array}{l@{\quad}r@{\quad}r@{\quad}r@{\quad}r@{\quad}r@{\quad}r}
\hhline{======}
& \multicolumn{2}{c}{\text{Fresnel Sine Integral in} \; d=3 } \\ 
 \hline 
 \text{Method} & \code{OneTheta} & \code{MultiTheta} \\ 
 \text{Absolute Error} & \num{0.00023} & \num{0.06300}   \\ 
 n & \num{    4100} & \num{     260} \\ 
 \text{Time (seconds)} & \num{ 0.0270} & \num{ 0.0230} \\ 
 \hline 
\end{array}	
	$
\end{table}

The results are summarized from the two different approaches in Table \ref{tabMultiTheta}. The first method, called \code{OneTheta}, uses common shape parameter across all the dimensions, whereas the second method, called \code{MultiTheta}, allows the shape parameters to vary across the dimensions. In the \code{MultiTheta} method, the shape parameter search is multivariate, so the magnitude of shape parameter depends on the integrand's magnitude in each dimension. We have chosen an integrand particularly to demonstrate this aspect \eqref{eqn:fresnels} where we used $d=3$ and the constants $\bm{\upsilon}= (10^{-4}, 1, 10^4)$. The choice of magnitude variations in constants $\bm{\upsilon}$ allows to make the integrand varies significantly across dimensions.

We ran this test for 1000 times. In comparison, both the methods successfully computed the integral all the time but \code{MultiTheta} was slightly faster. The \code{MultiTheta} method used less number of samples but the integration error was bigger than the \code{OneTheta}. For the same number of samples, the \code{OneTheta} method will be much faster since the shape parameter search is faster. The \code{MultiTheta} method is useful in scenarios where we want to use smaller size, $n$, and the integrand varies significantly across dimensions.

\Chapter{Conclusion and Future Work}
\label{sec:conclusion-future-work}

\Section{Conclusion}

We have developed a fast, automatic Bayesian cubature that estimates the high dimensional integral within a user defined error tolerance  that occur in many scientific computing such as finance, machine learning, imaging, etc.  The stopping criteria arise from assuming the integrand to be a Gaussian process.  In \secref{sec:stopping_criteria}, we developed three criteria:  empirical Bayes, full Bayes, and generalized cross-validation.  Empirical-Bayes uses maximum-likelihood to optimally choose the parameters, where posterior of the parameters given the integrand values is maximized. Alternatively, full-Bayes assumes non-informative prior on the parameters and then computes posterior distribution of the integral $\mu$, which leads to a $t$-distribution to obtain the parameters. Generalized cross-validation extends the concept of cross-validation to construct an objective which in turn is maximized.

The computational cost of the automatic Bayesian cubature can be dramatically reduced if the covariance kernel matches the nodes.  We have demonstrated two such matches in practice. The first algorithm was based on rank-1 lattice nodes and shift-invariant kernels where the matrix-vector multiplications can be accomplished using the fast Fourier Transform.  The second algorithm was based on Sobol' points with first order Walsh kernel where the matrix-vector multiplications can be accomplished using the fast Walsh transform. Three integration problems illustrate the performance of our automatic Bayesian cubature algorithms.  

% Optimal kernel order selection for the integrand
For faster computations one could use fixed order kernels in \code{cubBayesLattice\_g}, but for more advanced usage, we have added a kernel variation in \secref{sec:non_integer_kernel_order} that allows one to optimally choose the kernel order without the constraint of being an even integer.

% Cancellation error
During the numerical experiments, we noticed a computation step that causes inaccuracy due to a cancellation error in the estimation of stopping criterion.
We have developed a novel technique in \secref{sec:overcome_cancel_error} to overcome this cancellation error using the inherent structure of the shift-invariant kernel used in our algorithm.

% Gradient
In \secref{sec:deriv_of_kernel}, we have analytically computed the gradient of the objective function and the shift invariant kernel to use with steepest descent in kernel parameters search. 
Quasi-Monte Carlo cubature methods are efficient \cite{SloWoz98} even if the dimension is high given that the effective dimension is low.
To take advantage of low effective dimension, one should not fix the kernel shape parameter across all the dimensions. In this situation, steepest descent methods come in handy as one searches for parameters in multi-dimensions. 

% 

%\Section{Appendix}

%\iffalse
%\Subsection{Properties of Multivariate Gaussian Distributions}

%\fi

\iffalse

% For one-column wide figures use
\begin{figure}
% Use the relevant command to insert your figure file.
% For example, with the graphicx package use
  \includegraphics{example.eps}
% figure caption is below the figure
\caption{Please write your figure caption here}
\label{fig:1}       % Give a unique label
\end{figure}
%
% For two-column wide figures use
\begin{figure*}
% Use the relevant command to insert your figure file.
% For example, with the graphicx package use
  \includegraphics[width=0.75\textwidth]{example.eps}
% figure caption is below the figure
\caption{Please write your figure caption here}
\label{fig:2}       % Give a unique label
\end{figure*}
%
% For tables use
\begin{table}
% table caption is above the table
\caption{Please write your table caption here}
\label{tab:1}       % Give a unique label
% For LaTeX tables use
\begin{tabular}{lll}
\hline\noalign{\smallskip}
first & second & third  \\
\noalign{\smallskip}\hline\noalign{\smallskip}
number & number & number \\
number & number & number \\
\noalign{\smallskip}\hline
\end{tabular}
\end{table}

\fi

\Section{Future Work}
%\label{sec:futureWork}

We demonstrated the capability of our new Bayesian cubature algorithms to successfully compute the integrals faster within the user defined error tolerances. But there are possibilities for improvements and new areas of applications.
Some of the improvement ideas are listed here:
\begin{itemize}
\item Higher order digital sequences and digital shift invariant kernels \cite{Nuyens2013} \cite{Bald12a}:
We could improve the computation speed of \code{cubBayesNet\_g} for smoother integrands using higher order digital sequences and matching kernels, which have the potential of being another match that satisfies the conditions in Section~\ref{sec:fast_BC}.  The fast Bayesian transform would correspond to a fast Walsh transform similar to the second algorithm we demonstrated.  For such kernels and the first order Walsh kernel we demonstrated, periodicity is not assumed, however, special structure of both the sequences and the kernels are required to take advantage of integrand smoothness.

\item Control variates:
Hickernell et.al \cite{HicEtal17a} \cite{Li16a} adapted control variates for Quasi-Monte Carlo. 
Control variates are commonly used to improve the efficiency of IID Monte Carlo integration.
One should be able to adapt our Bayesian cubature to control variates, i.e., assuming  
\begin{equation*}
f = \mathcal{GP} \left( \beta_0 + \beta_1 \, g_1 + \cdots + \beta_p \, g_p, \;s^2 C \right),
\end{equation*}
for some choice of vector of functions $\vg = \{g_1, \ldots, g_p\}$, where $\vg : [0,1)^d \to \reals^p$ whose integrals are known $\mu_{\vg} := \int_{[0,1)^d} \vg(\vx)\dvx$, and some parameters $\beta_0, \ldots, \beta_p$ in addition to the $s$ and $C$, then
\begin{align*}
\mu :=
\int_{[0,1)^d} f(\vx) \dx =
\int_{[0,1)^d} h_{\beta} (\vx) \dvx,  \; \text{where} \;
h_{\beta}(\vx) := f(\vx) + \vbeta^T
(\mu_{\vg} - \vg(\vx)).
\end{align*}
Here $\vg$ are the functions on which the QMC method does a good job of integrating it without error. 
The goal is to choose an optimal $\vbeta$ to make
\begin{align*}
\widehat{\mu}_{\vbeta,n} :=
\frac 1n \sum_{i=0}^{n-1} h_{\vbeta} (\vx_i)
\end{align*}
sufficiently close to $\mu$ with the least expense, $n$, possible.
The efficacy of this approach has not yet been explored.

\item Steepest descent: 
The kernels's optimal shape parameter searched using steepest descent with kernels gradient could sometime get into local minima. This needs more understanding and enhancements.
\JRNote{explain why, any suggestions?}

\item Gaussian diagnosis: We assumed the integrand to be an instance of a Gaussian process. One could attempt to prove if this is a good assumption using statistical diagnosis for goodness of fit.
% based on the results we have.

\item Parallel Algorithm: 
For more demanding high performance computing applications, where the precision requirements are high, our algorithms will try to use large number samples leading to longer computation time. One approach to overcome this constraint is to use Parallel computing techniques to speed up the algorithm. Most time consuming parts of our algorithm are shape parameter search and fast Bayesian transform computation. Fast Fourier transform (FFT) and Fast Walsh transform are easily amenable to parallelization. There exist plenty of prior work that can be adapted to work with our algorithms. We use radix-2 FFT. One could use a higher radix FFT to make the computations faster. 

Another area of improvement is the parameter search. We explored the steepest descent algorithm but the speedup was not significant. One could explore higher order algorithms such as Newton method,  which could find the minima faster. 
Fast Bayesian transforms are repeatedly computed in every step of the parameter search if it can be avoided by interpolation or other techniques, this could significantly speedup the algorithm.

One could also use GPU to run the whole code of our Bayesian Cubature algorithms or just the FFT/FWHT part to get a easier speedup.

\end{itemize}

\clearpage

\appendix

% BibTeX users please use one of
%\bibliographystyle{spbasic}      % basic style, author-year citations
%\bibliographystyle{spmpsci}      % mathematics and physical sciences
%\bibliographystyle{spphys}       % APS-like style for physics
%\bibliography{}   % name your BibTeX data base
\bibliographystyle{IEEEtran}  %%%{apacite}
\bibliography{FJHown23,FJH23}

\end{document}